\documentclass[12pt]{article}

\usepackage{graphicx}
\usepackage{latexsym,amssymb}
\usepackage{amsthm}
\usepackage{indentfirst}
\usepackage{amsmath}
\usepackage{color}
\usepackage{xcolor}
\usepackage{fourier}
\usepackage[colorlinks=true,backref=page]{hyperref}
\usepackage[all]{xy}

\usepackage[a4paper,text={160true mm,230true mm},centering,top=35true mm,head=5true mm,headsep=2.5true mm,foot=8.5true mm]{geometry}

\newtheorem{theoremalph}{Theorem}

\newtheorem*{Main Theorem}{Main Theorem}

\newtheorem{Theorem}{Theorem}[section]
\newtheorem*{Theorem A}{Theorem A}
\newtheorem*{Theorem A'}{Theorem A'}
\newtheorem*{Theorem B'}{Theorem B'}

\newtheorem{Definition}[Theorem]{Definition}
\newtheorem{Proposition}[Theorem]{Proposition}
\newtheorem{Lemma}[Theorem]{Lemma}

\newtheorem{Remark}[Theorem]{Remark}

\newtheorem{Remark-numbered}[Theorem]{Remark}
\newtheorem{Corollary}[Theorem]{Corollary}

\newtheorem*{Claim}{Claim}
\newtheorem{Claim-numbered}[Theorem]{Claim}

 \def\NN{{\mathbb N}} 
\def\PP{{\mathbb P}}
 \def\RR{{\mathbb R}}

 \def\ZZ{{\mathbb Z}}

\begin{document}

\title{Strong Positive recurrence for potential and exponential mixing of equilibrium states of surface diffeomorphisms}
	
\author{Chiyi Luo and Dawei Yang\footnote{
	D. Yang  was partially supported by National Key R\&D Program of China (2022YFA1005801), NSFC 12325106, ZXL2024386 and Jiangsu Specially Appointed Professorship. 
	C. Luo was partially supported by NSFC 12501244.}}
\date{}    
	
\maketitle
		
\begin{abstract}
In this paper, we study the strong positive recurrence property for a large class of potentials of $C^{\infty}$ surface diffeomorphisms with positive entropy.
We establish several statistical properties of the corresponding equilibrium states, including exponential decay of correlations and effective intrinsic ergodicity.
\end{abstract}
	   
 \tableofcontents

 \section{Introduction}\label{SEC: Introduction} 
 \subsection{Strong positive recurrence for potentials}
 Let $f: M\rightarrow M$ be a diffeomorphism on a compact, boundaryless $C^{\infty}$ Riemannian manifold $M$. 
 Let $\mathcal{M}(f)$ denote the set of all $f$-invariant probability measures, and let $\mathcal{M}_{e}(f)$ denote the set of all ergodic $f$-invariant probability measures.
 For every continuous function $\phi: M\rightarrow \mathbb{R}$, by the variantional principle, the topological pressure $P(\phi)$ is given by
 $$P(\phi):=P(\phi,f)= \sup \Big\{h_{\mu}(f)+\int \phi ~{\rm d}\mu:~ \mu\in  \mathcal{M}(f) \Big\}.$$
 For each $\mu\in  \mathcal{M}(f)$, we denote $P_{\mu}(\phi):=P_{\mu}(\phi,f)=h_{\mu}(f)+\int \phi ~{\rm d}\mu$ the \textit{metric pressure} of $\mu$. 
 A measure $\mu \in \mathcal{M}(f)$ is called an \textit{equilibrium state} of the potential  $\phi$ for $f$ if $P_{\mu}(\phi)=P(\phi)$.
 The equilibrium states of the potential  $\phi \equiv 0$ is called the \textit{measures of maximal entropy}.
 Note that if $\mu$ is an equilibrium state of the potential  $\phi$, then almost every ergodic component of $\mu$ is itself an ergodic equilibrium state of $\phi$.
 
 As described in \cite{Rue78B} and \cite{Sar09}, equilibrium measures constitute one of the most important classes of invariant measures, providing fundamental information for understanding the dynamical behavior of the system. When we consider the dynamics of flows, the study of some important invariant measures can also be transformed into the study of equilibrium states of the Poincaré map. For example, the study of the measure of maximal entropy of a flow is converted into the study of the equilibrium state of the Poincaré map with respect to the roof function. From this perspective, the study of equilibrium states should become an important subject in the field of dynamical systems. We believe that the main results of this paper can play a certain role in the study of the measure of maximal entropy of flows.
 
 For a topologically mixing Anosov diffeomorphism (or more generally an Axiom A diffeomorphism) on a compact manifold, Sinai \cite{Sin72}, Ruelle \cite{Rue76},  Bowen \cite{Bow07} proved that every H\"{o}lder continuous function admits a unique equilibrium state, and this measure is a Gibbs measure and isomorphic to a Bernoulli scheme.
 The proof is based on the construction of a coding map from a mixing subshift of finite type onto the manifold.
 Moreover, based on the statistical properties of subshifts of finite type, the equilibrium state is exponentially mixing \cite{Rue78B}, satisfies large deviation principles \cite{Kif90}, and effective intrinsic ergodicity holds \cite{Kad15}.

Beyond uniform hyperbolicity, equilibrium states do not necessarily exist. 
For instance, Misiurewicz \cite{Mis73} and Buzzi \cite{Buz14} gave examples showing that equilibrium states of the zero potential may fail to exist for $C^{r}$ diffeomorphisms. 
However, by a result of Newhouse \cite{New89}, based on the Gromov-Yomdin theory \cite{Gro87,Yom87}, the entropy map $\mu \mapsto h_{\mu}(f)$ is upper semi-continuous. 
Consequently, equilibrium states of any continuous potential $\phi$ always exist for $C^{\infty}$ diffeomorphisms.

For $C^{\infty}$ surface diffeomorphisms, although the existence of equilibrium states is known \cite{New89}, and their finiteness for certain classes of potentials has been established recently by the remarkable result of Buzzi-Crovisier-Sarig \cite{BCS22}, their statistical properties remain largely unknown.
Recently, outstanding work by Buzzi-Crovisier-Sarig \cite{BCS25} focused on statistical properties of measures of maximal entropy for surface diffeomorphisms. 
By establishing the mechanism of strong positive recurrence (SPR for short) for measures of maximal entropy for surface diffeomorphisms, they proved the exponential decay of correlations and several other important statistical properties for measures of maximal entropy.

In this paper, we study the strong positive recurrence (SPR) property for a large class of potentials whose expressions are similar to those considered in \cite[Corollary 1.4]{BCS22} and establish statistical properties for surface diffeomorphisms.
For instance, for H\"older potentials and for mixing non-uniformly hyperbolic surface diffeomorphisms, we obtain the following result on the exponential decay of correlations (the $\beta$-norm $\|\cdot\|_{\beta}$ is defined in \eqref{eq:holdernorm}).

\begin{Theorem}\label{Thm:T-ET}
	Let  $f: M\rightarrow M$ be a topologically mixing $C^{\infty}$ surface diffeomorphism with positive topological entropy.  
	For every H\"{o}lder continuous function $\phi: M \rightarrow \mathbb{R}$ with $\sup\phi- \inf \phi < h_{\rm top}(f)$, denote by $\mu$ the unique equilibrium state of $\phi$. 
	Then for each $\beta>0$, there exist $C>0$ and $0<\theta<1$ such that for every $\beta$-H\"{o}lder continuous functions $V_1,V_2:M\rightarrow \mathbb{R}$, one has
	$$\left| \int V_1(x) \cdot V_2(f^{n}(x))~{\rm d}\mu(x)- \int V_1~{\rm d}\mu  \cdot \int V_2~{\rm d}\mu \right|\leq C\cdot \theta^n \cdot \|V_1\|_{\beta} \cdot \|V_2\|_{\beta},~\forall n>0.$$
\end{Theorem}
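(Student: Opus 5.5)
The plan is to follow the strategy of Buzzi--Crovisier--Sarig for measures of maximal entropy, replacing the zero potential by $\phi$. The argument reduces exponential mixing to strong positive recurrence (SPR) of the lifted potential on a countable-state Markov shift, in three stages.

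\textbf{Step 1 (entropy gap and uniqueness).} First, the condition $\sup\phi-\inf\phi<h_{\rm top}(f)$ gives
$$P(\phi)\ \ge\ h_{\rm top}(f)+\inf\phi\ >\ \sup\phi .$$
Hence every equilibrium state $\mu$ has $h_\mu(f)\ge P(\phi)-\sup\phi>0$, so it is hyperbolic of saddle type by Ruelle's inequality on surfaces. More quantitatively, any measure whose pressure is close to $P(\phi)$ has entropy bounded below by a fixed $h_0>0$. Uniqueness of the equilibrium state under topological mixing I would take from Buzzi--Crovisier--Sarig \cite{BCS22}: equilibrium states with positive entropy are finite in number, each is carried by a homoclinic class, and topological mixing forces a single class with a single measure.

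\textbf{Step 2 (SPR for $\phi$).} This is the main obstacle. I would show that $\phi$ is SPR in the sense of the paper's definition: there are $h_0>0$ and $\chi>0$ such that, for every $\varepsilon>0$, one can find a Pesin block $\Lambda$ with $\nu(\Lambda)>1-\varepsilon$ for every ergodic $\nu$ satisfying $P_\nu(\phi)>P(\phi)-h_0$. The approach is to adapt the ``entropy-expansion'' argument of \cite{BCS25}.

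That argument uses the Yomdin--Burguet reparametrization lemma for $C^\infty$ curves. It shows that a sequence of ergodic measures whose pressures tend to $P(\phi)$ cannot have asymptotically small Lyapunov exponents, and cannot lose mass to the non-uniformly hyperbolic ``bad'' set. The analogue for a potential bounds the $\phi$-weighted count of reparametrizing curves. The count with large geometric contraction contributes at most about $\sup\phi+\limsup(\text{entropy defect})$, and this is strictly less than $P(\phi)$ by Step 1. The substitute for the entropy gap is thus the pressure gap $P(\phi)-\sup\phi>0$.

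The delicate point is making the estimate uniform in $\phi$-weights. For this I would use Hölder continuity of $\phi$ to control Birkhoff sums along pieces of Bowen balls.

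\textbf{Step 3 (coding and exponential mixing).} Using Sarig's (or Ben Ovadia's) symbolic extension, I would lift to an irreducible countable-state Markov shift $\Sigma$. On $\Sigma$ the lifted potential $\Phi=\phi\circ\pi$ is locally Hölder, and SPR on $M$ transfers to Sarig's SPR for $\Phi$ on $\Sigma$, since the coding captures all measures charging the large Pesin block. Topological mixing of $f$ together with the homoclinic-class description gives aperiodicity of the relevant component. Sarig's theory (or a Young tower built from a first return to a cylinder with exponential tails) then yields exponential mixing of the lifted Gibbs measure for Hölder observables on $\Sigma$.

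Finally, I would push this down to $\beta$-Hölder $V_1,V_2$ on $M$. The composition $V_i\circ\pi$ is Hölder in the symbolic metric only on tails controlled by Pesin parameters, so I would truncate at a Pesin level $\ell$. The truncation error is bounded by the exponential tail $\mu(\text{level}>\ell)$, which is itself exponentially small by the SPR estimate. Choosing $\ell\sim cn$ gives the bound $C\theta^n\|V_1\|_\beta\|V_2\|_\beta$.
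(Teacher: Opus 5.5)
\textbf{The failing step: the last paragraph of Step 3.}
Its premise is false. The coding map $\pi:\Sigma\to M$ is H\"older continuous on \emph{all} of $\Sigma$ for the metric $d(\bar x,\bar y)=e^{-\min\{|n|:x_n\neq y_n\}}$. This is part of Sarig's coding theorem, and it is stated in Theorem~\ref{Thm:SPRcoding}. So $V_i\circ\pi$ is a genuine $\hat\beta$-H\"older function with $\|V_i\circ\pi\|_{\hat\beta}\le C\|V_i\|_\beta$ (Lemma~\ref{Lem:Lem3.9}). Exponential mixing of the SPR shift then applies to it directly, with no truncation; this is exactly how the paper argues.

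The truncation you propose is also unsupported, for three reasons:
\begin{itemize}
\item SPR only says $\lim_{\ell}\liminf_n\mu_n({\rm PES}^{\chi,\varepsilon}_\ell(f))=1$ and carries no rate. It therefore cannot make the mass above level $\ell$ exponentially small.
\item The exponential-tail estimate that does exist (Theorem~\ref{Thm:  Exponential Tails}) concerns the hitting time of a \emph{fixed} Pesin block, not the Pesin level. It is itself obtained from the SPR coding.
\item You would also need a bound, in terms of $\ell$, on the H\"older norm of the truncated observable, and you give none.
\end{itemize}
Replace the whole truncation by the H\"older continuity of $\pi$.

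\textbf{Your formulation of SPR in Step 2.}
You fix $h_0$ before $\varepsilon$. That is strictly stronger than the definition, where the pressure threshold may depend on the mass tolerance ($\forall\tau\,\exists\ell\,\exists P_0$). It fails in general, for instance when ergodic measures with pressure slightly below $P(\phi)$ keep a fixed small fraction of their mass in long excursions near a non-hyperbolic periodic orbit.

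\textbf{The mechanism of Step 2 is right.}
It is the paper's argument in Theorem~\ref{Thm: ErgodicCr}. In Burguet's decomposition $\hat\mu_n^{+}\to\beta\hat\nu_1^{+}+(1-\beta)\hat\nu_0$ one gets $P(\phi)\le\beta P(\phi)+(1-\beta)(\int\phi\,d\nu_0+C)$. For $r=\infty$, $C>0$ can be taken as small as you like, so $\int\phi\,d\nu_0+C\le\sup\phi+C<P(\phi)$. This forces $\beta=1$, hence $\lambda^+(\mu_n)\to\lambda^+(\mu)$.

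No $\phi$-weighted reparametrization and no H\"older control of Birkhoff sums is needed. The entropy inequality is used as a black box, and $\int\phi$ enters linearly.

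For $C^\infty$ the paper instead first proves that the limit is ergodic, via finiteness of topological homoclinic classes, and then quotes \cite{BCS22I}. Your route bypasses ergodicity of the limit. That is fine, because every ergodic component of the limit is an equilibrium state with entropy, hence exponents, at least $h_{\rm top}(f)-{\rm Var}(\phi)$.

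\textbf{Mixing versus aperiodicity of the coding.}
Topological mixing gives mixing order $p=1$ for $\mu$, not aperiodicity of the coding shift, whose period $q$ may exceed $1$. You must check that each $\sigma^q$-ergodic component of $\hat\mu$ projects to $\mu$. Then write $n=qk_n+r_n$ with $0\le r_n<q$, and apply the $\sigma^q$-estimate to $V_1$ and $V_2\circ f^{r_n}$. The $\beta$-norm of $V_2\circ f^{r_n}$ is at most a $q$-dependent constant times $\|V_2\|_\beta$.
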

Note that the uniqueness follows from \cite[Corollary 1.4]{BCS22}, and \cite[Theorem1.1]{BCS25} establishes exponential mixing for the measure of maximal entropy, corresponding to the case $\phi \equiv 0$.
We now recall the definition of strong positive recurrence (SPR) for potentials, as introduced by Buzzi-Crovisier-Sarig \cite{BCS25}.
Let  $f: M\rightarrow M$ be a $C^{1+\alpha}$ diffeomorphism on a compact manifold $M$.
We first recall the definition of Pesin sets.
\begin{Definition}
	For $\chi>0$, $0<\varepsilon \ll \chi$ and $\ell\geq 1$, denote by ${\rm PES}_{\ell}^{\chi,\varepsilon}(f)$ the set of points $x$ with a $Df$-invariant splitting $T_{{\rm Orb}(x)}M=E^u\oplus E^s$ such that for every $k\in \mathbb Z$
	\begin{itemize}
		\item $\|D_{f^k(x)}f^{-n}|_{E^u(f^k(x))}\| \leq \ell e^{|k|\varepsilon}e^{-(\chi-\varepsilon)n},~\|D_{f^k(x)} f^{n}|_{E^s(f^k(x))}\|\leq \ell e^{|k|\varepsilon}e^{-(\chi-\varepsilon)n},~\forall n>0$;
		\item $\sin \angle (E^{u} (f^k(x)), E^{s} (f^k(x)))\geq \ell^{-1}e^{-|k|\varepsilon}$.
	\end{itemize}
\end{Definition}
By the Pesin theory \cite{Pesin76,Pesin77,Pesin772}, for every ergodic hyperbolic measure $\mu$ of $f$, if all the Lyapunov exponents of $\mu$ lie outside the interval $[-\chi,\chi]$, then $\mu( \bigcup_{\ell>0} {\rm PES}_{\ell}^{\chi,\varepsilon}(f) )=1$.
For each continuous function $\phi: M \rightarrow \mathbb{R}$, we define 
$$ {\rm Var}( \phi ):= \sup\big\{|\phi(x)-\phi(y)|:~x\in M,~y\in M \big\}.$$   

\begin{Definition}
	Let  $f: M\rightarrow M$ be a diffeomorphism on a compact manifold $M$, and let $\phi: M \rightarrow \mathbb{R}$ be a continuous function. 
	For each $\chi>0$, we call that $f$ is $\chi$-SPR for $\phi$, if for every $\varepsilon>0$ and every sequence of ergodic measures $\{\mu_n\}_{n>0}$ with $P_{\mu_n}(\phi) \rightarrow P(\phi)$, one has 
	$$  \lim_{\ell \rightarrow \infty}~ \liminf_{n\rightarrow \infty}~ 
	      \mu_n({\rm PES}_{\ell}^{\chi,\varepsilon}(f))=1.$$   
\end{Definition}

\begin{theoremalph}\label{Thm: A}
	Let  $f: M\rightarrow M$ be a $C^{\infty}$ surface diffeomorphism with $h_{\rm top}(f)>0$. 
	For every H\"{o}lder continuous function $\phi: M \rightarrow \mathbb{R}$ with ${\rm Var}( \phi )<h_{\rm top}(f)$, there exists $\chi>0$ such that $f$ is $\chi$-SPR for $\phi$.
\end{theoremalph}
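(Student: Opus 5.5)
We will follow the strategy of Buzzi--Crovisier--Sarig \cite{BCS25} for measures of maximal entropy, replacing entropy by metric pressure throughout. The hypothesis ${\rm Var}(\phi)<h_{\rm top}(f)$ forces near-equilibrium measures to have entropy bounded away from zero. Indeed, if $P_{\mu}(\phi)\geq P(\phi)-\delta$, then since $P(\phi)\geq h_{\rm top}(f)+\inf\phi$, we get
$$h_\mu(f)\geq h_{\rm top}(f)-{\rm Var}(\phi)-\delta=:h_0-\delta>0.$$
By Ruelle's inequality, such ergodic measures are hyperbolic of saddle type, with exponents $\lambda^+(\mu)\geq h_\mu(f)$ and $\lambda^-(\mu)\leq -h_\mu(f)$. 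So every exponent lies outside $[-\chi_0,\chi_0]$ for any $\chi_0<h_0$. We fix $\chi$ strictly smaller than such a bound; the precise choice is dictated below.

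\textbf{Reduction.} We argue by contradiction. Suppose $\mu_n$ are ergodic with $P_{\mu_n}(\phi)\to P(\phi)$, but $\mu_n({\rm PES}^{\chi,\varepsilon}_\ell)\leq 1-\tau$ for arbitrarily large $\ell$ along a subsequence. Passing to a weak-$*$ limit $\mu$, Newhouse's upper semicontinuity of entropy \cite{New89} and continuity of $\phi$ show that $\mu$ is an equilibrium state. Hence a.e.\ ergodic component of $\mu$ has entropy at least $h_0$, and its exponents lie outside $[-h_0,h_0]$. The danger is that exponents drop in the limit (loss of hyperbolicity at the limit), or that mass escapes from Pesin blocks even though the exponents of the $\mu_n$ stay large.

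\textbf{Key step: semicontinuity of exponents with entropy defect.} We will use the Buzzi--Crovisier--Sarig estimate from the $C^\infty$ surface setting, built on Yomdin reparametrizations as in \cite{BCS22}. For ergodic $\mu_n\to\mu$ with $\lambda^+(\mu_n)$ bounded below, write $\mu=(1-\beta)\mu_0+\beta\mu_1$ with $\lambda^+=0$ $\mu_0$-a.e. The estimate states
$$\limsup h_{\mu_n}(f)\leq \beta h_{\mu_1}(f)+(1-\beta)\frac{\lambda^+}{r}\Big|_{r=\infty}=\beta h_{\mu_1}(f).$$
It also gives the finer statement that the ``neutral'' part of the $\mu_n$ carries no entropy in the limit. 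We combine this with the pressure analogue: $\limsup P_{\mu_n}(\phi)\leq \beta P_{\mu_1}(\phi)+(1-\beta)\int\phi\,d\mu_0$. If $\beta<1$, this is strictly less than $P(\phi)$, because $\int\phi\,d\mu_0\leq \sup\phi < P(\phi)$; the last inequality holds since $P(\phi)\geq h_{\rm top}+\inf\phi>\sup\phi$. This is exactly where ${\rm Var}(\phi)<h_{\rm top}$ is used. Hence $\beta=1$, so no mass becomes neutral.

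\textbf{Main obstacle: passing to Pesin blocks.} The BCS argument shows that for $\mu_n$ with entropy near maximal, the proportion of time orbits spend where the ``unstable/stable curve'' dynamics fail to give uniform contraction at scale $\ell$ is controlled by the entropy loss. We must redo this with pressure: a neutral or bad-time block contributes at most $\sup\phi$ per iterate to the pressure, rather than the roughly $h_0+\inf\phi$ obtained elsewhere. Uniformly good times are in the sense of Pliss/hyperbolic times, with constant $\ell$ for both $f$ and $f^{-1}$ and angle control. Their frequency tends to $1$ as $\ell\to\infty$, uniformly in $n$. The gap $P(\phi)-\sup\phi>0$ then bounds the density of bad times. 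We expect this quantitative pressure version of the BCS ``entropy of neutral blocks'' lemma, run for both $f$ and $f^{-1}$ with the angle handled via two-sided hyperbolic times, to be the technical core. We will first try to transcribe BCS's $C^\infty$ Yomdin argument, adding the Birkhoff sum of $\phi$ as a bounded weight. Since $\phi$ is continuous, the Hölder property is not needed in this step.
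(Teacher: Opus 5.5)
Your first two steps are sound and essentially reproduce the paper.

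\begin{itemize}
\item The entropy lower bound is Lemma~\ref{Lem: EH}.
\item Your use of the Buzzi--Crovisier--Sarig decomposition $\mu=(1-\beta)\mu_0+\beta\mu_1$, forcing $\beta=1$ through $\int\phi\,d\mu_0\le\sup\phi<P(\phi)$, is exactly the argument in the proof of Theorem~\ref{Thm: ErgodicCr}. The paper runs it with Burguet's $C^r$ version \cite{Bur24P}; the $C^\infty$ case is the limit where the constant $C$ there tends to $0$.
\end{itemize}

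You should state the output of this step explicitly. Since $\beta=1$, $\lambda^+(\mu_n)\to\lambda^+(\mu)$, either because $\lim\lambda^+(\mu_n)=\beta\lambda^+(\mu_1)$ or via the lifts to $\PP TM$. Applying the same argument to $f^{-1}$ gives $\lambda^-(\mu_n)\to\lambda^-(\mu)$.

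For $C^\infty$, the paper takes a detour you avoid. It first proves that $\mu$ is ergodic (Theorem~\ref{Thm: Ergodic}), using finiteness of topological homoclinic classes and uniqueness of hyperbolic equilibrium states on a Borel homoclinic class. Only then does it quote \cite[Theorem A]{BCS22I} for continuity of exponents. Your direct use of the decomposition gets continuity without needing ergodicity first.

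The gap is the final step, the passage to Pesin blocks. You leave it as an unexecuted programme: a pressure-weighted rerun of Yomdin and neutral-block estimates with two-sided hyperbolic times, which you expect to be the technical core. Nothing is proved there.
\begin{itemize}
\item The claim that the gap $P(\phi)-\sup\phi$ bounds the density of bad times is asserted, not derived.
\item Even a density bound would still have to be converted into uniform-in-$n$ control of the tempered constants and angles in the definition of ${\rm PES}_{\ell}^{\chi,\varepsilon}(f)$.
\end{itemize}

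More importantly, you are missing the idea that no further entropy or pressure input is needed. You already have three facts:
\begin{itemize}
\item $\mu_n\to\mu$;
\item $\lambda^{\pm}(\mu_n)\to\lambda^{\pm}(\mu)$;
\item $\mu$-a.e.\ point has exponents outside $[-\chi,\chi]$, since a.e.\ ergodic component of $\mu$ is an equilibrium state with entropy at least $h_0$.
\end{itemize}
From these, $\lim_\ell\liminf_n\mu_n({\rm PES}_{\ell}^{\chi,\varepsilon}(f))=1$ follows from the entropy-free argument of \cite[Theorem 3.1]{BCS25} (see \cite[Remark 3.5]{BCS25}). Roughly, continuity of the exponents forces the lifts of $\mu_n$ to $E^u$ (resp.\ $E^s$) in $\PP TM$ to converge to the unstable (resp.\ stable) lift of $\mu$. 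Any other invariant lift over a hyperbolic $\mu$ would mix in $E^s$ and lower the integral of $\log\|Df|_E\|$. From this one gets uniform finite-time expansion, contraction and angle control. This is precisely how the paper concludes in the proof of Theorem~\ref{Thm: SPR}.

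So $\phi$ enters only through the inequality that forces $\beta=1$; the reweighting of Yomdin estimates by Birkhoff sums of $\phi$ is unnecessary. The paper also has ergodicity of $\mu$ in hand when invoking \cite{BCS25}. To match that exactly, either import Theorem~\ref{Thm: Ergodic}, or check that the argument only uses the a.e.\ exponent bound you already have.
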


Note that in \cite[Theorem G]{BCS25}, it is shown that if  ${\rm Var}(\phi)$ is sufficiently small, then $f$ is $\chi$-SPR for $\phi$ (also see Remark \ref{Rem:SPRpro}). 
However, no explicit bound on how small ${\rm var}(\phi)$ must be is provided. Theorem~\ref{Thm: A} gives a more precise quantitative bound.
To compare with \cite[Theorem A]{BCS25} at the technical level, we establish the SPR property for a class of equilibrium states, using another route:
\begin{itemize}
\item 
For the proof of Theorem~\ref{Thm: A}, we proceed in three steps.
First, we show, by a geometric argument\footnote{We will work with the topological homoclinic class and quadrilaterals, invoking in particular dynamical Sard’s lemma. However, the quadrilateral argument itself will not be showed here, since the required result may be taken directly from Buzzi-Crovisier-Sarig \cite{BCS22}.}, that the limit of any sequence of ergodic measures whose metric pressures converge to the topological pressure is itself ergodic.
Second, we have the continuity of Lyapunov exponents follows from \cite{BCS22I}.
Finally, by applying the results of \cite{BCS25}, we prove the SPR property for the potential.

\item 
In \cite{BCS25}, they first establish the SPR property for MMEs using the continuity of Lyapunov exponents for any sequence of ergodic measures whose metric entropies converge to the topological entropy.
Second, they lift all measures whose metric entropies are close to the topological entropy to finitely many SPR countable Markov shifts.
Finally, using the SPR property of these Markov shifts, they prove that the limit of any sequence of ergodic measures whose metric entropies converge to the topological entropy must itself be ergodic.
\end{itemize}

However, we believe that the first step in \cite{BCS25} is also valid in our setting by understanding their arguments deeply. 
Moreover, it can also be applied to the $C^r$ case with large entropy (see Theorem~\ref{Thm: ErgodicCr}).

Recall the Oseledec theorem, for every $f$-invariant measure $\mu$, the Lyapunov exponents 
\begin{align*}
	\lambda_1(x):= \lim_{n\rightarrow  +\infty} \frac{1}{n} \log \|D_xf^n\|,~~\lambda_2(x):= \lim_{n\rightarrow  +\infty} -\frac{1}{n} \log \|D_xf^{-n}\|, 
\end{align*}
are well-defined for $\mu$-almost every $x$, and we denote $\lambda^{+}(\mu):=\int \lambda_1(x)~{\rm d}\mu(x)$. 
The next theorem is the counterpart of our first step.
\begin{theoremalph}\label{Thm: Erg}
     Let $f:M\rightarrow M$ be a $C^{\infty}$ surface diffeomorphism with $h_{\rm top}(f)>0$.
     Suppose that $\phi$ is a H\"{o}lder continuous function with ${\rm Var}( \phi )< h_{\rm top}(f)$. 
     Then, for any sequence of ergodic measures $\{\mu_n\}_{n>0}$ with $P_{\mu_n}(\phi) \rightarrow P(\phi)$ and $\mu_n \rightarrow \mu$, one has that $\mu$ is ergodic. Moreover, we have $\lambda^{+}(\mu_n) \to \lambda^{+}(\mu)$.
\end{theoremalph}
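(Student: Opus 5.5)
We first show that the limit $\mu$ is itself an equilibrium state, and then that its ergodic components cannot be split apart. Since $f$ is $C^\infty$, Newhouse's theorem gives upper semicontinuity of $\nu\mapsto h_\nu(f)$. Together with continuity of $\nu\mapsto\int\phi\,d\nu$, this yields
$$P_\mu(\phi)\ge\limsup_n P_{\mu_n}(\phi)=P(\phi).$$
Hence $\mu$ is an equilibrium state, and almost every ergodic component of $\mu$ is an ergodic equilibrium state of $\phi$. The assumption ${\rm Var}(\phi)<h_{\rm top}(f)$ forces every ergodic equilibrium state $\nu$ to satisfy
$$h_\nu(f)\ge P(\phi)-\sup\phi\ge h_{\rm top}(f)+\inf\phi-\sup\phi>0.$$
By Ruelle's inequality $\nu$ is then hyperbolic of saddle type, with $\lambda^+(\nu)\ge h_\nu(f)>0$. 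The same lower bound $h_{\mu_n}(f)\ge c>0$ holds for $\mu_n$ once $n$ is large.

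Next we localize everything in homoclinic classes. By the Buzzi--Crovisier--Sarig theory \cite{BCS22}, each ergodic hyperbolic measure of positive entropy is carried by the homoclinic class of some hyperbolic periodic orbit. For Hölder $\phi$ with ${\rm Var}(\phi)<h_{\rm top}(f)$, each such class carries at most one equilibrium state, and there are only finitely many classes carrying ergodic equilibrium states; this is the finiteness/uniqueness of \cite[Corollary 1.4]{BCS22}. So the ergodic components of $\mu$ lie among finitely many measures $\nu_1,\dots,\nu_k$, attached to distinct homoclinic classes $H(O_1),\dots,H(O_k)$.

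The key geometric step is to show that, for large $n$, $\mu_n$ is homoclinically related to some $O_i$, and to the same $O_i$ for all components. We plan to use Pesin blocks with entropy bounded below. The quantity $\lambda^+(\mu_n)$ is bounded below by $c$ and above by $\log\|Df\|$, so uniform-size stable and unstable Pesin manifolds exist on sets of $\mu_n$-measure bounded below. Each component $\nu_i$ with $\mu(\nu_i)>0$ charges such a region near its typical points. Weak convergence then gives $\mu_n$-typical points close to $\nu_i$-typical points. The task is to upgrade this closeness to transverse intersections of unstable manifolds of $\mu_n$ with stable manifolds of $O_i$ and vice versa.

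This upgrade is the main obstacle: weak-$*$ closeness alone does not give transversality, and the sizes of the Pesin manifolds of $\mu_n$ are only controlled on sets of positive measure. Here we would invoke the quadrilateral/dynamical Sard argument of \cite{BCS22}. Because $\mu_n$ has entropy bounded below, its unstable manifolds contain $su$-quadrilaterals of definite size. By the $C^\infty$ Yomdin-type control of \cite{BCS22}, a long unstable curve crossing a $\nu_i$-Pesin rectangle must intersect the stable lamination of $O_i$ transversally outside a set of small entropy. Hence $\mu_n$ is homoclinically related to $O_i$ for every $i$ with $\mu(\nu_i)>0$, and this forces all those $O_i$ to be related to one another. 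Distinct classes were assumed, so only one index occurs and $\mu=\nu_i$ is ergodic.

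For the exponents, since $\mu_n\to\mu$ with $h_{\mu_n}(f)\to h_\mu(f)$ (equality of pressures plus convergence of $\int\phi$) and $\mu$ ergodic, we apply the continuity result of \cite{BCS22I}. That result states that for $C^\infty$ surface diffeomorphisms, if $\mu_n\to\mu$ ergodic and $h_{\mu_n}(f)\to h_\mu(f)>0$, then $\lambda^+(\mu_n)\to\lambda^+(\mu)$. Upper semicontinuity of $\lambda^+$ is automatic; the lower bound is exactly what \cite{BCS22I} provides, since entropy cannot be lost.
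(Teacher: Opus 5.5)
Your first paragraph (the limit is an equilibrium state, and all ergodic components of $\mu$ and all $\mu_n$ for large $n$ have entropy bounded below by some $\delta>0$) and your last paragraph (continuity of $\lambda^+$ from \cite{BCS22I} once $\mu$ is known to be ergodic) match the paper. The gap is the key geometric step, which carries all the content of the proof.

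First, the bounds $c\le\lambda^+(\mu_n)\le\log\|Df\|$ do not give uniform-size Pesin manifolds on sets of $\mu_n$-measure bounded below. Bounds on exponents give no control of $\mu_n({\rm PES}_\ell^{\chi,\varepsilon}(f))$ with $\ell$ independent of $n$. That uniformity is an SPR-type statement, and in the paper (as in \cite{BCS25}) SPR is deduced \emph{from} the ergodicity of such limits together with continuity of exponents. Using it here is therefore circular.

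Second, even granting local unstable curves of $\mu_n$ near $\nu_i$-typical points, weak-$*$ convergence says nothing about their directions. That is the question of how the lifts of $\mu_n$ to $\PP TM$ converge, which is essentially the continuity of $\lambda^+$ you only obtain at the end. So you cannot conclude that these curves cross a $\nu_i$-Pesin rectangle. Moreover, an intersection property holding ``outside a set of small entropy'' would not give the positive-measure transverse intersections needed for $\mu_n\preceq\mu_{O_i}$ and $\mu_{O_i}\preceq\mu_n$. The appeal to the quadrilateral/Sard machinery does not identify a result yielding $\mu_n\overset{h}{\sim}O_i$ for every $i$ with positive weight, and that is exactly what your contradiction needs.

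The missing idea is to argue in the opposite direction, from the $\mu_n$ to the limit, through a \emph{closed} set. Since $h_{\mu_n}(f)>\delta$ for large $n$, finiteness of homoclinic classes of ergodic measures with entropy $>\delta$ (\cite[Theorem 5.2]{BCS22}) lets you pass to a subsequence along which all $\mu_n$ are carried by one compact invariant set ${\rm TH}(\mathcal O)=\overline{W^u(\mathcal O)\pitchfork W^s(\mathcal O)}$. Closedness gives $\mu({\rm TH}(\mathcal O))=1$, so almost every ergodic component $\nu'$ of $\mu$ is carried by ${\rm TH}(\mathcal O)$. Since $\nu'$ has entropy $>\delta>0$, \cite[Corollary 6.7]{BCS22} (Lemma~\ref{Lem:BTHC}(2) with $r=\infty$) gives $\nu'\overset{h}{\sim}\mathcal O$. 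All components then lie in one Borel homoclinic class, and uniqueness of the hyperbolic equilibrium state there (\cite[Corollary 3.3]{BCS22}) forces them to coincide. The transversality you were trying to build by hand is already packaged in \cite[Corollary 6.7]{BCS22}. The only extra input is the compactness of ${\rm TH}(\mathcal O)$, which is what lets the information pass to the weak-$*$ limit.
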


By \cite[Theorem A]{BCS22} and the ergodicity of the limit measure, the continuity of Lyapunov exponents follows directly.
This part will be discussed in Section \ref{SEC:Erg} and Section \ref{SEC:CofL}, and Theorem \ref{Thm: ErgodicCr} will states the $C^r$ version of this result.

\subsection{Properties derived from SPR for equilibrium states}
In this subsection, we assume throughout that $f: M \to M$ is a $C^{1+\alpha}$ surface diffeomorphism, 
and that $\phi: M \to \mathbb{R}$ is a H\"{o}lder continuous function such that there exists $\chi>0$ for which $f$ is $\chi$-SPR for $\phi$. 
Then, we have existence and finiteness of the ergodic equilibrium states of $\phi$: 

\begin{theoremalph}\label{Thm: B}	
Assume that $f$ is $\chi$-SPR for a H\"{o}lder continuous function $\phi$. 
Then, there exist at most finitely many ergodic equilibrium states $\mu_1,\dots,\mu_n$ of $\phi$. 
For every $i=1,\cdots,n$, the all Lyapunov exponents of  $\mu_i$ are outside $(-\chi,\chi)$.
Moreover, for each sequence of ergodic measures $\{\nu_n\}_{n>0}$ with $P_{\nu_n}(\phi) \to P(\phi)$ and $\nu_n \to \mu$, we have $\mu$ is an ergodic equilibrium state of $\phi$, and $\lambda^{+}(\nu_n) \to \lambda^{+}(\mu)$.
\end{theoremalph}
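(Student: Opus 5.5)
The plan is to reduce everything to the countable Markov shift machinery of Buzzi--Crovisier--Sarig \cite{BCS25}, where the SPR hypothesis is used to keep measures from escaping. We fix $\chi$ as in the hypothesis, choose $0<\varepsilon\ll\chi$, and use the Sarig/BCS symbolic coding: there is a locally compact countable Markov shift $(\Sigma,\sigma)$ and a H\"older factor map $\pi:\Sigma\to M$, finite-to-one on the regular part, such that every ergodic $\chi$-hyperbolic measure lifts to an ergodic $\sigma$-invariant measure of the same entropy. Moreover, the mass that a lift gives to a finite union of cylinders is controlled from below by the mass the projected measure gives to ${\rm PES}^{\chi,\varepsilon}_\ell(f)$. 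The potential $\Phi=\phi\circ\pi$ is H\"older on $\Sigma$, and lifting measures preserves metric pressure.

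\textbf{Step 1 (exponents and limit measures).} Let $\nu_n$ be ergodic with $P_{\nu_n}(\phi)\to P(\phi)$ and $\nu_n\to\mu$. By $\chi$-SPR, for every $\delta>0$ there is $\ell$ with $\nu_n({\rm PES}^{\chi,\varepsilon}_\ell)\ge1-\delta$ for large $n$. Since ${\rm PES}_\ell$ is closed and the splitting is continuous on it, we get $\mu({\rm PES}_\ell)\ge1-\delta$. Hence $\mu$-a.e.\ point has exponents outside $(-\chi,\chi)$. On ${\rm PES}_\ell$ the function $\log\|Df|_{E^u}\|$ is continuous, so uniform integrability together with the SPR tail estimate gives $\lambda^+(\nu_n)\to\lambda^+(\mu)$. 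Upper semicontinuity of entropy (in the $C^{1+\alpha}$ setting we use instead the lifted measures, as in Step 2) shows that $\mu$ is an equilibrium state.

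\textbf{Step 2 (lifting, ergodicity and finiteness).} Lift the $\nu_n$ to ergodic $\hat\nu_n$ on $\Sigma$. SPR ensures that $\hat\nu_n$ gives mass at least $1-\delta$ to a finite union of cylinders, uniformly in $n$, so the sequence is tight. A subsequence then converges to an invariant probability $\hat\mu$ with $\pi_*\hat\mu=\mu$. Because the Gurevich pressure of $\Phi$ on each irreducible component equals $P(\phi)$ and the sequence does not escape, the standard countable Markov shift argument (Sarig's pressure semicontinuity for tight sequences) gives $P_{\hat\mu}(\Phi)\ge\limsup P_{\hat\nu_n}(\Phi)$. The lift $\hat\mu$ is therefore an equilibrium measure of $\Phi$. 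Since the $\hat\nu_n$ are ergodic, each one lives on a single irreducible component, and SPR plus tightness forces only finitely many components to carry pressure close to $P(\phi)$. Each such component is then an SPR Markov shift for $\Phi$ in Sarig's sense, with Gurevich pressure equal to $P(\phi)$. By the Buzzi--Sarig theorem, each component has at most one equilibrium measure, and it is ergodic. Eventually all $\hat\nu_n$ lie in one component, so $\hat\mu$ is that unique ergodic measure and $\mu=\pi_*\hat\mu$ is ergodic. It also follows that there are at most finitely many ergodic equilibrium states. Existence comes from applying the argument to any maximizing sequence of ergodic measures.

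The main obstacle is the finiteness of the components that carry near-maximal pressure, together with the tightness transfer: we have to show that SPR mass on ${\rm PES}_\ell$ gives mass on finitely many symbols, uniformly for all lifts. Our first attempt would be to quote the corresponding statement in \cite{BCS25}, which was proved for MMEs using only the $\chi$-SPR property (their Theorem G setting), and to check that replacing entropy by $P_\mu(\phi)$ changes nothing, because $\Phi$ is bounded and H\"older.
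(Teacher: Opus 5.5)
\paragraph{Verdict.} Your proposal is correct in outline, but it follows a different route from the paper's. It is essentially the original symbolic route of \cite{BCS25} (their Theorems B and C).

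\paragraph{Your route.} You work in a countable Markov shift. Mass on ${\rm PES}^{\chi,\varepsilon}_\ell(f)$ is transferred to mass on finitely many cylinders, which makes the lifts $\hat\nu_n$ tight and rules out escape of mass. Upper semicontinuity of symbolic entropy then shows that $\hat\mu$ is an equilibrium measure. Finally, uniqueness from \cite{BS03} on the finitely many irreducible components containing those cylinders gives both finiteness and ergodicity. One nice feature is that $\hat\mu$ automatically lives on a closed component.

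\paragraph{The paper's route.} The paper stays on the manifold.
\begin{itemize}
\item \emph{Finiteness.} This comes from a geometric analogue of your ``finitely many symbols''. ${\rm PES}^{\chi,\varepsilon}_\ell(f)$ is covered by finitely many balls of radius $\delta_\ell$ on which local stable and unstable manifolds cross. Two ergodic measures charging the same ball are therefore homoclinically related. So only finitely many Borel homoclinic classes carry measures with $P_\mu(\phi)>P_0$. Uniqueness of hyperbolic equilibrium states inside a class (Lemma~\ref{Lem: HES}) then finishes.
\item \emph{Limits.} Theorem~\ref{Thm: SPR-BH} obtains $\lambda^+(\nu_n)\to\lambda^+(\mu)$ exactly as your Step~1 does (Proposition~\ref{Prop:CLE}). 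It replaces your symbolic semicontinuity by \cite[Corollary C]{LuY25}: continuity of $\lambda^+$ gives $\limsup h_{\nu_n}(f)\le h_\mu(f)$ for $C^{1+\alpha}$ surface diffeomorphisms. The same ball argument shows that almost every ergodic component of $\mu$ is homoclinically related to the class. This yields $\mu(X)=1$ even though $X$ is not closed, and uniqueness inside the class gives ergodicity.
\end{itemize}

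\paragraph{What each approach buys.} Your route front-loads the heavy input, namely the transfer from Pesin blocks to finitely many states, uniform over the lifts. This is the core of the SPR coding theorem, and you delegate it to \cite{BCS25}. In return you get existence, finiteness and ergodicity at once, and you set up the coding needed later for mixing. The paper's route uses only \cite{BCS22}, \cite{LuY25} and local product structure on Pesin blocks. It uses irreducible shifts only implicitly, through the uniqueness statement of \cite{BCS22}.

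\paragraph{Slips to fix.}
\begin{itemize}
\item The Gurevich pressure of $\Phi$ on an irreducible component is in general only $\le P(\phi)$. Equality holds on a component containing infinitely many $\hat\nu_n$, and that is all you actually use.
\item Build the coding for some $\chi_0<\chi$, with $\varepsilon$ chosen so that $\chi-\varepsilon>\chi_0$. Measures charging ${\rm PES}^{\chi,\varepsilon}_\ell(f)$ only have exponents of modulus $\ge\chi-\varepsilon$, and equilibrium states may have exponents exactly $\pm\chi$.
\item SPR of the components is not needed for this statement. Uniqueness from \cite{BS03} holds on any transitive component of finite pressure.
\item What you do need is finiteness of the Gurevich entropy of the component, which is at most $h_{\rm top}(f)$. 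Without it, upper semicontinuity of entropy along sequences without escape of mass can fail.
\end{itemize}
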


Theorem \ref{Thm: B} was first proved in \cite[Theorem B, Theorem C]{BCS25} using the SPR property of Markov coding.
Here we provide an alternative proof based on the homoclinic relation (see Theorem \ref{Thm: SPR-BH}), which does not rely on the SPR countable Markov shift. 
(However, we still implicitly use properties of irreducible Markov shifts to show that different ergodic equilibrium states correspond to different Borel homoclinic classes.)

In the setting of Theorem \ref{Thm: B}. 
For each ergodic equilibrium state $\mu$, one has $\mu$ is hyperbolic, and by \cite[Corollary 3.3]{BCS22} there exists $p:=p_{\mu}\in \mathbb{N}^{+}$ such that every ergodic component $\nu$ of $\mu$ with respect to $f^p$ is mixing, and $(f^p,\nu)$ isomorphic to a Bernoulli scheme. 
For every ergodic equilibrium state $\mu$ of $\phi$, we call the integer $p \geq 1$ the \textit{mixing order} of $\mu$.

As a consequence of strong positive recurrence, we get the following statistical properties for ergodic equilibrium states.
For $0<\beta<1$ and a continuous function $V:M\rightarrow \mathbb{R}$, we define
\begin{equation}\label{eq:holdernorm}
	\|V\|_{\beta}:= \sup_{x\in M} |V(x)|+\sup \left\{\frac{|V(x)-V(y)|}{d(x,y)^{\beta}},~x\neq y,~x,y\in M \right\}.
\end{equation}
If $\|V\|_{\beta}<+\infty$, then we call the function $V$ is $\beta$-H\"{o}lder continuous.

\begin{theoremalph}[Exponential mixing] \label{Thm: Exponential mixing}
Assume that $f$ is $\chi$-SPR for a H\"{o}lder continuous function $\phi$. 
Let $\mu$ be an ergodic equilibrium state of $\phi$ of mixing order $p\geq 1$. 
Then, for each $\beta>0$, there are $C>0$ and $\theta\in (0,1)$ such that for every $\beta$-H\"{o}lder continuous functions $V_1,V_2:M\rightarrow \mathbb{R}$ and every ergodic component $\nu$ of $\mu$ w.r.t. $f^p$, one has
		$$\left| \int V_1(x) \cdot V_2(f^{pn}(x))~{\rm d}\nu(x)- \int V_1~{\rm d}\nu  \cdot \int V_2~{\rm d}\nu \right|\leq C\cdot \theta^n \cdot \|V_1\|_{\beta} \cdot \|V_2\|_{\beta},~\forall n>0.$$
\end{theoremalph}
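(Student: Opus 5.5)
The plan is to follow the Markov coding route of Buzzi--Crovisier--Sarig \cite{BCS25}, with the entropy replaced by the metric pressure $P_\mu(\phi)$. There are four steps.

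\textbf{Step 1: coding.} Fix $\chi'$ slightly smaller than $\chi$. Using Sarig's coding for $C^{1+\alpha}$ surface diffeomorphisms (in the refined form of Buzzi--Crovisier--Sarig), we obtain a countable Markov shift $(\Sigma,\sigma)$ and a H\"older map $\pi:\Sigma\to M$ with $\pi\circ\sigma=f\circ\pi$. This coding is finite-to-one on the regular part. Every $\chi'$-hyperbolic ergodic measure lifts to $\Sigma$ with the same entropy. Since $\mu$ is an ergodic equilibrium state, Theorem~\ref{Thm: B} shows it is $\chi$-hyperbolic. Hence $\mu$ lifts to an ergodic equilibrium state $\hat\mu$ of $\hat\phi=\phi\circ\pi$ on one irreducible component $\Sigma_0$. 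The function $\hat\phi$ is H\"older for the symbolic metric, because $\pi$ is H\"older.

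\textbf{Step 2: SPR of the symbolic potential.} This step uses the $\chi$-SPR hypothesis. We must show that $\hat\phi$ is strongly positively recurrent on $\Sigma_0$ in Sarig's sense, i.e.\ that the Gurevich pressure restricted to loops avoiding a fixed finite set of states is strictly smaller than $P(\phi)$. Suppose not. Then there are ergodic measures $\hat\nu_n$ on $\Sigma_0$ whose pressures tend to $P(\phi)$ and which give mass tending to $0$ to every fixed finite union of cylinders. Their projections $\nu_n=\pi_*\hat\nu_n$ satisfy $P_{\nu_n}(\phi)\to P(\phi)$. By $\chi$-SPR, $\nu_n$ gives mass at least $1-\delta$ to ${\rm PES}_\ell^{\chi,\varepsilon}(f)$ for some $\ell$ independent of $n$. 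By the local finiteness of the coding, points of a Pesin block ${\rm PES}_\ell$ are coded only by finitely many states (\cite[Section~3]{BCS25}). This gives a contradiction. Thus $(\Sigma_0,\hat\phi)$ has a spectral gap: the Ruelle operator on one-sided H\"older functions, after passing to the period-$p$ spectral decomposition, has an isolated simple leading eigenvalue. This follows from Cyr--Sarig and the Young tower built from first returns to a state, which has exponential tails.

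\textbf{Step 3: transfer to $M$.} Let $V_1,V_2$ be $\beta$-H\"older. Then $V_i\circ\pi$ are H\"older on $\Sigma$. We replace them by functions depending only on the future using the standard Sinai cohomology trick; this costs a term of order $\theta^{n}$. We then apply the spectral gap for $\sigma^p$ on each cyclic component, which correspond to the ergodic components $\nu$ of $\mu$ under $f^p$. Since $\pi$ is a measurable isomorphism between $\hat\mu$ and $\mu$, the correlations on $M$ equal the symbolic correlations. The constants are uniform over the $p$ components.

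\textbf{Main obstacle.} The hard step is Step 2: the symbolic SPR property and exponential tails for the return time. The difficulty is that the norms $\|V\circ\pi\|$ must be controlled uniformly, while $\pi$ is only H\"older with constants depending on the states. I would follow \cite{BCS25} and work on the induced tower over a single state with bounded H\"older constants. The exponential tail of the return time would come from the pressure gap established above.
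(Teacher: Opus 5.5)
\paragraph{Verdict.} Your overall route matches the paper's, but Step~3 has a genuine gap: the identification of the period of $\Sigma_0$ with the mixing order $p$ of $\mu$.

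\paragraph{Where the routes agree.} Both arguments code the class of $\mu$ by an irreducible Markov shift. Both get SPR of $\hat\phi=\phi\circ\pi$ from the $\chi$-SPR of $f$; the paper simply quotes Theorem~\ref{Thm:SPRcoding}(e), whose proof is essentially your ``Pesin blocks are coded by finitely many states'' argument. Both then quote exponential mixing for SPR shifts (Theorem~\ref{Thm:C-EX}, which packages your Cyr--Sarig/tower sketch) and pull back to $M$.

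\paragraph{The gap.} In Step~3 you identify the period of $\Sigma_0$ with the mixing order $p$ of $\mu$. You also identify the cyclic pieces of $\Sigma_0$ with the $f^p$-ergodic components $\nu$. Your justification is that ``$\pi$ is a measurable isomorphism between $\hat\mu$ and $\mu$.'' None of the coding properties you list, or that the paper uses, gives this. The map $\pi$ is only finite-to-one on $\Sigma^{\#}$ and entropy-preserving. Such an extension can have period $q$ strictly larger than $p$. For example, the $2$-to-$1$ parity cover of a full shift (states $(a,i)$ with $i\in\{0,1\}$, edges $(a,i)\to(b,1-i)$) has period $2$ over a mixing base. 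In that case the symbolic spectral gap controls only $\sigma^{q}$, so only the times $f^{qk}$. Moreover, several $\sigma^q$-pieces may project onto the same $\nu$. As written, your Step~3 gives nothing for $f^{pn}$ when $q\nmid pn$. The paper states explicitly that $q$ may differ from $p$.

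\paragraph{How the paper closes it.} It adds three steps.
\begin{enumerate}
\item \emph{$p\mid q$.} Take the index function $C:X\to\{0,\dots,p-1\}$ with $C\circ f=C+1 \bmod p$. Then $\hat C=C\circ\pi$ satisfies $\hat C\circ\sigma^q=\hat C+q \bmod p$. On each $\sigma^q$-ergodic component $\hat\nu$ of $\hat\mu$, which is Bernoulli and hence $\sigma^{pq}$-ergodic, $\hat C$ is a.e.\ constant. This forces $q\equiv 0 \bmod p$.
\item \emph{Components correspond.} The $f^j_*\nu$ are $f^p$-Bernoulli, and the $\pi_*\sigma^i_*\hat\nu'$ are factors of $\sigma^q$-Bernoulli measures. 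Comparing these two ergodic decompositions of $\mu$ under $f^{pq}$ shows that every $\nu$ equals $\pi_*\hat\nu$ for some $\sigma^q$-component $\hat\nu$. Hence the correlations of $\nu$ under $f^{qk}$ equal those of $\hat\nu$ under $\sigma^{qk}$.
\item \emph{Arbitrary times.} Write $pn=qk_n+r_n$ with $0\le r_n<q$ and $p\mid r_n$. By $f^p$-invariance of $\nu$ you may replace $V_2$ by $V_2\circ f^{r_n}$. Its $\beta$-norm is at most $(\sup\|Df\|)^{q\beta}\|V_2\|_\beta$. The $\sigma^q$ bound with rate $\hat\theta^{k_n}$, where $k_n\approx pn/q$, then gives the claim with $\theta$ essentially $\max\hat\theta^{p/q}$.
\end{enumerate}

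\paragraph{A side remark.} The ``main obstacle'' you flag does not arise here. The map $\pi$ is globally H\"older, so $\|V\circ\pi\|_{\hat\beta}\le C\|V\|_\beta$ directly. Lemma~\ref{Lem:Lem3.9} is needed only for the quasi-H\"older strengthening.
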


The following theorem provides a quantitative upper-bound of the distance between ergodic measures supported on a homoclinic class and the corresponding equilibrium state.
\begin{theoremalph} \label{Thm: Effective intrinsic ergodicity}
	Assume that $f$ is $\chi$-SPR for a H\"{o}lder continuous function $\phi$. 
	For every $\beta\in (0,1)$, there is $C>0$ such that for every $\beta$-H\"{o}lder continuous function $V:M\rightarrow \mathbb{R}$ and every invariant measure $\nu$, there exists an equilibrium state $\mu$ such that (if the measure $\nu$ is ergodic, then the measure $\mu$ is also an ergodic equilibrium state)
	$$\left| \int V (x) ~{\rm d}\mu(x)- \int V (x)~{\rm d}\nu(x)  \right|\leq C \cdot \|V\|_{\beta} \cdot \sqrt{P_{\mu}(\phi)-P_{\nu}(\phi) }.$$
	Moreover, we also have  $\left| \lambda^{+}(\mu)-\lambda^{+}(\nu) \right|\leq C\sqrt{P_{\mu}(\phi)-P_{\nu}(\phi)}$.
\end{theoremalph}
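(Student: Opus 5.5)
We plan to reduce Theorem~\ref{Thm: Effective intrinsic ergodicity} to the corresponding statement for SPR countable Markov shifts, following the strategy of \cite{BCS25} for measures of maximal entropy. We begin with the ergodic case and treat general invariant $\nu$ by ergodic decomposition at the end. By Theorem~\ref{Thm: B}, the ergodic equilibrium states $\mu_1,\dots,\mu_n$ are finitely many and hyperbolic with exponents outside $(-\chi,\chi)$.

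\emph{Step 1: Compactness reduction.} We first show that it suffices to treat ergodic $\nu$ with $P_\nu(\phi)>P(\phi)-\delta$ for some small $\delta>0$. Indeed, if $P(\phi)-P_\nu(\phi)\ge\delta$, then the left-hand sides are bounded by $2\|V\|_\beta$ and by $2\max\log\|Df^{\pm1}\|$. These are at most $C\|V\|_\beta\sqrt{P(\phi)-P_\nu(\phi)}$, respectively $C\sqrt{\cdot}$, once $C\ge 2\delta^{-1/2}\max(1,\log\|Df^{\pm1}\|)$.

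\emph{Step 2: Coding near-equilibrium measures.} Using the $\chi$-SPR property, we apply the Sarig--BCS coding. There is $\ell$ and $\delta>0$ such that every ergodic $\nu$ with $P_\nu(\phi)>P(\phi)-\delta$ gives mass at least $1/2$ to ${\rm PES}^{\chi,\varepsilon}_\ell$. Hence $\nu$ lifts to one of finitely many irreducible countable Markov shifts $(\Sigma_i,\sigma)$, each carrying exactly one $\mu_i$ through a Hölder factor map $\pi_i$. Homoclinic relation (Theorem~\ref{Thm: SPR-BH}) selects the index $i$. The SPR property of $f$ for $\phi$ passes to the lifted potential $\Phi_i=\phi\circ\pi_i$, which is then strongly positive recurrent on $\Sigma_i$ in Sarig's sense. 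Uniformity in the Pesin level is what makes the Gurevich pressure at infinity strictly smaller than $P(\phi)$.

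\emph{Step 3: Spectral gap and quadratic pressure.} For SPR Hölder potentials on a topologically mixing (after passing to the period $p$ and a spectral decomposition) countable Markov shift, the transfer operator has a spectral gap on a suitable Banach space (Cyr--Sarig). The pressure function $t\mapsto P_{\Sigma_i}(\Phi_i+tW)$ is then analytic near $0$ for bounded Hölder $W$ and has bounded second derivative, uniform in $\|W\|$. Take $W=V\circ\pi_i$; for $\lambda^+$, take the Hölder function $\log\|Df|_{E^u}\|\circ\pi_i$, which is Hölder on the shift. The standard convexity argument gives
$$P_{\Sigma}(\Phi+tW)\le P(\phi)+t\int W\,d\hat\mu_i+Kt^2\|W\|^2.$$
Combining this with the variational inequality $P_{\hat\nu}(\Phi)+t\int W\,d\hat\nu\le P_\Sigma(\Phi+tW)$ yields
$$t\Big(\int W d\hat\nu-\int Wd\hat\mu_i\Big)\le (P(\phi)-P_\nu(\phi))+Kt^2\|W\|^2.$$
Optimizing over $t$ of either sign gives the bound $2\|W\|\sqrt{K(P(\phi)-P_\nu(\phi))}$. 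Here $h$ and $\int\phi$ are preserved under lifting, since $\pi_i$ is finite-to-one.

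\emph{Step 4: General $\nu$.} For non-ergodic $\nu$, we use the ergodic decomposition: components with large pressure deficit are handled by Step 1, the rest by Step 3. We then set $\mu=\sum_i a_i\mu_i$, where $a_i$ is the $\nu$-weight of the components attached to $\mu_i$ plus the weight of the bad components. Jensen's inequality, via concavity of $\sqrt{\cdot}$, sums the estimates into the stated bound.

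The main obstacle is Step 3, specifically the uniformity of the second-derivative bound $K$. It must hold over all $\beta$-Hölder $V$, which requires that $V\circ\pi_i$ be Hölder for the symbolic metric with norm $\lesssim\|V\|_\beta$, and that the spectral gap persist under perturbation on a fixed neighborhood. If this fails, we would instead try a restricted neighborhood $|t|\le t_0/\|W\|$ and fall back to Step 1 for large deviations.
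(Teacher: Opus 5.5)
Your proposal is correct and follows the paper's route. The paper also uses SPR, via a compactness argument and Theorem~\ref{Thm: SPR-BH}, to show that every ergodic $\nu$ with $P_\nu(\phi)$ close to $P(\phi)$ is homoclinically related to one of the finitely many ergodic equilibrium states. It then lifts $\nu$ through the SPR coding of that Borel homoclinic class, applies effective intrinsic ergodicity on the SPR Markov shift, bounds low-pressure measures trivially, and handles non-ergodic $\nu$ by ergodic decomposition and Jensen.

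The main difference is in your Step 3. There you sketch, via analyticity of pressure and a uniform second-derivative bound, the shift-level estimate that the paper simply quotes from R\"uhr--Sarig (Theorem~\ref{Thm:C-EIE}). That uniform bound, which you flag as the main obstacle, is exactly what the quoted theorem supplies. Your Step 2 ``hence'' compresses the paper's Claim about homoclinic relation to an equilibrium state, but points to the right ingredient.
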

The following Corollary~\ref{Cor:EIE} improves Theorem~\ref{Thm: Erg}.
\begin{Corollary}[Effective intrinsic ergodicity]\label{Cor:EIE}
	Let $f:M\rightarrow M$ be a $C^{\infty}$ transitive surface diffeomorphism with positive topological entropy.
	Suppose $\phi:M\to \RR$ is H\"{o}lder continuous with ${\rm Var}( \phi )< h_{\rm top}(f)$. 
	Let $\mu$ be the unique equilibrium state of $\phi$.
	Then for each $\beta>0$,  there is $C>0$ such that for every invariant measure $\nu$ and every $\beta$-H\"{o}lder continuous function $V:M\rightarrow \mathbb{R}$, we have 
		$$\left| \int V (x) ~{\rm d}\mu(x)- \int V (x)~{\rm d}\nu(x)  \right|\leq C \cdot \|V\|_{\beta} \cdot \sqrt{P_{\mu}(\phi)-P_{\nu}(\phi) }$$
    and $\left| \lambda^{+}(\mu)-\lambda^{+}(\nu) \right|\leq C\sqrt{P_{\mu}(\phi)-P_{\nu}(\phi)}$, 
\end{Corollary}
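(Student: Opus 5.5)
The plan is to read Corollary~\ref{Cor:EIE} off Theorem~\ref{Thm: A} and Theorem~\ref{Thm: Effective intrinsic ergodicity}; the only additional input needed is uniqueness of the equilibrium state under transitivity.

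First, since $f$ is $C^\infty$ with $h_{\rm top}(f)>0$ and $\phi$ is H\"older with ${\rm Var}(\phi)<h_{\rm top}(f)$, Theorem~\ref{Thm: A} gives some $\chi>0$ such that $f$ is $\chi$-SPR for $\phi$. Theorem~\ref{Thm: B} then says $\phi$ has finitely many ergodic equilibrium states, each hyperbolic with exponents outside $(-\chi,\chi)$.

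Second, we need that there is exactly one equilibrium state $\mu$. The statement presupposes it, and in the transitive setting it follows from \cite[Corollary 1.4]{BCS22}: under transitivity there is a single relevant homoclinic class carrying all hyperbolic measures of high pressure. Each ergodic equilibrium state has pressure $P(\phi)$, hence entropy at least $P(\phi)-\sup\phi \ge h_{\rm top}(f)-{\rm Var}(\phi)>0$, so it is hyperbolic and is carried by that class. Uniqueness of the equilibrium state inside a homoclinic class then gives a unique ergodic equilibrium state. Any equilibrium state has ergodic components that are equilibrium states, so $\mu$ is the unique equilibrium state overall.

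Third, fix $\beta$ and let $C$ be the constant from Theorem~\ref{Thm: Effective intrinsic ergodicity}. Given $\nu$ and $V$, that theorem produces an equilibrium state $\mu'$ satisfying the two inequalities. By uniqueness, $\mu'=\mu$, and both bounds follow directly. Note that $P_\mu(\phi)-P_\nu(\phi)=P(\phi)-P_\nu(\phi)\ge 0$, so the square root is well defined. If $\beta\ge 1$, the only non-Lipschitz-type nuisance, we use $\|V\|_{\beta'}\le C'\|V\|_\beta$ for a smaller $\beta'<1$ on the compact manifold $M$ and apply the theorem with $\beta'$.

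The main obstacle is the uniqueness step, that is, checking that transitivity indeed yields a single equilibrium state through \cite{BCS22}. Everything else is bookkeeping.
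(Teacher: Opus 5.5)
Your proposal is correct and follows essentially the paper's route: the paper also reduces the corollary to Theorem~\ref{Thm: Effective intrinsic ergodicity}, with the SPR hypothesis coming from Theorem~\ref{Thm: A}, and gets uniqueness from transitivity — at most one Borel homoclinic class carries ergodic measures of positive entropy, Lemma~\ref{Lem: EH} bounds equilibrium-state entropy away from zero, and Lemma~\ref{Lem: HES} gives uniqueness within a class. Your reduction of the case $\beta\ge 1$ to some $\beta'<1$ handles a point the paper leaves implicit, since Theorem~\ref{Thm: Effective intrinsic ergodicity} is stated only for $\beta\in(0,1)$.
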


Theorems \ref{Thm: Exponential mixing}, Theorem \ref{Thm:T-ET} will be proved in Section \ref{SEC:EM}, Theorem  \ref{Thm: Effective intrinsic ergodicity} and Corollary \ref{Cor:EIE} will be proved in Section \ref{SEC:EIE}.
\begin{theoremalph}[Stability] \label{Thm: Stability}
	Assume that $f$ is $\chi$-SPR for a H\"{o}lder continuous function $\phi$. 
	Then, there exists $\varepsilon>0$ such that for every H\"{o}lder continuous function $\psi$ with $\sup|\psi|<\varepsilon$, 
	we have $f$ is $\chi$-SPR for $\phi+\psi$. 
	Moreover, the map $t\rightarrow P(\phi+t\psi)$ is real-analytic on $(-1,1)$.
\end{theoremalph}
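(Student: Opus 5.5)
We prove the two assertions separately. The first is a direct perturbation argument on the definition of $\chi$-SPR. The second goes through the symbolic coding.

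\emph{Step 1: stability of SPR.} Fix $\varepsilon_0>0$ and a sequence of ergodic measures $\{\nu_n\}$ with $P_{\nu_n}(\phi+\psi)\to P(\phi+\psi)$. We have $|P_{\nu}(\phi+\psi)-P_\nu(\phi)|\le \sup|\psi|$ for every $\nu$, and the same bound for the topological pressures. Hence
$$\liminf_n P_{\nu_n}(\phi) \ge P(\phi)-2\sup|\psi|,$$
so $\nu_n$ only nearly maximizes the pressure of $\phi$. The definition of $\chi$-SPR only concerns sequences whose pressures converge to $P(\phi)$, so we need a quantitative version. We would establish the following statement.

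\emph{($\ast$) There is $\delta>0$ such that for every $\varepsilon>0$ and $\tau>0$ there is $\ell$ with $\mu({\rm PES}^{\chi,\varepsilon}_\ell)>1-\tau$ whenever $\mu$ is ergodic and $P_\mu(\phi)>P(\phi)-\delta$.}

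Note that plain compactness does not give ($\ast$): the SPR definition controls only the limit $P_{\mu_n}(\phi)\to P(\phi)$, not a fixed gap $\delta$. We would obtain ($\ast$) from the structure given by Theorem~\ref{Thm: B} together with the BCS25 SPR-coding. SPR yields finitely many equilibrium states, each lifted to an SPR countable Markov shift. On an SPR Markov shift there is a pressure gap at infinity: measures that give small weight to a fixed finite set of states have pressure bounded away from $P(\phi)$ by some $\delta_0$ (this is essentially the Gurevich--Sarig characterization of SPR). Measures with pressure above $P(\phi)-\delta_0$ therefore charge a fixed finite union of states, and these project into a fixed Pesin set. We then take $\varepsilon<\delta/2$. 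This step is the main obstacle. If the characterization from BCS25 (their Theorem~G type argument) is available in the form of "SPR is an open condition in $\sup$-norm", we would cite it directly.

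\emph{Step 2: analyticity.} For $|t|<1$, applying Step 1 to $t\psi$ (with $\sup|t\psi|<\varepsilon$) shows that $f$ is $\chi$-SPR for $\phi+t\psi$. Fix $t_0\in(-1,1)$. By Theorem~\ref{Thm: B}, $\phi+t_0\psi$ has finitely many ergodic equilibrium states, each hyperbolic. Each is carried by an irreducible SPR countable Markov shift $(\Sigma_i,\sigma)$ with a H\"older-continuous, finite-to-one coding $\pi_i$, with pressure $P(\phi+t\psi)=\max_i P_{G}(\Sigma_i,(\phi+t\psi)\circ\pi_i)$ for $t$ near $t_0$. Sarig's theory of SPR potentials then gives the following: the Ruelle operator of the one-sided reduction has a spectral gap on a suitable Banach space, and $t\mapsto P_G(\Sigma_i,(\phi+t\psi)\circ\pi_i)$ is real-analytic near $t_0$, since SPR is open under bounded H\"older perturbations.

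It remains to show that the maximum of these finitely many analytic functions is analytic. The pieces achieving the maximum at $t_0$ may differ, and non-maximal pieces stay below it nearby. For the maximal ones, we would argue that two homoclinic classes attaining $P$ on a whole interval give identical analytic branches, or else equal only at isolated points. To rule out kinks, we would use a Step~1-type argument: near $t_0$ the set of equilibrium-state classes is locally constant. Each such class persists because SPR Markov shifts remain SPR under perturbation, and no new class appears because of the pressure gap in ($\ast$). Hence $P$ coincides near $t_0$ with a single analytic branch.
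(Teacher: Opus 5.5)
Both steps contain a genuine gap. For comparison, the paper gives no argument of its own here: it defers to \cite[Theorem G, Theorem B.20]{BCS25}, i.e.\ to the SPR codings of Theorem~\ref{Thm:SPRcoding} and the openness of SPR for Markov shifts (Remark~\ref{Rem:SPRpro}).

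\textbf{Step 1.} Your statement $(\ast)$ has the wrong quantifiers. It asks for one fixed pressure window $P_\mu(\phi)>P(\phi)-\delta$ on which, for \emph{every} $\tau>0$, a single Pesin block carries mass $>1-\tau$ of every ergodic measure. SPR gives this only with a threshold $P_0(\tau)$ that may tend to $P(\phi)$ as $\tau\to0$, and $(\ast)$ fails in general. Inside a fixed window, an ergodic measure can spend a fixed proportion of its time, of order $\delta$ divided by the gap between $P(\phi)$ and the pressure at infinity, in arbitrarily long stretches of weak hyperbolicity (e.g.\ shadowing a non-hyperbolic periodic orbit accumulated by the class). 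Such stretches leave any prescribed ${\rm PES}^{\chi,\varepsilon}_\ell(f)$. In particular, $(\ast)$ would force every ergodic measure in the window to have exponents of modulus $\ge\chi$. That already fails when $\chi$ is the smallest exponent modulus of the equilibrium states, since small perturbations (e.g.\ $\phi+tJ^u$, $t<0$, for a non-linear Anosov map) lower it. For the same reason you cannot in general keep the same $\chi$ for $\phi+\psi$.

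Your sketch does not produce $(\ast)$ either. The pressure gap at infinity only says that measures in the window give mass at least some \emph{fixed} $\tau_0>0$ to a fixed finite set of states. What survives the perturbation is this weak information. By the Claim in the proof of Theorem~\ref{Thm: Effective intrinsic ergodicity}, if $2\sup|\psi|<P(\phi)-P_1$, then all near-maximizers of $\phi+\psi$ lie in the Borel homoclinic classes of the finitely many ergodic equilibrium states of $\phi$. The codings of these classes are SPR for $\hat\phi$ by Theorem~\ref{Thm:SPRcoding}(e), and there $\Delta(\hat\phi+\hat\psi)\ge\Delta(\hat\phi)-2\sup|\psi|>0$. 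Full tightness must then be recovered on the symbolic side, where near-maximizers of an SPR shift are tight on finite sets of states, and pushed back to $M$. That transfer is the actual content of \cite[Theorem G]{BCS25}.

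\textbf{Step 2.} Your reduction near $t_0$ to $P(\phi+t\psi)=\max_i P_{X_i}(\phi+t\psi)$ is sound: finitely many classes with SPR codings, each branch real-analytic by the spectral theory of SPR shifts. This is all the symbolic input \cite[Theorem B.20]{BCS25} provides. Your argument excluding kinks, however, is wrong and cannot be repaired. Persistence of SPR on $X_i$ says nothing about $X_i$ still attaining the global maximum, so the set of classes carrying equilibrium states is not locally constant. At a point where $P_{X_1}-P_{X_2}$ changes sign, the maximum agrees with different analytic branches on the two sides and is not analytic. So ``equal only at isolated points'' is exactly the bad case, not a way out.

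Here is a concrete counterexample. Let $f$ be an Axiom~A surface diffeomorphism whose non-wandering set consists of two horseshoes of equal entropy and finitely many periodic orbits, and let $\phi=0$, for which $f$ is $\chi$-SPR. Take $\psi$ H\"older with values in $[-c,c]$, $0<c<\varepsilon$, equal to $c$ near the first horseshoe and to $-c$ near the second. Then $P(t\psi)=h_{\rm top}(f)+c|t|$, which is not analytic at $t=0$.

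So analyticity on $(-1,1)$ holds for each branch $t\mapsto P_X(\phi+t\psi)$. For $t\mapsto P(\phi+t\psi)$ it holds only when $\phi$ has a unique equilibrium state, e.g.\ for transitive $f$ as in Corollary~\ref{Cor:EIE}. In that case, take $\varepsilon$ smaller than half the pressure gap between $P(\phi)$ and the ergodic measures off its class; this gap is positive by SPR. Then that single class dominates for all $|t|<1$, and $P(\phi+t\psi)$ is one analytic branch.
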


\begin{theoremalph}[Exponential Tails] \label{Thm:  Exponential Tails}
	Assume that $f$ is $\chi$-SPR for a H\"{o}lder continuous function $\phi$. 
	Then, for every $\varepsilon>0$, there are $\ell>0, C>0$ and $\theta\in (0,1)$ such that for every ergodic equilibrium state $\mu$ of $\phi$ and every $n>0$,
	$$\mu\Big(M-\bigcup_{j=0}^{n-1} f^j( {\rm PES}_{\ell}^{\chi,\varepsilon}(f)) \Big)\leq C \cdot \theta^n.$$
\end{theoremalph}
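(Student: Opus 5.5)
We will prove the exponential tail estimate by passing to a countable Markov shift coding of the equilibrium states and then using the strong positive recurrence of that shift.

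\textbf{Step 1: coding.} Fix $\varepsilon>0$, and first reduce to $\varepsilon$ small, since ${\rm PES}_\ell^{\chi,\varepsilon}$ is increasing in $\varepsilon$. By Theorem~\ref{Thm: B} there are finitely many ergodic equilibrium states $\mu_1,\dots,\mu_N$, all $\chi$-hyperbolic. So it suffices to produce, for each $\mu_i$, constants $\ell_i,C_i,\theta_i$ and then take $\ell=\max\ell_i$, $C=\max C_i$, $\theta=\max\theta_i$.

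For a fixed $\mu=\mu_i$ we use Sarig's symbolic dynamics for $C^{1+\alpha}$ surface diffeomorphisms. This gives a locally compact countable Markov shift $(\Sigma,\sigma)$ and a H\"older, finite-to-one coding $\pi:\Sigma\to M$ with the following properties:
\begin{itemize}
\item $\pi\circ\sigma=f\circ\pi$;
\item every $\chi$-hyperbolic ergodic measure, in particular $\mu$, lifts to an ergodic $\sigma$-invariant measure $\widehat\mu$ on an irreducible component $\Sigma'$;
\item each symbol $v$ corresponds to a Pesin chart whose size is bounded below in terms of the Pesin level, so for each finite set $F$ of symbols there is $\ell(F)$ with $\pi(\{x\in\Sigma': x_0\in F\})\subset {\rm PES}_{\ell(F)}^{\chi,\varepsilon}(f)$.
\end{itemize}
The potential $\Phi=\phi\circ\pi$ is H\"older (it has summable variations) on $\Sigma'$, and $\widehat\mu$ is an equilibrium state of $\Phi$ with $P_G(\Phi)=P(\phi)$.

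\textbf{Step 2: SPR of the shift.} We show that $\Phi$ is strongly positively recurrent on $\Sigma'$ in Sarig's sense: there is a finite set $F$ of states with
$$P_G(\Phi)>\Delta_F:=\limsup_n \tfrac1n\log\sum_{\text{first-return loops outside } F \text{ of length } n} e^{\Phi_n}.$$
The argument is by contradiction. If for every finite $F$ the pressure of loops avoiding $F$ were close to $P(\phi)$, then we could build ergodic measures $\nu_n$ on $\Sigma'$ with $P_{\nu_n}(\Phi)\to P(\phi)$ and $\nu_n([F_n])\to 0$ along an exhausting sequence $F_n$. Their projections would have metric pressures tending to $P(\phi)$, but their mass on each fixed ${\rm PES}_\ell^{\chi,\varepsilon}$ would be small. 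To make this rigorous we need that $\pi^{-1}({\rm PES}_\ell)$ is essentially contained in finitely many cylinders, up to controlled comparison of charts. This contradicts the $\chi$-SPR hypothesis. Making this correspondence precise in both directions is the main obstacle. Here we plan to invoke the arguments of \cite{BCS25} (their Theorem on SPR codings), which derive the shift SPR from the measure-theoretic SPR.

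\textbf{Step 3: tails on the shift.} By Sarig's theory (Ruelle–Perron–Frobenius for SPR potentials), $\widehat\mu$ is the unique equilibrium state on $\Sigma'$ and has exponential tails for first return to $[F]$. Concretely,
$$\widehat\mu\{x: x_j\notin F,\ 0\le j<n\}\le C\theta^n,$$
with $\theta$ any number in $(e^{\Delta_F-P_G},1)$. This follows from the Gibbs-type upper bound $\widehat\mu[a_0\dots a_{n-1}]\lesssim e^{\Phi_n-nP}$ for the RPF measure, summed over all words avoiding $F$, where the sum grows at rate $\Delta_F$ (up to the bounded boundary symbols).

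\textbf{Step 4: projection.} Finally we project. If $\pi(x)\notin\bigcup_{j<n}f^j({\rm PES}_\ell)$ with $\ell=\ell(F)$, then $f^{-j}\pi(x)=\pi(\sigma^{-j}x)\notin{\rm PES}_\ell$ for all $0\le j<n$. Hence $x_{-j}\notin F$ for $0\le j<n$. By $\sigma$-invariance, this set has $\widehat\mu$-measure at most $C\theta^n$, and pushing forward via $\mu=\pi_*\widehat\mu$ gives the claim.
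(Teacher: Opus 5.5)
Your architecture is the one the paper has in mind. The paper gives no separate proof of this theorem; it only says the result follows from three ingredients plus a uniformity step:
\begin{itemize}
\item the SPR coding of Theorem~\ref{Thm:SPRcoding}(e), which is your Step~2 and which you likewise defer to \cite{BCS25};
\item the exponential tail estimate for SPR Markov shifts \cite[Corollary 7.8]{BCS25}, which is your Step~3;
\item projection, which is your Step~4 and is correct;
\item finiteness of the ergodic equilibrium states (Theorem~\ref{Thm: B}), which makes the constants uniform.
\end{itemize}

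\textbf{The gap is in Step~3.} The derivation you give there fails as written. The sum you propose is infinite. Because the graph is locally finite, only finitely many states can reach $F$ within $n$ steps. Hence there are infinitely many admissible words of length $n$ avoiding $F$. Each has weight at least $e^{n(\inf\phi-P)}$, since $\Phi=\phi\circ\pi$ is bounded.

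The bound $\widehat\mu[a_0\dots a_{n-1}]\lesssim e^{\sup\Phi_n-nP}$ is also not uniform. Its constant depends on the end symbols $a_0,a_{n-1}$ through the RPF eigenfunction and eigenmeasure, and for words avoiding $F$ these end symbols range over infinitely many states. So the boundary symbols are not bounded. The quantity $\Delta_F$ controls only excursions that \emph{begin and end} in $F$, not arbitrary words outside $F$.

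The missing step is the excursion (Kac tower) decomposition. Take a.e. $x$ with $x_0,\dots,x_{n-1}\notin F$, and let $k\ge 1$ be minimal with $x_{-k}\in F$. Then $y=\sigma^{-k}x\in[F]$ has first return time $r_F(y)\ge k+n$, so
\[
\widehat\mu\{x:\ x_0,\dots,x_{n-1}\notin F\}=\sum_{m\ge n+1}\widehat\mu\big([F]\cap\{r_F\ge m\}\big).
\]
Each term $\widehat\mu([F]\cap\{r_F=m\})$ is a sum of $\widehat\mu[a_0\dots a_m]$ over first-return words with $a_0,a_m\in F$. For these words the local Gibbs bound holds with a constant depending only on the finite set $F$. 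Hence each term is at most $C_F e^{-(P-\Delta_F-\delta)m}$. Summing twice gives $C\theta^n$ for any $\theta\in(e^{\Delta_F-P},1)$. With $F=\{a\}$ this is exactly the paper's $Z^*_m(\hat\phi,a)$ together with $\Delta(\hat\phi,a)>0$. Alternatively, simply cite the SPR tail estimate, as the paper does.

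\textbf{A smaller point in Step~1.} The inclusion $\pi(\{x_0\in F\})\subset{\rm PES}_{\ell(F)}^{\chi,\varepsilon}(f)$ is not automatic; it depends on the coding exponent $\chi_0$, which is constrained from both sides.
\begin{itemize}
\item You need $\chi_0<\chi$ to lift $\mu$ at all, because Theorem~\ref{Thm: B} only places the exponents outside the open interval $(-\chi,\chi)$. They may equal $\pm\chi$, so your phrase ``every $\chi$-hyperbolic measure, in particular $\mu$'' is not justified.
\item You need $\chi_0>\chi-\varepsilon$, together with hyperbolicity estimates for coded points at rate close to $\chi_0$ that are tempered along the orbit, to get the inclusion. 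With a smaller $\chi_0$, an irreducible coding can contain a periodic orbit with exponent in $(\chi_0,\chi-\varepsilon)$. By irreducibility every cylinder then contains points forward asymptotic to that orbit, and such points lie in no ${\rm PES}^{\chi,\varepsilon}_\ell(f)$.
\end{itemize}
Since you already reduced to small $\varepsilon$, choosing $\chi_0\in(\chi-\varepsilon,\chi)$ fixes this.
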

\begin{Remark}
	{\rm Note that Theorem \ref{Thm: Stability} and Theorem \ref{Thm:  Exponential Tails}, 
		hold for every $C^{\infty}$ surface diffeomorphism $f$ with $h_{\rm top}(f)>0$ and every H\"{o}lder continuous function $\phi$ satisfying ${\rm Var}(\phi)<h_{\rm top}(f)$, because in this case, by Theorem \ref{Thm: A}, there exists $\chi>0$ such that $f$ is $\chi$-SPR for $\phi$.}
\end{Remark}

As stated in \cite[Theorem G]{BCS25}, stability, exponential tails, asymptotic variance (see \cite[Section 11.4]{BCS25}), central limit theorem (see \cite[Section 11.5]{BCS25}) and large deviations (see \cite[Section 11.6]{BCS25}) can be established in a similar way by using the SPR properties of countable Markov shifts together with analogous arguments in Sections \ref{SEC:EM} and \ref{SEC:EIE}. 
These properties of SPR countable Markov shifts are detailed in \cite[Theorem B.20]{BCS25} for stability, \cite[Corollary 7.8]{BCS25} for exponential tails, \cite[Section B.5]{BCS25} for asymptotic variance, \cite[Section C.1]{BCS25} for functional central limit theorem, and \cite[Section B.6]{BCS25} for large deviations.

\section{Proof of Theorem \ref{Thm: A}} 
In this subsection, we assume throughout that $f: M \to M$ is a $C^{r},r\in (1,+\infty]$ surface diffeomorphism with $h_{\mathrm{top}}(f)>0$, and that $\phi: M \to \mathbb{R}$ is a continuous function.
We denote 
$$ \lambda_{\min}(f):=\min \left\{ \lim_{n\rightarrow +\infty} \frac{1}{n} \log~ \sup_{x\in M} \|D_xf^n\|, \lim_{n\rightarrow +\infty} \frac{1}{n} \log~ \sup_{x\in M} \|D_xf^{-n}\| \right\}.$$

\subsection{Borel homoclinic classes and topological homoclinic classes}\label{SEC:HC}
Let ${\rm NUH}^{\#}(f)$ be the set of all points $x$ for which there exist $\chi>0$, $0<\varepsilon\ll \chi$, $\ell>0$ and $\{n_k\}, \{m_k\} \subset \NN$ such that $f^{n_k}(x), f^{-m_k}(x)\in {\rm PES}_{\ell}^{\chi,\varepsilon}(f)$ and $f^{n_k}(x) \rightarrow x$, $f^{m_k}(x) \rightarrow x$ as $k\rightarrow +\infty$.
By the Pesin theory and the Poincar\'{e} recurence theorem, for every ergodic hyperbolic measure $\mu$, one has $\mu ({\rm NUH}^{\#}(f))=1$.
For every $x\in {\rm NUH}^{\#}(f)$, the stable and unstable manifold
\begin{align*}
		&W^s(x):=\Big\{z : \limsup_{n\rightarrow +\infty} \frac{1}{n} \log d(f^{n}(x),f^{n}(z))<0 \Big \} \\
		&W^u(x):=\Big\{z : \limsup_{n\rightarrow +\infty} \frac{1}{n} \log d(f^{-n}(x),f^{-n}(z))<0 \Big\} 
\end{align*}
are injectively immersed $C^r$-submanifolds.

For two ergodic hyperbolic measures $\mu_1$ and $\mu_2$, we say that $\mu_1 \preceq \mu_2$, if  there exist $\Lambda_1$ and $\Lambda_2$ such that $\mu_1(\Lambda_1)>0$, $\mu_2(\Lambda_2)>0$ and for any $x\in\Lambda_1$ and $y\in\Lambda_2$, $W^u(x)$ intersects $W^s(y)$ transversely.
We say that $\mu_1$ is \textit{homoclinically related with} $\mu_2$ ($\mu_1 \overset{h}{\sim} \mu_2$),  if $\mu_1 \preceq \mu_2$ and  $\mu_2 \preceq \mu_1$.  
By \cite[Proposition 2.11]{BCS22}, the homoclinic relation is an equivalence relation for ergodic hyperbolic measures.

Suppose $\mathcal{O}$ is a hyperbolic periodic orbit and $\mu_{\mathcal{O}}$ is the ergodic measure supported on $\mathcal{O}$. 
We say that an ergodic hyperbolic measures $\mu$ is homoclinically related with $\mathcal{O}$ ($\mu \overset{h}{\sim} \mathcal{O}$), if $\mu$ is homoclinically related with $\mu_{\mathcal{O}}$. 
In this case, for $\mu$-almost every $x$,  $W^u(x)$ and $W^s(\mathcal{O})$ have transverse intersections, and $W^s(x)$ and $W^u(\mathcal{O})$ have transverse intersections. 
By Katok's shadowding lemma \cite{Kat80}, one knows that for every ergodic hyperbolic measure $\mu$, there exists a hyperbolic periodic orbit $\mathcal{O}$ that homoclinically related with $\mu$.

Let $\mu$ be an ergodic hyperbolic measure of $f$ and let $\mathcal{O}$ be a hyperbolic periodic orbit that $\mu \overset{h}{\sim} \mathcal{O}$. 
We define the \textit{Borel homoclinc class} of $\mu$ by
$${\rm BH}(\mu):={\rm BH}(\mathcal{O})=\{y \in {\rm NUH}^{\#}(f): W^u(y) \pitchfork  W^s(\mathcal{O})\neq \emptyset,~W^s(y) \pitchfork  W^u(\mathcal{O})\neq \emptyset\}; $$
and define the \textit{topological homoclinc class} of $\mu$ by
$${\rm TH}(\mu):={\rm TH}(\mathcal{O})=\overline{W^u(\mathcal{O}) \pitchfork W^s(\mathcal{O})}. $$
By definition, ${\rm BH}(\mu)$ is a measurable $f$-invariant set, ${\rm TH}(\mu)$ is a compact $f$-invariant set and ${\rm TH}(\mu)=\overline{{\rm BH}(\mu)}$.
For topological homoclinc class and Borel homoclinc class, we have the following result 
(we denote $(\lambda_{\min}(f)/\infty):=0$).
\begin{Lemma}\label{Lem:BTHC}
	Let $f:M\rightarrow M$ be a $C^{r},r\in (1,+\infty]$ surface diffeomorphism with positive topological entropy and $\mu$ be an ergodic hyperbolic measure of $f$.
	For every ergodic hyperbolic measure $\nu$, 
	\begin{enumerate}
		\item [(1)] 	$~\nu({\rm BH}(\mu))=1$ if and only if $\mu \overset{h}{\sim} \nu$;
		\item [(2)]     if  $h_{\mu}(f)>\frac{\lambda_{\min}(f)}{r}$ and $h_{\nu}(f)>\frac{\lambda_{\min}(f)}{r}$, then $~\nu({\rm TH}(\mu))=1$ if and only if $\mu \overset{h}{\sim} \nu$.
	\end{enumerate}
	\begin{proof}
		Let $\mathcal{O}$ be a hyperbolic periodic orbit homoclinically related with $\mu$.
		Suppose that $\nu$ is an ergodic hyperbolic measure. 
		If $\nu({\rm BH}(\mu))=1$, then for $\nu$-almost every $x$ one has 
		$$W^u(x) \pitchfork  W^s(\mathcal{O})\neq \emptyset,~W^s(x) \pitchfork  W^u(\mathcal{O})\neq \emptyset,$$
		which implies that $\nu \overset{h}{\sim} \mathcal{O}$, and hence $\nu \overset{h}{\sim} \mu$.
		This proves Item (1).
		
		Item (2) follows form \cite[Corollary 6.7]{BCS22}. 
		Since $h_{\nu}(f)>\frac{\lambda_{\min}(f)}{r}$,  $h_{\mu}(f)>\frac{\lambda_{\min}(f)}{r}$ and $\mu \overset{h}{\sim} \mathcal{O}$, \cite[Corollary 6.7]{BCS22} shows that $\nu({\rm TH}(\mu))=1$ if and only if $\nu \overset{h}{\sim} \mathcal{O}$, and hence if and only if $\mu \overset{h}{\sim} \nu$.
	\end{proof}
\end{Lemma}

Buzzi-Crovisier-Sarig \cite[Theorem 5.2]{BCS22} proved the finiteness of the number of Borel homoclinic classes for ergodic measures of a $C^{r}$ surface diffeomorphism with large entropy:
\begin{Theorem}\label{Thm:FiniteBH}
Let $f:M\rightarrow M$ be a $C^{r},r\in (1,+\infty]$ surface diffeomorphism. 
Then, for any $\delta>0$,
$$\# \left\{{\rm BH}(\mu): \mu ~\text{is an ergodic measure of }~f~\text{with}~h_{\mu}(f)>\delta+\frac{\lambda_{\min}(f)}{r} \right\}<+\infty.$$
\end{Theorem}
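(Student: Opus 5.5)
The statement is Theorem~\ref{Thm:FiniteBH}, which is \cite[Theorem 5.2]{BCS22}. In this paper it can simply be cited. Still, here is how I would prove it directly.

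\textbf{Step 1 (uniform Pesin blocks).} Fix $\delta>0$ and consider ergodic $\mu$ with $h_\mu(f)>\delta+\lambda_{\min}(f)/r$. By Ruelle's inequality both exponents satisfy $|\lambda^\pm(\mu)|\ge h_\mu(f)>\delta$, so every such $\mu$ is hyperbolic with exponents outside $[-\delta,\delta]$. I would argue by contradiction: suppose there are infinitely many distinct Borel homoclinic classes, represented by ergodic measures $\mu_n$ that are pairwise not homoclinically related. By Lemma~\ref{Lem:BTHC}(1) these classes are pairwise disjoint.

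\textbf{Step 2 (compactness).} Pass to a subsequence with $\mu_n\to\mu$ weak-$*$. The key input is upper semicontinuity of entropy defects from Yomdin/Burguet--type estimates. For $C^r$ surface maps, the entropy lost in the limit is controlled by $\lambda_{\min}(f)/r$ (Buzzi's entropy structure estimate for $C^r$ maps). Applying this gives $h_\mu(f)\ge \limsup h_{\mu_n}(f)-\lambda_{\min}(f)/r>\delta'$ on some component. More precisely, I would use the estimate that the unstable-direction entropy survives: there are uniform $\chi,\ell$ such that $\mu_n({\rm PES}_\ell^{\chi,\varepsilon}(f))\ge \gamma>0$ for all $n$. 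This is exactly the step where the threshold $\lambda_{\min}/r$ enters, through Yomdin reparametrization of unstable curves. It guarantees that a definite proportion of $\mu_n$-typical points have unstable and stable manifolds of uniform size $\rho(\ell)$ with angles bounded below.

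\textbf{Step 3 (transverse intersections).} Pick points $x_n$ in the Pesin block, $\mu_n$-typical with recurrence. By compactness, two of them, $x_n$ and $x_m$, are arbitrarily close. Uniform-size stable and unstable manifolds, with angles bounded below, then intersect transversely: $W^u(x_n)\pitchfork W^s(x_m)\neq\emptyset$ and vice versa. This has to hold on positive-measure sets, so I would take neighbourhoods inside the blocks. It gives $\mu_n\preceq\mu_m$ and $\mu_m\preceq\mu_n$, hence $\mu_n\overset{h}{\sim}\mu_m$, which contradicts distinctness.

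\textbf{Main obstacle.} The hard step is Step 2: obtaining uniform Pesin blocks with uniform mass from entropy alone, because exponents can degenerate along the sequence. I would try the \cite{BCS22} approach: a dynamical Sard/Yomdin argument showing that a large-entropy measure has many unstable curves of uniform length $\ge\rho$ carrying positive measure, with the exponent lower bound coming from $h>\lambda_{\min}/r+\delta$. Then, instead of needing $\mu$ itself to be ergodic, I would apply Step 3's transversality argument using unstable curves for one measure and stable curves for the other, obtained symmetrically by applying the same reasoning to $f^{-1}$.
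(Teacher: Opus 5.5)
Your opening remark matches the paper. The paper also just invokes \cite[Theorem 5.2]{BCS22}, in the form ``a sequence of pairwise non-homoclinically-related ergodic measures has $\limsup_n h_{\mu_n}(f)\le\lambda_{\min}(f)/r$''. The only argument it adds is the contradiction you begin in Step~1: infinitely many classes give, via Lemma~\ref{Lem:BTHC}(1), such a sequence with $h_{\mu_n}(f)>\delta+\lambda_{\min}(f)/r$. The direct proof you then sketch, however, has a genuine gap.

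\textbf{The gap is Step~2.} Yomdin/Burguet-type estimates give, at best, a set of uniformly positive $\mu_n$-measure of points whose unstable manifolds have uniform length. Applied to $f^{-1}$, they give a possibly different set of points with long stable manifolds. Membership in a fixed ${\rm PES}_\ell^{\chi,\varepsilon}(f)$ requires much more:
\begin{itemize}
\item simultaneous backward contraction on $E^u$ and forward contraction on $E^s$, with tempered constants along the whole orbit;
\item above all, a uniform lower bound on $\angle(E^u,E^s)$.
\end{itemize}
Step~3 uses exactly this angle bound and the continuity of local manifolds on one common block.

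A bound $\mu_n({\rm PES}_\ell^{\chi,\varepsilon}(f))\ge\gamma$ for every ergodic measure above the entropy threshold is an SPR-type statement. By the pigeonhole argument in the proof of Theorem~\ref{Thm: B}, it gives finiteness at once, so Step~2 already contains the whole theorem. Nothing in the paper supplies it. The SPR property (Theorem~\ref{Thm: SPR}) is proved downstream of Theorem~\ref{Thm:FiniteBH}, through Corollary~\ref{Cor: FTB}, Theorem~\ref{Thm: ErgodicCr} and continuity of exponents. It also only concerns sequences whose pressure tends to $P(\phi)$, not all measures with $h_\mu(f)>\delta+\lambda_{\min}(f)/r$.

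\textbf{Your fallback does not close the gap either.} Suppose $x_n$ carries an unstable arc of length $\rho$ and a nearby $x_m$ carries a stable arc of length $\rho$, with no control on the angle between them.
\begin{itemize}
\item The arcs need not meet at all: nearly parallel arcs through points at distance $\ll\rho$ can be disjoint.
\item If they do meet, the intersection may be a tangency, which does not give $\mu_n\preceq\mu_m$.
\end{itemize}
The missing ingredients are the ones the paper's footnote points to.
\begin{itemize}
\item A topological forcing argument with $su$-quadrilaterals, i.e.\ small discs bounded by stable and unstable arcs attached to typical points of one measure. A long unstable arc of the other measure starting inside such a disc cannot cross its unstable sides, since distinct unstable manifolds are disjoint. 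So it must cross a stable side.
\item A dynamical Sard lemma, where the $C^r$ threshold $\lambda_{\min}(f)/r$ is used to make these crossings transverse for typical points.
\end{itemize}
These, together with the Yomdin-type size estimate, are precisely what the paper imports from \cite{BCS22} rather than reproving.
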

\begin{proof}
	Suppose the conclusion fails.
	Then, by Lemma \ref{Lem:BTHC} there exists a sequence of ergodic measures $\{\mu_n \}_{n>0}$ such that $\mu_n$ not homoclinically related with $\mu_m$ for every $n\neq m$, and  
	$h_{\mu_k}(f)>\frac{\lambda_{\min}(f)}{r}+\delta$ for every $k>0$.	
	However, by \cite[Theorem 5.2]{BCS22}
	$$\limsup_{n\rightarrow +\infty} h_{\mu_n}(f)\leq \frac{\lambda_{\min}(f)}{r},$$
	which contradicts  $h_{\mu_n}(f)>\frac{\lambda_{\min}(f)}{r}+\delta$ for all $n>0$.
	This contradiction gives the proof.
\end{proof}

Since ${\rm TH}(\mu)=\overline{{\rm BH}(\mu)}$, we have the following corollary immediately.
\begin{Corollary} \label{Cor: FTB}
	Let $f:M\rightarrow M$ be a $C^{r},r\in (1,+\infty]$ surface diffeomorphism. 
	Then, for any $\delta>0$,
	$$\# \left\{{\rm TH}(\mu): \mu ~\text{is an ergodic measure of }~f~\text{with}~h_{\mu}(f)>\delta+\frac{\lambda_{\min}(f)}{r} \right\}<+\infty.$$
\end{Corollary}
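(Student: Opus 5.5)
The plan is to deduce Corollary \ref{Cor: FTB} from Theorem \ref{Thm:FiniteBH} by showing that the topological homoclinic class is a function of the Borel homoclinic class. Fix $\delta>0$ and let $\mathcal{E}_\delta$ denote the set of ergodic measures $\mu$ with $h_{\mu}(f)>\delta+\frac{\lambda_{\min}(f)}{r}$. Every such $\mu$ has positive entropy, so by Ruelle's inequality it is hyperbolic on a surface. Hence ${\rm BH}(\mu)$ and ${\rm TH}(\mu)$ are defined, via a hyperbolic periodic orbit $\mathcal{O}$ homoclinically related with $\mu$, which exists by Katok's shadowing lemma.

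First, I will check that these classes do not depend on the choice of $\mathcal{O}$. If $\mathcal{O}$ and $\mathcal{O}'$ are both homoclinically related with $\mu$, then $\mathcal{O}\overset{h}{\sim}\mathcal{O}'$, because homoclinic relation is an equivalence relation by \cite[Proposition 2.11]{BCS22}. Applying the inclination lemma, ${\rm BH}(\mathcal{O})={\rm BH}(\mathcal{O}')$ and ${\rm TH}(\mathcal{O})={\rm TH}(\mathcal{O}')$. Thus the maps $\mu\mapsto{\rm BH}(\mu)$ and $\mu\mapsto{\rm TH}(\mu)$ are well defined.

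Second, I will use the identity ${\rm TH}(\mu)=\overline{{\rm BH}(\mu)}$ recorded just before Lemma \ref{Lem:BTHC}. It shows that if ${\rm BH}(\mu)={\rm BH}(\mu')$, then ${\rm TH}(\mu)={\rm TH}(\mu')$. Therefore the assignment ${\rm BH}(\mu)\mapsto \overline{{\rm BH}(\mu)}={\rm TH}(\mu)$ defines a surjection
$$\{{\rm BH}(\mu):\mu\in\mathcal{E}_\delta\}\longrightarrow\{{\rm TH}(\mu):\mu\in\mathcal{E}_\delta\}.$$

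Third, the domain of this surjection is finite by Theorem \ref{Thm:FiniteBH}. The image is therefore finite, which is exactly the claim of Corollary \ref{Cor: FTB}.

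The only point needing care is the well-definedness in the first step, and it is already implicit in the paper's definitions. If I prefer not to rely on it, I can argue directly. Suppose the corollary fails. Then there are infinitely many $\mu_n\in\mathcal{E}_\delta$ with pairwise distinct topological classes ${\rm TH}(\mu_n)$. These measures must also have pairwise distinct Borel classes, since equal Borel classes give equal closures. This contradicts Theorem \ref{Thm:FiniteBH}.
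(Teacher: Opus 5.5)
Your proof is correct and is essentially the paper's own argument. The paper deduces the corollary in one line from Theorem \ref{Thm:FiniteBH} and the identity ${\rm TH}(\mu)=\overline{{\rm BH}(\mu)}$, so the topological classes are the images of finitely many Borel classes under taking closure. Your extra checks (hyperbolicity via Ruelle's inequality, and independence from the choice of $\mathcal{O}$) only make explicit what the paper leaves implicit in its definitions.
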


\subsection{Ergodicity of equilibrium states} \label{SEC:Erg}
Let $\phi: M \to \mathbb{R}$ be a continuous function.
Recall the variance of $\phi$ is defined by ${\rm Var}( \phi ):= \sup\big\{|\phi(x)-\phi(y)|:~x\in M,~y\in M \big\}$ and $P_{\mu}(\phi):= h_{\mu}(f)+ \int \phi ~{\rm d}\mu$.

\begin{Theorem}\label{Thm: Ergodic}
		Let $f:M\rightarrow M$ be a $C^{\infty}$ surface diffeomorphism with $h_{\rm top}(f)>0$.
		Suppose that $\phi$ is a H\"{o}lder continuous function with ${\rm Var}( \phi )< h_{\rm top}(f)$. 
		Then, for any sequence of ergodic measures $\{\mu_n\}_{n>0}$ with $P_{\mu_n}(\phi) \rightarrow P(\phi)$ and $\mu_n \rightarrow \mu$, one has that $\mu$ is ergodic.
		\end{Theorem}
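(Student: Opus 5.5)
Since $f$ is $C^\infty$, Newhouse's upper semi-continuity of the entropy map gives $h_\mu(f)\ge\limsup_n h_{\mu_n}(f)$, so $P_\mu(\phi)\ge\lim_n P_{\mu_n}(\phi)=P(\phi)$ and $\mu$ is an equilibrium state of $\phi$. Because ${\rm Var}(\phi)<h_{\rm top}(f)$, every equilibrium state $\nu$ satisfies
$$h_\nu(f)\ge P(\phi)-\sup\phi\ge h_{\rm top}(f)+\inf\phi-\sup\phi=h_{\rm top}(f)-{\rm Var}(\phi)=:2\delta>0.$$
The same bound holds asymptotically for the $\mu_n$. Hence, for $n$ large, $h_{\mu_n}(f)>\delta$, and almost every ergodic component of $\mu$ has entropy $\ge 2\delta$. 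On a surface, Ruelle's inequality then makes all these measures hyperbolic. Since $r=\infty$, we have $\lambda_{\min}(f)/r=0$, so Lemma \ref{Lem:BTHC}(2) and Corollary \ref{Cor: FTB} apply with threshold $\delta$. Thus there are only finitely many topological homoclinic classes ${\rm TH}_1,\dots,{\rm TH}_N$ carrying such measures, and each such ergodic measure gives full mass to its own class and lies in a single homoclinic relation class.

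Passing to a subsequence, I may assume all $\mu_n$ are homoclinically related to one fixed periodic orbit $\mathcal O$. Then $\mu_n({\rm TH}(\mathcal O))=1$, and since ${\rm TH}(\mathcal O)$ is compact, $\mu({\rm TH}(\mathcal O))=1$. Write $\mu=\int\mu_x\,d\mu(x)$ for the ergodic decomposition. Each component $\mu_x$ is an ergodic equilibrium state with entropy $\ge 2\delta$ and gives full mass to ${\rm TH}(\mathcal O)$. By Lemma \ref{Lem:BTHC}(2), $\mu_x\overset{h}{\sim}\mathcal O$ for a.e. $x$. So all ergodic components of $\mu$ lie in the single Borel homoclinic class ${\rm BH}(\mathcal O)$ and are equilibrium states there.

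It remains to show that an irreducible Borel homoclinic class carries at most one ergodic equilibrium state of the Hölder potential $\phi$. For this I use the Sarig / Buzzi--Crovisier--Sarig coding: there is an irreducible countable Markov shift $\Sigma$ with a Hölder-continuous, finite-to-one factor map $\pi$. Every ergodic hyperbolic measure homoclinically related to $\mathcal O$, in particular every $\mu_x$, lifts to $\Sigma$ with the same entropy. The lifted potential $\phi\circ\pi$ is Hölder on $\Sigma$ with finite Gurevich pressure. By the Buzzi--Sarig uniqueness theorem, an irreducible countable Markov shift has at most one equilibrium state for such a potential. Hence all $\mu_x$ coincide, and $\mu$ is ergodic.

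The main obstacle is this last step. One must justify that the equilibrium states project and lift correctly, namely that the pressure on $\Sigma$ equals $P(\phi)$ and that a lift of each $\mu_x$ really is an equilibrium state on $\Sigma$. Doing so requires the known facts from \cite{BCS22} (their Corollary 3.3 and the uniqueness argument behind Corollary 1.4). The hypothesis ${\rm Var}(\phi)<h_{\rm top}(f)$ is used precisely to keep the entropy of all relevant measures uniformly positive, which is what makes the finiteness and coding results available.
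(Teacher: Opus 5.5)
Your argument is correct and follows essentially the same route as the paper. It uses upper semi-continuity of entropy and the bound $h_\nu(f)\ge h_{\rm top}(f)-{\rm Var}(\phi)$, then finiteness of topological homoclinic classes (Corollary~\ref{Cor: FTB}) to put a subsequence, and hence $\mu$, on a single ${\rm TH}(\mathcal O)$, then Lemma~\ref{Lem:BTHC}(2) to make all ergodic components homoclinically related; apply that lemma against some $\mu_n$, since $\mu_{\mathcal O}$ has zero entropy. The only difference is the last step: the paper simply cites \cite[Corollary 3.3]{BCS22} (Lemma~\ref{Lem: HES}) for uniqueness of hyperbolic equilibrium states in a Borel homoclinic class, while you sketch its proof via the irreducible coding and Buzzi--Sarig uniqueness, which is legitimate because Ruelle's inequality makes all components uniformly $\chi$-hyperbolic for $\chi<2\delta$ (so one coding lifts them all) and $P_\Sigma(\phi\circ\pi)\le P(\phi)$ forces the lifts to be equilibrium states.
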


Remark that in the setting of Theorem \ref{Thm: Ergodic}, by the upper semi-continuity of the entropy map, we know that $\mu$ is an equilibrium state of $\phi$. 
\begin{Lemma} \label{Lem: EH}
	Let $f:M\rightarrow M$ be a $C^{r},r\in (1,+\infty]$ surface diffeomorphism with $h_{\rm top}(f)>\frac{\lambda_{\min}(f)}{r}$. 
	Suppose that  $\phi$ is a continuous function with ${\rm Var}( \phi )< h_{\rm top}(f)-\frac{\lambda_{\min}(f)}{r}$.  
	Then, there is $\delta>0$ such that for every equilibrium state $\mu$ of $\phi$, one has $h_{\mu}(f)>\delta+\frac{\lambda_{\min}(f)}{r}$.
\end{Lemma}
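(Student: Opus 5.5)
The argument compares the pressure of an equilibrium state with the pressure of a measure of nearly maximal entropy. Since $\phi$ is continuous on a compact manifold, $\sup\phi-\inf\phi={\rm Var}(\phi)$.

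First, we bound $P(\phi)$ from below. By the variational principle for entropy, for every $\eta>0$ there is an invariant measure $\nu$ with $h_\nu(f)>h_{\rm top}(f)-\eta$. (For $C^\infty$ maps one may even take $\eta=0$ by Newhouse, but this is not needed.) Then
$$P(\phi)\ge h_\nu(f)+\int\phi\,{\rm d}\nu\ge h_{\rm top}(f)-\eta+\inf\phi .$$
Since $\eta$ is arbitrary, $P(\phi)\ge h_{\rm top}(f)+\inf\phi$.

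Next, let $\mu$ be any equilibrium state of $\phi$. Then
$$h_\mu(f)=P(\phi)-\int\phi\,{\rm d}\mu\ge h_{\rm top}(f)+\inf\phi-\sup\phi=h_{\rm top}(f)-{\rm Var}(\phi).$$
Now set
$$\delta:=\tfrac12\Big(h_{\rm top}(f)-{\rm Var}(\phi)-\tfrac{\lambda_{\min}(f)}{r}\Big).$$
This is strictly positive by the hypothesis ${\rm Var}(\phi)<h_{\rm top}(f)-\lambda_{\min}(f)/r$. It follows that $h_\mu(f)\ge 2\delta+\lambda_{\min}(f)/r>\delta+\lambda_{\min}(f)/r$.

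The constant $\delta$ depends only on $f$ and $\phi$, not on $\mu$, so the bound is uniform over all equilibrium states, as the lemma requires. In the case $r=\infty$ we use the convention $\lambda_{\min}(f)/\infty=0$, and the same computation applies verbatim.

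There is no serious obstacle. The only points to check are that $\lambda_{\min}(f)$ is finite, which holds because $f$ is a diffeomorphism of a compact manifold, and the harmless choice of $\eta$ in the first step. Nothing is lost whether or not an equilibrium state exists, since the lemma's claim is only about equilibrium states that do exist.
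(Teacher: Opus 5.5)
Your proof is correct and is the paper's argument: compare $\mu$ with a measure of nearly maximal entropy, bound the difference of the integrals of $\phi$ by ${\rm Var}(\phi)$, and take $\delta=\frac12\big(h_{\rm top}(f)-{\rm Var}(\phi)-\frac{\lambda_{\min}(f)}{r}\big)$. The only cosmetic difference is that you let the approximation error $\eta\to 0$ to get $h_\mu(f)\ge h_{\rm top}(f)-{\rm Var}(\phi)$, whereas the paper fixes one small $\varepsilon<\delta$ to get the strict inequality directly.
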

\begin{proof}
	Let $\mu$ be a equilibrium state  of $\phi$. 
	For $\varepsilon\in \Big(0,\frac{1}{2}\big(h_{\rm top}(f)-\frac{\lambda_{\min}(f)}{r}-{\rm Var}(\phi) \big) \Big)$, take an $f$-invariant measure $\nu$ such that $h_{\nu}(f)\geq h_{\rm top}(f)-\varepsilon$.
	Then, we have 
	\begin{align*}
		h_{\mu}(f) + \int \phi ~{\rm d}\mu 
		&\geq  h_{\nu}(f) + \int \phi~ {\rm d}\nu  \\
		&\geq  h_{\rm top}(f)-\varepsilon + \int \phi~ {\rm d}\nu.
	\end{align*}
	Hence, one has
	\begin{align*}
		h_{\mu}(f)&\geq  h_{\rm top}(f)-\varepsilon-\big(\int \phi ~{\rm d}\mu-\int \phi ~{\rm d}\nu \big) \\
		&\geq h_{\rm top}(f)-\varepsilon-{\rm Var}(\phi) \\
		&\geq \frac{\lambda_{\min}(f)}{r}+\frac{1}{2}\Big(h_{\rm top}(f)-\frac{\lambda_{\min}(f)}{r}-{\rm Var}(\phi) \Big) \\
		:&=\frac{\lambda_{\min}(f)}{r}+\delta>0.
	\end{align*}
	This completes the proof of the Lemma.
\end{proof}	

The following lemma shows the uniqueness of hyperbolic ergodic equilibrium states supported on the Borel homoclinic class.

\begin{Lemma}[\cite{BCS22}, Corollary 3.3] \label{Lem: HES}
		Let $f:M\rightarrow M$ be a $C^{r},r>1$ surface diffeomorphism. 
		Suppose that  $\phi$ is a H\"{o}lder continuous function and $\mu$ is an ergodic, hyperbolic equilibrium state of $\phi$. 
		Then, for every ergodic, hyperbolic equilibrium state $\nu$ of $\phi$, $\nu( {\rm BH}(\mu) )=1$ if and only if $\nu=\mu$. 
\end{Lemma}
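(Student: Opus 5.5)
The converse direction is immediate: since $\mu \overset{h}{\sim} \mu$, Lemma \ref{Lem:BTHC}(1) gives $\mu({\rm BH}(\mu))=1$. For the nontrivial direction, assume $\nu({\rm BH}(\mu))=1$; by Lemma \ref{Lem:BTHC}(1) this means $\nu \overset{h}{\sim}\mu$. The plan is to reduce to a uniqueness statement for equilibrium states on a single irreducible countable Markov shift. This is the route of \cite{BCS22}, via Sarig's symbolic coding and the Buzzi--Sarig uniqueness theorem.

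\textbf{Step 1 (coding).} Fix $\chi>0$ smaller than the absolute values of all Lyapunov exponents of $\mu$ and $\nu$. Sarig's coding (and its surface refinement in \cite{BCS22}) provides a locally compact countable Markov shift $(\Sigma,\sigma)$ and a H\"older map $\pi:\Sigma\to M$ with $\pi\circ\sigma=f\circ\pi$. The map $\pi$ is finite-to-one on the regular part, and every $\chi$-hyperbolic ergodic measure lifts to an ergodic $\sigma$-invariant measure of equal entropy. The key input is the statement from \cite{BCS22} that the homoclinic relation corresponds to irreducibility. Concretely, there is one irreducible component $\Sigma'\subset\Sigma$ such that every $\chi$-hyperbolic ergodic measure carried by ${\rm BH}(\mu)$ lifts to a measure on $\Sigma'$. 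Hence $\mu$ and $\nu$ lift to ergodic measures $\hat\mu,\hat\nu$ on $\Sigma'$.

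\textbf{Step 2 (potential and pressure).} Set $\hat\phi:=\phi\circ\pi$. It is H\"older continuous (in fact summable variations) on $\Sigma'$, since $\pi$ is H\"older and $\phi$ is H\"older, and it is bounded. Every invariant probability on $\Sigma'$ projects to an $f$-invariant measure of the same entropy, because the coding is finite-to-one on recurrent points. Therefore $P_{\rm Gur}(\hat\phi)\le P(\phi)$, computed through the variational principle on $\Sigma'$. On the other hand $P_{\hat\mu}(\hat\phi)=P_\mu(\phi)=P(\phi)$, so equality holds, and both $\hat\mu$ and $\hat\nu$ are equilibrium states of $\hat\phi$ on $\Sigma'$ with finite pressure.

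\textbf{Step 3 (uniqueness).} Apply the Buzzi--Sarig theorem: a potential with summable variations and finite Gurevich pressure on a topologically transitive countable Markov shift has at most one equilibrium state. This gives $\hat\mu=\hat\nu$, hence $\mu=\pi_*\hat\mu=\pi_*\hat\nu=\nu$.

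The main obstacle is Step 1. One must ensure that \emph{both} lifts sit in the \emph{same} irreducible component, and that the lifting preserves entropy. This is exactly where the homoclinic relation enters. Transverse intersections $W^u(x)\pitchfork W^s(y)$ for typical points give connecting paths in the Markov graph, via a shadowing and chain argument in the construction of the coding, so that $\hat\mu$ and $\hat\nu$ charge the same irreducible class. I would take this correspondence directly from \cite{BCS22}, and treat the pressure comparison and the uniqueness theorem as standard.
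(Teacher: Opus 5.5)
Your proposal is correct and reproduces the argument behind \cite[Corollary 3.3]{BCS22}, which the paper cites without proof: Lemma~\ref{Lem:BTHC}(1) turns the hypothesis into $\nu \overset{h}{\sim} \mu$; the irreducible coding of the homoclinic class then lifts both measures, with entropy preserved, to equilibrium states of $\phi\circ\pi$; and Buzzi--Sarig uniqueness finishes the proof. The one point you gloss over is routine: the Buzzi--Sarig theorem is stated for one-sided shifts, so you should first replace $\phi\circ\pi$ by a cohomologous one-sided potential (Sinai's lemma).
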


\begin{Corollary}\label{Cor:Finte}
	Let $f:M\rightarrow M$ be a $C^{r},r\in (1,+\infty]$ surface diffeomorphism with $h_{\rm top}(f)>\frac{\lambda_{\min}(f)}{r}$. 
	Suppose that  $\phi$ is a  H\"{o}lder continuous function with ${\rm Var}( \phi )< h_{\rm top}(f)-\frac{\lambda_{\min}(f)}{r}$.  
	Then, there exist at most finitely many ergodic equilibrium states.
\end{Corollary}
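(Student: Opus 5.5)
The plan is to combine Lemma \ref{Lem: EH}, Theorem \ref{Thm:FiniteBH} and Lemma \ref{Lem: HES}. Applying Lemma \ref{Lem: EH} under the hypotheses of the corollary gives a $\delta>0$ such that every equilibrium state $\mu$ of $\phi$ satisfies $h_{\mu}(f)>\delta+\frac{\lambda_{\min}(f)}{r}$. In particular, every ergodic equilibrium state has entropy strictly larger than $\frac{\lambda_{\min}(f)}{r}\ge 0$. Since Theorem \ref{Thm:FiniteBH} is stated for all ergodic measures with entropy above $\delta+\frac{\lambda_{\min}(f)}{r}$, we get at once that the ergodic equilibrium states of $\phi$ lie in only finitely many Borel homoclinic classes ${\rm BH}(\mu)$.

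Before this we must check that every ergodic equilibrium state $\mu$ is hyperbolic; otherwise ${\rm BH}(\mu)$ is not defined and Lemma \ref{Lem: HES} does not apply. We use the Ruelle inequality for $f$ and $f^{-1}$ on a surface: $h_\mu(f)\le \max(\lambda^+(\mu),0)$ and $h_\mu(f)\le \max(-\lambda^-(\mu),0)$. Since $h_\mu(f)>0$, these give $\lambda^+(\mu)\ge h_\mu(f)>0>-h_\mu(f)\ge\lambda^-(\mu)$. So $\mu$ is hyperbolic, and both its exponents are bounded away from zero by $\delta$. Ergodicity lets us use the exponents directly rather than integrated versions.

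The last step is a counting argument. Suppose $\mu,\nu$ are ergodic equilibrium states with ${\rm BH}(\mu)={\rm BH}(\nu)$. Since $\nu$ is ergodic and hyperbolic, $\nu({\rm NUH}^{\#}(f))=1$. By Lemma \ref{Lem:BTHC}(1) applied to $\nu$ and itself (homoclinic relation is reflexive), $\nu({\rm BH}(\nu))=1$, so $\nu({\rm BH}(\mu))=1$. Lemma \ref{Lem: HES} then forces $\nu=\mu$. Thus the map $\mu\mapsto{\rm BH}(\mu)$ is injective on ergodic equilibrium states, and its image is finite. This gives finiteness.

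I do not expect a real obstacle, since the substantive work is in the cited finiteness and uniqueness results. The only delicate point is the hyperbolicity step. It needs $r>1$ to apply the Ruelle inequality, which holds for any $C^1$ map, and it needs positive entropy, which Lemma \ref{Lem: EH} supplies because $\frac{\lambda_{\min}(f)}{r}\ge 0$.
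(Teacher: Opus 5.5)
Your proof is correct and is essentially the paper's argument. The paper states that the corollary follows immediately from Lemma~\ref{Lem: EH}, Theorem~\ref{Thm:FiniteBH} and Lemma~\ref{Lem: HES}; you have made explicit the implicit steps, namely hyperbolicity of positive-entropy ergodic measures via Ruelle's inequality, and $\nu({\rm BH}(\nu))=1$ via Lemma~\ref{Lem:BTHC}(1), which together give injectivity of $\mu\mapsto{\rm BH}(\mu)$ (one small wording slip: Ruelle's inequality needs only $C^1$, and the hypothesis $r>1$ is what Pesin theory, the Borel homoclinic classes and Lemma~\ref{Lem: HES} require).
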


Corollary \ref{Cor:Finte} follows from Theorem \ref{Thm:FiniteBH}, Lemma  \ref{Lem: EH} and Lemma \ref{Lem: HES} immediately.
We now give the proof of Theorem \ref{Thm: Ergodic}.
\begin{proof}[Proof of Theorem \ref{Thm: Ergodic}]
	Suppose that $\{\mu_n\}_{n>0}$ is a sequence of ergodic measures satisfies that 
	$P_{\mu_n}(\phi) \rightarrow P(\phi)$ and $\mu_n \rightarrow \mu$. 
	We claim that $\lim\limits_{n\rightarrow +\infty} h_{\mu_n}(f)=h_{\mu}(f)$.
	Indeed, by the continuity of $\phi$ we have
	$\lim\limits_{n\rightarrow +\infty} \int \phi ~{\rm d}\mu_n= \int \phi ~{\rm d}\mu$,
	and by assumption we have 
	$$\lim_{n\rightarrow +\infty} h_{\mu_n}(f)+\int \phi ~{\rm d}\mu_n= h_{\mu}(f)+\int \phi ~{\rm d}\mu.$$
	Hence the claim follows.
	By Lemma \ref{Lem: EH}, there exists $\delta>0$ such that $h_{\mu}(f)>\delta$.
	Hence, for $n$ sufficiently large we also have $h_{\mu_n}(f)>\delta$.
	By Corollary \ref{Cor: FTB}, one has 
	$$ \# \left\{{\rm TH}(\mu): \mu ~\text{is an ergodic measure of }~f~\text{with}~h_{\mu}(f)>\delta \right\}<+\infty.$$
	Without loss of generality, by passing to a subsequence, we may assume that  $h_{\mu_n}(f)>\delta$ and ${\rm TH}(\mu_n)={\rm TH}(\mu_1)$ for every $n>0$.
	Since ${\rm TH}(\mu_1)$ is a compact $f$-invariant set and $\mu_n({\rm TH}(\mu_1))=1$ for every $n>0$, it follows that $\mu( {\rm TH}(\mu_1))=1$.
	
	Suppose that $\nu$ is an ergodic component of $\mu$ which supported on ${\rm TH}(\mu_1)$ and is an ergodic equilibrium state of $\phi$. 
	We now prove that almost every ergodic component of $\mu$ coincides with $\nu$, and hence $\mu = \nu$ is an ergodic equilibrium state of $\phi$.
	
	Note that almost every ergodic component $\nu'$ of $\mu$ is supported on ${\rm TH}(\mu_1)$ and is an ergodic equilibrium state of $\phi$.
	Moreover, by Lemma \ref{Lem: EH} we have  $h_{\nu'}(f)>\delta$.
    By Lemma \ref{Lem:BTHC}, since $h_{\nu}(f)>0$, $h_{\nu'}(f)>0$, $\nu({\rm TH}(\mu_1))=1$ and  $\nu'({\rm TH}(\mu_1))=1$, it follows that $\nu' \overset{h}{\sim} \nu$.
	Hence, by Lemma \ref{Lem: HES}, we have $\nu=\nu'$.
	Therefore, we prove that almost every ergodic component of $\mu$ equal to $\nu$.
	This completes the proof of Theorem \ref{Thm: Ergodic}.
\end{proof}

\subsection{Continuity of Lyapunov exponents for  equilibrium states} \label{SEC:CofL}
Let $f$ be a $C^{\infty}$ surface diffeomorphism,
by \cite[Theorem A]{BCS22I}, if a sequence of ergodic measures $\{\mu_n\}$ satisfies $\mu_n \rightarrow \mu$, where $\mu$ is an ergodic measure and $h_{\mu_n}(f) \rightarrow h_{\mu}(f)>0$, then we have $\lambda^{+}( \mu_n) \rightarrow \lambda^{+}(\mu)$.
As a consequence, we have the following corollary of Theorem \ref{Thm: Ergodic}, which completes the proof of Theorem \ref{Thm: Erg}. 
\begin{Corollary}
	In the setting of Theorem \ref{Thm: Ergodic}, for every sequence of ergodic measures $\{\mu_n\}_{n>0}$ with $P_{\mu_n}(\phi) \rightarrow P(\phi)$ and $\mu_n \rightarrow \mu$, one has $\mu$ is an ergodic equilibrium state of $\phi$ for $f$, and $\lambda^{+}(\mu_n)\rightarrow \lambda^{+}(\mu)$.
\end{Corollary}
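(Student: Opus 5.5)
The plan is to combine Theorem \ref{Thm: Ergodic} with the continuity result \cite[Theorem A]{BCS22I} quoted just above. Take a sequence of ergodic measures $\{\mu_n\}_{n>0}$ with $P_{\mu_n}(\phi)\to P(\phi)$ and $\mu_n\to\mu$.

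The first step is to show that $\mu$ is an ergodic equilibrium state. Because $f$ is $C^{\infty}$, the entropy map is upper semi-continuous by \cite{New89}. Since $\phi$ is continuous, $\int\phi\,{\rm d}\mu_n\to\int\phi\,{\rm d}\mu$. Hence
$$P_{\mu}(\phi)\geq \limsup_{n} P_{\mu_n}(\phi)=P(\phi),$$
so $\mu$ is an equilibrium state. Theorem \ref{Thm: Ergodic} then gives that $\mu$ is ergodic.

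The second step is to check the two hypotheses of \cite[Theorem A]{BCS22I}. As in the proof of Theorem \ref{Thm: Ergodic}, subtracting the convergent integrals from the convergent pressures yields $h_{\mu_n}(f)\to h_{\mu}(f)$. Next I apply Lemma \ref{Lem: EH} with $r=\infty$, using the convention $\lambda_{\min}(f)/\infty=0$. The hypothesis ${\rm Var}(\phi)<h_{\rm top}(f)$ is exactly what that lemma requires, so it provides $\delta>0$ with $h_{\mu}(f)>\delta>0$. Thus $\mu_n\to\mu$, $\mu$ is ergodic, and $h_{\mu_n}(f)\to h_{\mu}(f)>0$. By \cite[Theorem A]{BCS22I} we conclude $\lambda^{+}(\mu_n)\to\lambda^{+}(\mu)$.

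No step should be a real obstacle, since all the substantial work is in Theorem \ref{Thm: Ergodic} and in \cite{BCS22I}. The only points needing care are that the convergence of entropies uses both the hypothesis $P_{\mu_n}(\phi)\to P(\phi)$ and the fact that $\mu$ is an equilibrium state, and that positivity of $h_\mu(f)$ is needed for \cite{BCS22I}; Lemma \ref{Lem: EH} supplies the latter.
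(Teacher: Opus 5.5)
Your proposal is correct and follows essentially the same route as the paper. Upper semi-continuity of entropy makes $\mu$ an equilibrium state, and Theorem~\ref{Thm: Ergodic} makes it ergodic. Pressure convergence together with Lemma~\ref{Lem: EH} (with $r=\infty$) gives $h_{\mu_n}(f)\to h_\mu(f)>0$, and \cite[Theorem A]{BCS22I} then yields $\lambda^{+}(\mu_n)\to\lambda^{+}(\mu)$. The only difference is that you spell out the entropy convergence and its positivity, which the paper takes implicitly from the proof of Theorem~\ref{Thm: Ergodic}.
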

In this section, we discuss the $C^r$ case of Theorem \ref{Thm: Erg}. 
We will prove the following theorem:
\begin{Theorem}\label{Thm: ErgodicCr}
	Let $f:M\rightarrow M$ be a $C^{r},r>1$ surface diffeomorphism with $h_{\rm top}(f)>\frac{\lambda_{\min}(f)}{r}$.
	Suppose that $\phi$ is a H\"{o}lder continuous function with ${\rm Var}( \phi )< h_{\rm top}(f)-\frac{\lambda_{\min}(f)}{r}$. 
	Then, for any sequence of ergodic measures $\{\mu_n\}_{n>0}$ with $P_{\mu_n}(\phi) \rightarrow P(\phi)$ and $\mu_n \rightarrow \mu$, one has $\mu$ is an equilibrium state of $\phi$ for $f$; $\mu$ is ergodic and $\lambda^{+}(\mu_n)\rightarrow \lambda^{+}(\mu)$.
\end{Theorem}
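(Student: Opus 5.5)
We follow the proof of Theorem \ref{Thm: Ergodic} almost verbatim, replacing the $C^\infty$ threshold $0$ by $\frac{\lambda_{\min}(f)}{r}$. The one new ingredient is the continuity of Lyapunov exponents in finite regularity.

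\emph{Step 1: entropy convergence and the limit is an equilibrium state.} Since $f$ is only $C^r$, upper semi-continuity of entropy is not available globally. Instead we use the Buzzi--Crovisier--Sarig entropy defect estimate in finite smoothness \cite{BCS22I}: for $\mu_n\to\mu$, the defect $\limsup h_{\mu_n}(f)-h_\mu(f)$ is bounded by a quantity at most $\frac{\lambda_{\min}(f)}{r}$, and more precisely by $\frac{\lambda^+(\mu_n)-\lambda^+(\mu)}{r-1}$-type bounds. We argue by contradiction. If $h_\mu(f)<\limsup h_{\mu_n}(f)$, then $P_\mu(\phi)<P(\phi)$. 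By Lemma \ref{Lem: EH} applied along the $\mu_n$ (the argument there only uses $P_{\mu_n}(\phi)\approx P(\phi)$), we get $h_{\mu_n}(f)>\delta+\frac{\lambda_{\min}(f)}{r}$ for large $n$.

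The BCS22I statement we would invoke says: if $\limsup h_{\mu_n}>\frac{\lambda_{\min}}{r}$, then the limit has a component of positive entropy carrying a definite fraction. More precisely, $\mu=(1-\beta)\mu_0+\beta\mu_1$ with $\beta h_{\mu_1}\ge \limsup h_{\mu_n}-\frac{\lambda_{\min}}{r}$ and $\lambda^+$ converging on the hyperbolic part. We then combine this with the variance bound. Writing $\mu$ as this convex combination, the deficit in pressure forces a mass $(1-\beta)$ on a part with zero entropy. Comparing with ${\rm Var}(\phi)<h_{\rm top}(f)-\frac{\lambda_{\min}(f)}{r}$ shows that $\beta=1$ and $h_{\mu}=\lim h_{\mu_n}$. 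Hence $\mu$ is an equilibrium state. This is the main obstacle: in the $C^\infty$ proof this step was free, and here it requires the quantitative semi-continuity from \cite{BCS22I} together with the strict inequality on ${\rm Var}(\phi)$.

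\emph{Step 2: ergodicity.} With $\mu$ an equilibrium state, Lemma \ref{Lem: EH} gives $h_{\nu'}(f)>\delta+\frac{\lambda_{\min}(f)}{r}$ for almost every ergodic component $\nu'$, and likewise for the $\mu_n$. Corollary \ref{Cor: FTB} lets us pass to a subsequence with ${\rm TH}(\mu_n)={\rm TH}(\mu_1)$. By compactness, $\mu({\rm TH}(\mu_1))=1$. Components above the threshold $\frac{\lambda_{\min}(f)}{r}$ are hyperbolic (Ruelle's inequality), so Lemma \ref{Lem:BTHC}(2) applies. It shows that all such components are homoclinically related to each other. Lemma \ref{Lem: HES} then forces them to coincide, so $\mu$ is ergodic.

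\emph{Step 3: exponents.} Since $\mu$ is ergodic with $h_{\mu_n}(f)\to h_\mu(f)>\frac{\lambda_{\min}(f)}{r}$, the $C^r$ version of \cite[Theorem A]{BCS22I} yields $\lambda^+(\mu_n)\to\lambda^+(\mu)$. That version gives continuity of $\lambda^+$ when the entropy converges above $\lambda_{\min}/r$. Alternatively, the convex decomposition from Step 1 with $\beta=1$ already encodes convergence of $\lambda^+$.
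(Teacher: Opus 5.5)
Your Step~2 (ergodicity via Corollary~\ref{Cor: FTB}, Lemma~\ref{Lem:BTHC}(2), Lemma~\ref{Lem: EH} and Lemma~\ref{Lem: HES}) is the paper's argument. Your fallback in Step~3, reading off $\lambda^+(\mu_n)\to\lambda^+(\mu)$ from the decomposition once $\beta=1$, is also what the paper does.

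\textbf{The gap is in Step~1.} This is the only genuinely new point in finite smoothness, and your inequality is too weak to close it. You state $\beta h_{\mu_1}(f)\ge\limsup_n h_{\mu_n}(f)-\frac{\lambda_{\min}(f)}{r}$. This charges the full defect $\frac{\lambda_{\min}(f)}{r}$ regardless of $\beta$, and then the pressure comparison gives only
\[
P(\phi)\le \beta P(\phi)+(1-\beta)\int\phi\,{\rm d}\mu_0+\frac{\lambda_{\min}(f)}{r},\quad\text{i.e.}\quad (1-\beta)\Big(P(\phi)-\int\phi\,{\rm d}\mu_0\Big)\le\frac{\lambda_{\min}(f)}{r}.
\]
All you know is $P(\phi)-\int\phi\,{\rm d}\mu_0\ge h_{\rm top}(f)-{\rm Var}(\phi)>\frac{\lambda_{\min}(f)}{r}$, so this yields $\beta>0$, not $\beta=1$. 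Also, nothing forces the mass $1-\beta$ onto a zero-entropy part; only its entropy contribution is controlled.

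\textbf{What the paper uses instead} is the sharper statement in which the defect is weighted by the escaping mass.
\begin{itemize}
\item First reduce to $\lambda_{\min}(f)=\lambda^+(f)$, replacing $f$ by $f^{-1}$ otherwise.
\item Apply Burguet's theorem \cite{Bur24P} to the unstable lifts $\hat\mu_n^+$ on $\PP TM$. It gives $\hat\mu^+_n\to\beta\hat\nu_1^++(1-\beta)\hat\nu_0$ and $\lim h_{\mu_n}(f)\le\beta h_{\nu_1}(f)+(1-\beta)C$ for any $C>\lambda^+(f)/r$.
\item Choose $C<\frac{\lambda^+(f)}{r}+\delta$, where $\delta$ comes from the bound $h_{\rm top}(f)-{\rm Var}(\phi)>\frac{\lambda^+(f)}{r}+\delta$ of Lemma~\ref{Lem: EH}. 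Then $P(\phi)\le\beta P(\phi)+(1-\beta)\big(\int\phi\,{\rm d}\nu_0+C\big)$.
\item By the variational principle, $\int\phi\,{\rm d}\nu_0+C<h_{\rm top}(f)-{\rm Var}(\phi)+\int\phi\,{\rm d}\nu_0\le P(\phi)$.
\item This forces $\beta=1$, $\mu=\nu_1$ and $P_\mu(\phi)=P(\phi)$. It also gives $\lambda^+(\mu_n)\to\lambda^+(\mu)$, because $\log\|D_xf|_E\|$ is continuous on $\PP TM$.
\end{itemize}

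\textbf{The source you cite also has the wrong constant.} The $(1-\beta)$-weighted estimate in \cite{BCS22I} has constant $\lambda^+(f)/(r-1)$, not $\lambda^+(f)/r$. With it you could only handle ${\rm Var}(\phi)<h_{\rm top}(f)-\lambda^+(f)/(r-1)$. The sharp $1/r$ of \cite{Bur24P} is therefore essential for the stated hypothesis. The same objection applies to your Step~3 appeal to a ``$C^r$ version'' of \cite[Theorem A]{BCS22I} at threshold $\lambda_{\min}(f)/r$.
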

\begin{proof}
	Without loss of generality, we may assume that 
	$$\lambda_{\min}(f)=\lambda^+(f):= \lim_{n\rightarrow +\infty} \frac{1}{n} \log \sup_{x\in M} \|D_xf^n\|,$$
	otherwise, it suffices to consider $f^{-1}$.
	Assume that $\{\mu_n\}_{n>0}$ is a sequence of ergodic measures with $P_{\mu_n}(\phi) \rightarrow P(\phi)$ and $\mu_n \rightarrow \mu$, we first show that $\mu$ is an equilibrium state of $\phi$.
	As in the proof of Lemma \ref{Lem: EH}, there exists $\delta>0$ such that  
	$$\lim_{n\rightarrow \infty} h_{\mu_n}(f)=P(\phi)-\int \phi~{\rm d}\mu\geq h_{\rm top}(f)-{\rm Var}(\phi)>\frac{\lambda^{+}(f)}{r}+\delta>0.$$
	Therefore, by Ruelle's inequality we can assume that $\mu_n$ is an ergodic hyperbolic measure for every $n>0$.
	For each $n>0$, denote by $\hat{\mu}^{+}_n:= \int \delta_{(x,E^u(x))}~{\rm d}\mu_n(x)$ the lift of $\mu_n$ to $\mathbb{P}TM$, which is supported on unstable bundle.
	Note that $\lambda^{+}(\mu_n)=\int \log \|D_xf|_{E} \| ~{\rm d}\hat{\mu}^{+}_n(x,E)$.
	
	Take $\frac{\lambda^{+}(f)}{r}<C<\frac{\lambda^{+}(f)}{r}+\delta$, by \cite[Main Theorem]{Bur24P}, there exists $\beta\in [0,1]$ such that 
	$$\hat{\mu}^{+}_n \rightarrow \hat{\mu}= \beta \hat{\nu}_1^{+}+(1-\beta) \hat{\nu}_0,~\text{and}~\lim_{n\rightarrow \infty} h_{\mu_n}(f)\leq \beta h_{\nu_1}(f)+(1-\beta) \cdot C,$$
    where $\nu_1$ is a projection of $\hat{\nu}_1^{+}$, $\nu_0$ is a projection of $\hat{\nu}_0$ and $\lambda^{+}(\nu_1)=\int \log \|D_xf|_{E} \| ~{\rm d}\hat{\nu}^{+}_1(x,E)$.
    Therefore, we have
    \begin{equation}\label{eq:Esta}
    	P(\phi)=\lim_{n\rightarrow \infty} h_{\mu_n}(f) +\int \phi ~{\rm d}\mu\leq \beta\big(h_{\nu_1}(f) +\int \phi ~{\rm d}\nu_1\big)+(1-\beta) \big(\int \phi ~{\rm d}\nu_0+C\big).
    \end{equation}
    Note that for every $f$-invariant measure $m$, one has
    \begin{align*}
    	P(\phi) &\geq h_{m}(f)+\int \phi ~{\rm d}m \\
    	&=h_{m}(f)+\int \phi ~{\rm d}\nu_0+\int \phi ~{\rm d}m-\int \phi ~{\rm d}\nu_0 \\
    	&\geq h_{m}(f)+\int \phi ~{\rm d}\nu_0-{\rm Var}(\phi).
    \end{align*}
   Therefore, by variational  principle we have 
   $$P(\phi)\geq h_{\rm top}(f)+\int \phi ~{\rm d}\nu_0-{\rm Var}(\phi) > \dfrac{\lambda^{+}(f)}{r}+\delta +\int \phi ~{\rm d}\nu_0>C+\int \phi ~{\rm d}\nu_0.$$
   By $P(\phi) \geq h_{\nu_1}(f)+\int \phi ~{\rm d}\nu_1$ and \eqref{eq:Esta}, we have 
   $$P(\phi)\leq \beta \cdot P(\phi)+(1-\beta)\big(\int \phi ~{\rm d}\nu_0+C\big),~\text{and}~\big(\int \phi ~{\rm d}\nu_0+C\big)<P(\phi).$$
   Then we must have $\beta=1$, $\mu=\nu_1$ and $P(\phi)= h_{\mu}(f)+\int \phi ~{\rm d}\mu$.
   This proves that $\mu$ is an equilibrium state, and 
   $$\lim_{n\rightarrow \infty} \lambda^{+}(\mu_n)=\lim_{n\to \infty}\int \log \|D_xf|_{E} \| ~{\rm d}\hat{\mu}^{+}_n(x,E)= \int \log \|D_xf|_{E} \| ~{\rm d}\hat{\nu}^{+}_1(x,E)= \lambda^{+}(\nu_1)=\lambda^{+}(\mu).$$
   
   To complete the proof of the theorem, it remains to show that $\mu$ is an ergodic measure.
   This discussion is similar to the proof of Theorem \ref{Thm: Ergodic}.
   By Corollary \ref{Cor: FTB}, one has 
   $$ \# \left\{{\rm TH}(\mu): \mu ~\text{is an ergodic measure of }~f~\text{with}~h_{\mu}(f)>\frac{\lambda_{\min}(f)}{r}+\delta \right\}<+\infty.$$
   Without loss of generality, by passing to a subsequence, we may assume that  $h_{\mu_n}(f)>\delta+\frac{\lambda_{\min}(f)}{r}$ and ${\rm TH}(\mu_n)={\rm TH}(\mu_1)$ for every $n>0$. Therefore, we have $\mu({\rm TH}(\mu_1))=1$. 
   Suppose that $\nu$ is an ergodic component of $\mu$ which supported on ${\rm TH}(\mu_1)$ and is an ergodic equilibrium state of $\phi$.  By Lemma \ref{Lem: EH}, we have $h_{\nu}(f)>\delta+\frac{\lambda_{\min}(f)}{r}$.
   
   Since almost every ergodic component $\nu'$ of $\mu$ is supported on ${\rm TH}(\mu_1)$ and, again by Lemma \ref{Lem: EH}, $\nu'$ is an ergodic equilibrium state of $\phi$ with $h_{\nu'}(f)>\delta+\frac{\lambda_{\min}(f)}{r}$. 
  Therefore, by Lemma \ref{Lem:BTHC} and Lemma \ref{Lem: EH}, it follows from Lemma \ref{Lem:BTHC} and Lemma \ref{Lem: EH} that almost every ergodic component of $\mu$ equals $\nu$. Hence, $\mu$ is an ergodic measure of $f$.
\end{proof}

\subsection{Strong positive recurrence for potentials} \label{SEC:SPR}
For a continuous function $\phi: M \to \mathbb{R}$ and $\chi>0$, recall that $f$ is said to be $\chi$-SPR for $\phi$, if for any $\varepsilon>0$ and any sequence of ergodic measures $\{\mu_n\}_{n>0}$ with $P_{\mu_n}(\phi) \to P(\phi)$, one has
$$\lim\limits_{\ell \rightarrow \infty} \liminf\limits_{n\rightarrow \infty} \mu_n({\rm PES}_{\ell}^{\chi,\varepsilon}(f))=1.$$
Note that this definition equivalent to \cite[Defintion 4.14]{BCS25}: 
$$\forall \tau\in (0,1),~\exists \ell>0,~\exists P_0\in (0, P(\phi))~\text{s.t.}~\forall \mu \in \mathcal{M}_{e}(f),~P_{\mu}(\phi)>P_0 \Rightarrow \mu({\rm PES}_{\ell}^{\chi,\varepsilon}(f))>\tau.$$
In this subsection, we will establish the SPR property for a certain class of potentials.

\begin{Theorem}\label{Thm: SPR}
	Let $f:M\rightarrow M$ be a $C^{r},r\in (1,+\infty]$ surface diffeomorphism with $h_{\rm top}(f)>\frac{1}{r}\cdot \lambda_{\min}(f)$.
	Suppose that $\phi$ is a H\"{o}lder continuous function with ${\rm Var}( \phi )< h_{\rm top}(f)-\frac{1}{r}\cdot \lambda_{\min}(f)$. 
	Then, there exists $\chi>0$ such that $f$ is $\chi$-SPR for $\phi$.
\end{Theorem}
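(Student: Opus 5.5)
Assume the $C^r$ setting of Theorem \ref{Thm: SPR}. The plan is to take $\chi$ strictly below the smallest Lyapunov exponent among the finitely many ergodic equilibrium states, and to obtain SPR by contradiction, using Theorem \ref{Thm: ErgodicCr} together with a semicontinuity argument for Pesin blocks.

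\textbf{Step 1: choice of $\chi$.} By Corollary \ref{Cor:Finte} there are only finitely many ergodic equilibrium states $\mu_1,\dots,\mu_k$. By Lemma \ref{Lem: EH} each has $h_{\mu_i}(f)>\delta+\lambda_{\min}(f)/r>0$. Ruelle's inequality, applied to $f$ and to $f^{-1}$, then gives $\lambda^+(\mu_i)\ge h_{\mu_i}(f)>\delta$ and $-\lambda^-(\mu_i)>\delta$. So I set
\[
\chi:=\tfrac12\min\{\delta,\ |\lambda^{\pm}(\mu_i)|:\ i=1,\dots,k\}>0 .
\]

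\textbf{Step 2: reduction to a convergent sequence.} Fix $\varepsilon>0$ and suppose SPR fails. Then there are $\tau<1$ and ergodic measures $\nu_n$ with $P_{\nu_n}(\phi)\to P(\phi)$ and $\nu_n({\rm PES}^{\chi,\varepsilon}_\ell(f))\le\tau$ for all $\ell$ and all large $n$. A diagonal extraction keeps this property for every fixed $\ell$ along the subsequence, and we may also assume $\nu_n\to\mu$. By Theorem \ref{Thm: ErgodicCr}, $\mu$ is an ergodic equilibrium state, hence $\mu=\mu_i$ for some $i$, and $\lambda^+(\nu_n)\to\lambda^+(\mu)$. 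Applying the same theorem to $f^{-1}$ (same pressure and equilibrium states, with $\lambda_{\min}$ symmetric) gives $\lambda^-(\nu_n)\to\lambda^-(\mu)$.

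\textbf{Step 3: main obstacle, controlling Pesin blocks under weak-$*$ convergence.} Convergence of exponents and weak-$*$ convergence of $\nu_n$ alone do not give uniform mass on ${\rm PES}_\ell$. One also needs convergence of the lifts of the unstable and stable directions to $\mathbb{P}TM$, namely $\hat\nu_n^{\pm}\to\hat\mu^{\pm}$; this follows from the Burguet-type decomposition used in the proof of Theorem \ref{Thm: ErgodicCr}, with $\beta=1$. Given these lifted convergences and exponent continuity, I would invoke the criterion of \cite{BCS25} (their argument deriving SPR from continuity of exponents, Section 4): if $\hat\nu^\pm_n\to\hat\mu^\pm$ with $\lambda^{\pm}(\nu_n)\to\lambda^{\pm}(\mu)$ and $|\lambda^\pm(\mu)|>\chi$, then
\[
\lim_{\ell\to\infty}\liminf_{n\to\infty}\nu_n({\rm PES}^{\chi,\varepsilon}_\ell(f))=1 .
\]
If I had to prove this directly, I would use a Pliss-lemma and ergodic-average argument. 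Most points have most of their time-$N$ blocks of $\log\|Df^N|_{E^{u}}\|$ close to $N\lambda^+$; uniform integrability comes from the weak-$*$ convergence of the lifts, since $\log\|Df|_E\|$ is continuous on $\mathbb{P}TM$. Angle control also follows from the convergence of the lifts, since the angle function is continuous away from the diagonal, where the limit $\hat\mu$ puts no mass. This is the main obstacle.

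\textbf{Step 4: conclusion.} The bound in Step 3 contradicts $\nu_n({\rm PES}_\ell^{\chi,\varepsilon}(f))\le\tau<1$ for all $\ell$, so $f$ is $\chi$-SPR for $\phi$. In the $C^\infty$ case we have $\lambda_{\min}(f)/r=0$, so the hypotheses reduce to ${\rm Var}(\phi)<h_{\rm top}(f)$, and Theorem \ref{Thm: A} follows.
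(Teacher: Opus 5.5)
Your proposal follows essentially the paper's route: take $\chi$ below the entropy lower bound of Lemma \ref{Lem: EH} (so that Ruelle's inequality for $f$ and $f^{-1}$ puts the exponents of any limiting equilibrium state outside $[-\chi,\chi]$), use Theorem \ref{Thm: ErgodicCr} to get an ergodic equilibrium limit with continuity of exponents, and then run the argument of \cite[Theorem 3.1]{BCS25} (cf.\ \cite[Remark 3.5]{BCS25}) to obtain SPR. Your extra ingredients are harmless but not needed: finiteness via Corollary \ref{Cor:Finte}, and applying the theorem to $f^{-1}$ for $\lambda^-$, which on surfaces already follows from $\lambda^{+}+\lambda^{-}=\int\log|\det Df|\,{\rm d}\mu$.
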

\begin{proof}
	By Lemma \ref{Lem: EH} and Theorem \ref{Thm: ErgodicCr}, there exists $\chi>0$ such that for every sequence of ergodic measures $\{\mu_n\}_{n>0}$ with $\mu_n \to \mu$ and $P_{\mu_n}(\phi) \to P(\phi)$, we have $\lambda^{+}(\mu_n) \to \lambda^{+}(\mu)$. Moreover, since $\mu$ is ergodic and $h_{\mu}(f)>\chi$, it follows that for $\mu$-almost every $x$ one has $\lambda^{+}(x)>\chi>-\chi>\lambda^{-}(x)$, where $\lambda^{+}(x)$ and $\lambda^{-}(x)$ are positive and negative Lyapunov exponents of $x$. 
	
	With the facts established above, the proof of Theorem \ref{Thm: SPR} proceeds in essentially the same way as the proof of  \cite[Theorem 3.1]{BCS25}; see also the discussion in \cite[Remark 3.5]{BCS25}.
\end{proof}
Since Theorem \ref{Thm: A} corresponds to the case $r=\infty$ of Theorem \ref{Thm: SPR}, this completes the proof of Theorem \ref{Thm: A}.

\subsection{Strong positive recurrence on Borel homoclinic classes}\label{SEC:SPR-B}
We now introduce the notion of SPR on a homoclinic class, which was defined in \cite[Section 2.2]{BCS25}.
Recall the definition of ${\rm BH}(\mathcal{O})$ and ${\rm TH}(\mathcal{O})$ in Section \ref{SEC:HC}.
Suppose that $\phi:M \rightarrow \RR$ is a H\"{o}lder continuous function.
For each $f$-invariant measurable set $X$, we define the \textit{top-pressure} of $\phi$ in $X$ by 
$$P_{X}(\phi):= P_{X}(\phi,f)= \sup \Big\{ h_{\mu}(f)+ \int \phi~{\rm d}\mu: \mu\in \mathcal{M}(f)~\text{and}~\mu(X)=1 \Big\},$$
and we denote $h_{X}(f):=P_{X}(0)$.
Clearly, we have that $P_{M}(\phi)=P(\phi)$ and $h_{M}(f):=h_{\rm top}(f)$.
We call a measure $\mu\in \mathcal{M}(f)$ is an \textit{equilibrium state of $\phi$ on $X$}, if $\mu(X)=1$ and $P_{\mu}(\phi)=P_{X}(\phi)$.
For $\chi>0$, we call that \textit{$f$ is $\chi$-SPR for $\phi$ on an invariant measurable set $X$}, if for every $\varepsilon>0$ and every sequence of ergodic measures
$\{\mu_n\}_{n>0}$ with $\mu_n(X)=1~(\forall n>0)$ and $P_{\mu_n}(\phi) \rightarrow P_{X}(\phi)$, one has
$$\lim\limits_{\ell \rightarrow \infty} \liminf\limits_{n\rightarrow \infty} \mu_n({\rm PES}_{\ell}^{\chi,\varepsilon}(f))=1.$$
When we say that $f$ is $\chi$-SPR for $\phi$ without further specification, it is understood that $f$ is $\chi$-SPR for $\phi$ on $M$.

\begin{Theorem}
	Let $f:M\rightarrow M$ be a $C^{r}~,r\in (1,+\infty]$ surface diffeomorphism.
	For every Borel homoclinic class $X$ with $h_{\rm X}(f)>\frac{1}{r}\cdot \lambda_{\min}(f)$, if $~\phi:M \rightarrow \RR$ is a H\"{o}lder continuous function with ${\rm Var}(\phi|_{X} )< h_{X}(f)-\frac{1}{r}\cdot \lambda_{\min}(f)$. 
	Then, there exists $\chi>0$ such that $f$ is $\chi$-SPR for $\phi$ on $X$.
\end{Theorem}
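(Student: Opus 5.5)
We follow the same three-step scheme used for Theorem~\ref{Thm: SPR}, now localized to the Borel homoclinic class $X={\rm BH}(\mathcal{O})$. Let $\{\mu_n\}$ be ergodic with $\mu_n(X)=1$ and $P_{\mu_n}(\phi)\to P_X(\phi)$. Pass to a subsequence with $\mu_n\to\mu$.

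\textbf{Step 1: entropy gap.} We first prove the analogue of Lemma~\ref{Lem: EH} on $X$. For $\nu$ with $\nu(X)=1$ and $h_\nu(f)\ge h_X(f)-\varepsilon$, the inequality $P_{\mu_n}(\phi)\ge P_{\nu}(\phi)-o(1)$ gives
$$\lim_n h_{\mu_n}(f)\ge h_X(f)-\varepsilon-{\rm Var}(\phi|_X).$$
We only need ${\rm Var}(\phi|_X)$, since $\mu_n$ and $\nu$ are carried by $X$. Hence there is $\delta>0$ with $h_{\mu_n}(f)>\frac{\lambda_{\min}(f)}{r}+\delta$ for all large $n$. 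In particular each $\mu_n$ is hyperbolic by Ruelle's inequality, and by Lemma~\ref{Lem:BTHC}(1) each $\mu_n$ is homoclinically related to $\mathcal{O}$.

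\textbf{Step 2: the limit is an ergodic equilibrium state of $\phi$ on $X$.} We run the argument of Theorem~\ref{Thm: ErgodicCr}. Apply \cite[Main Theorem]{Bur24P} to the unstable lifts $\hat\mu_n^+$ to get $\hat\mu=\beta\hat\nu_1^++(1-\beta)\hat\nu_0$. We need the comparison $P_X(\phi)>C+\int\phi\,d\nu_0$. The subtle point is that $\nu_0$ and $\nu_1$ are supported on $\overline X={\rm TH}(\mathcal{O})$, not necessarily on $X$. However:
\begin{itemize}
\item $\phi$ is continuous, so $\int\phi\,d\nu_0\le \sup_{\overline X}\phi$, and ${\rm Var}(\phi|_{\overline X})={\rm Var}(\phi|_X)$.
\item The ergodic components $\nu'$ of $\nu_1$ with $h_{\nu'}(f)>\lambda_{\min}(f)/r$ are hyperbolic and supported on ${\rm TH}(\mathcal{O})$. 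By Lemma~\ref{Lem:BTHC}(2), applied with $\mu_n$ whose entropy is large, they are homoclinically related to $\mathcal{O}$, so $\nu'(X)=1$ by Lemma~\ref{Lem:BTHC}(1).
\end{itemize}
Hence $P_{\nu_1}(\phi)\le P_X(\phi)$, up to a term controlled by the low-entropy components, which we absorb in the same convex-combination estimate. This forces $\beta=1$, gives $P_\mu(\phi)=P_X(\phi)$, $\lambda^+(\mu_n)\to\lambda^+(\mu)$, and, after the above argument, $\mu(X)=1$.

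Ergodicity then follows as in the proof of Theorem~\ref{Thm: Ergodic}. Almost every ergodic component $\nu'$ of $\mu$ is an equilibrium state on $X$ with entropy greater than $\frac{\lambda_{\min}(f)}{r}+\delta$, hence homoclinically related to $\mathcal{O}$. By Lemma~\ref{Lem: HES}, all such components coincide. Lemma~\ref{Lem: HES} applies within a single Borel class; its proof via \cite[Corollary 3.3]{BCS22} uses only the coding of ${\rm BH}(\mathcal{O})$ by an irreducible Markov shift, so uniqueness of equilibrium states on $X$ holds.

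\textbf{Step 3: SPR.} Fix $\chi<\delta$. Since $\mu$ is ergodic with $h_\mu>\chi$, Ruelle's inequality gives $\mu$-a.e.\ exponents outside $[-\chi,\chi]$. With $\lambda^+(\mu_n)\to\lambda^+(\mu)$, the $\lambda^-$ convergence follows by applying the same argument to $f^{-1}$. The argument of \cite[Theorem 3.1]{BCS25}, which derives uniform Pesin-set mass from weak convergence to an ergodic hyperbolic limit together with convergence of exponents, then yields
$$\lim_\ell\liminf_n\mu_n({\rm PES}_\ell^{\chi,\varepsilon}(f))=1.$$

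\textbf{Main obstacle.} We expect Step 2 to be the hardest part: controlling the limit measures, which a priori live only on the closure $\overline X$, and showing they actually charge $X$. Lemma~\ref{Lem:BTHC}(2) is the key tool, but it must be combined carefully with the Burguet decomposition so that the low-entropy part $\nu_0$ (with arbitrary support in ${\rm TH}(\mathcal{O})$) is handled only through continuity of $\phi$.
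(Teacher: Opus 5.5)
Your proposal is correct and is essentially the paper's argument. The paper first checks, via Lemma~\ref{Lem:BTHC}, that $h_X(f)$ equals the supremum of entropies over measures carried by ${\rm TH}(\mathcal O)=\overline X$, and then reruns the Burguet-decomposition proof of Theorem~\ref{Thm: ErgodicCr} on that closure; you do the same thing directly, by bounding $\int\phi\,d\nu_0$ by $\sup_{\overline X}\phi$ and pushing the high-entropy components of $\nu_1$ back into $X$. Your hedge about the other components is unnecessary: components with $h_{\nu'}(f)\le\lambda_{\min}(f)/r$ satisfy $P_{\nu'}(\phi)\le\lambda_{\min}(f)/r+\sup_{\overline X}\phi<P_X(\phi)$, so $P_{\nu_1}(\phi)\le P_X(\phi)$ holds exactly.

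There is one slip, in Step~3. Convergence of $\lambda^-$ should come from $\lambda^+(\mu_n)+\lambda^-(\mu_n)=\int\log|\det Df|\,d\mu_n$ together with weak-$*$ convergence, not from rerunning the argument for $f^{-1}$. The variance hypothesis only gives the needed entropy gap for whichever of $f^{\pm1}$ realizes $\lambda_{\min}(f)$, so the $f^{-1}$ rerun may not be available.
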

\begin{proof}
Let $\mathcal{O}$ be a hyperbolic periodic orbit such that ${\rm BH}(\mathcal{O})=X$.
Recall that ${\rm TH}(\mathcal{O}) = \overline{{\rm BH}(\mathcal{O})}$. 

We first show that
$h_{X}(f):=\sup \{h_{\mu}(f):~\mu({\rm TH}(\mathcal{O}))=1 \}$.
Since every $\mu$ with $\mu(X)=1$ also satisfies $\mu({\rm TH}(\mathcal{O}))=1$, the inequality “$\leq$” is immediate.
On the other hand, since $h_{X}(f)>\frac{1}{r}\cdot \lambda_{\min}(f)$, there exists an ergodic measure $\mu_{0}$ with
$h_{\mu_0}(f)>\tfrac{1}{r}\cdot \lambda_{\min}(f)$ and  $\mu_0 \overset{h}{\sim} \mathcal{O}$.
For any measure $\nu$ with $\nu({\rm TH}(\mathcal{O}))=1$ and $h_{\nu}(f)>\tfrac{1}{r}\cdot \lambda_{\min}(f)$, Lemma \ref{Lem:BTHC} implies $\mu_0 \overset{h}{\sim} \nu$, hence $\nu(X)=1$.
This proves the inequality “$\geq$”.

Now, let $\{\mu_n\}_{n>0}$ be a sequence of ergodic measures on $X$ such that $\mu_n \to \mu$ and $P_{\mu_n}(\phi)\to P_X(\phi)$.
Arguing as in Theorem \ref{Thm: ErgodicCr}, we have that $h_{\mu}(f)>\tfrac{1}{r}\cdot \lambda_{\min}(f)$, $\lambda^+(\mu_n)\to \lambda^+(\mu)$, and that $\mu$ is ergodic with $\mu(X)=1$ since $\mu\overset{h}{\sim} \nu$ for some $\nu$ with $\nu(X)=1$ and $h_{\nu}(f)>\tfrac{1}{r}\cdot \lambda_{\min}(f)$.
Therefore, the SPR property follows.
\end{proof}

\section{Statistic properties of equilibrium states}
In this section, we assume throughout that $f: M \to M$ is a $C^{1+\alpha}$ surface diffeomorphism with positive entropy and $\phi: M \to \mathbb{R}$ is a H\"{o}lder continuous function. 
We enumerate the statistical properties implied by the SPR condition, similar results for measures of maximal entropy were obtained in \cite{BCS25}.

\subsection{Existence of equilibrium states}\label{SEC:SPR-E}
Recall the definition of SPR on a homoclinic class in Section \ref{SEC:SPR-B}.
The following theorem,  firstly proved in \cite{BCS25}, shows the existence of equilibrium states. 
Here, we present an alternative proof based on the homoclinic relation, without relying on the SPR property of Markov coding.
Note that in the statement, the set  $X$ is not assumed to be compact.
\begin{Theorem}\label{Thm: SPR-BH}
	Assume that $X\subset M$ is a Borel homoclinic class and $f$ is $\chi$-SPR for $\phi$  on $X$.
	Then, for every sequence of ergodic measures $\{\mu_n\}_{n>0}$ supported on $X$ with  $P_{\mu_n}(\phi) \rightarrow P_{X}(\phi)$ and $\mu_n \to \mu$, we have $\mu$ is ergodic, $\mu(X)=1$ and $P_{\mu}(\phi)=P_{X}(\phi)$. 
\end{Theorem}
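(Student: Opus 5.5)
We must show three things for the limit $\mu$: it is an equilibrium state on $X$, it gives full mass to $X$, and it is ergodic. The SPR hypothesis on $X$ is the only source of hyperbolic control, since no entropy bound is assumed. The plan has three steps.

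\emph{Step 1: extract a Pesin block of positive limit mass.} Fix $\varepsilon\ll\chi$. By the $\chi$-SPR property on $X$ there is $\ell$ with $\liminf_n \mu_n(\Lambda_\ell)>1-\tau$, where $\Lambda_\ell:={\rm PES}_{\ell}^{\chi,\varepsilon}(f)$ and $\tau$ is small. The Pesin block $\Lambda_\ell$ is compact, and the splitting and the local stable/unstable manifolds of uniform size vary continuously on it. Passing to the weak limit (upper semicontinuity on closed sets) gives $\mu(\Lambda_\ell)\geq 1-\tau$. Letting $\ell\to\infty$ yields $\mu(\bigcup_\ell \Lambda_\ell)=1$. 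Since every point of $\Lambda_\ell$ has exponents outside $(-\chi+\varepsilon,\chi-\varepsilon)$, $\mu$-a.e.\ ergodic component of $\mu$ is hyperbolic with exponents outside $(-\chi,\chi)$.

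\emph{Step 2: show that every ergodic component of $\mu$ is homoclinically related to $\mathcal{O}$, where $X={\rm BH}(\mathcal{O})$.} Fix a small ball $B$ of diameter much smaller than the size $r_\ell$ of the local manifolds on $\Lambda_\ell$, chosen with $\mu(B\cap\Lambda_\ell)>0$. Then $\mu_n(B\cap\Lambda_\ell)$ stays bounded below for large $n$. Since $\mu_n \overset{h}{\sim}\mathcal{O}$, $\mu_n$-a.e.\ point $y$ has $W^u(y)\pitchfork W^s(\mathcal{O})\neq\emptyset$ and $W^s(y)\pitchfork W^u(\mathcal{O})\neq\emptyset$. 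Because $\mu_n$ is ergodic and the local unstable manifolds of points in $B\cap\Lambda_\ell$ have uniform size, we can pick $y_n\in B\cap \Lambda_\ell$ with these intersections. By continuity of local manifolds on $\Lambda_\ell$, for any $x\in B\cap\Lambda_\ell$ the local manifold $W^u_{loc}(x)$ crosses $W^s_{loc}(y_n)$ transversally, and likewise with $u,s$ exchanged. Applying the inclination lemma ($\lambda$-lemma) to the transverse intersections $W^u(y_n)\pitchfork W^s(\mathcal{O})$ and $W^s(y_n)\pitchfork W^u(\mathcal{O})$, after pushing forward along the hyperbolic dynamics, gives $W^u(x)\pitchfork W^s(\mathcal{O})\neq\emptyset$ and $W^s(x)\pitchfork W^u(\mathcal{O})\neq\emptyset$. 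This is the standard quadrilateral/rectangle argument of \cite{BCS22}. We also need $x\in{\rm NUH}^\#(f)$, which holds $\mu$-a.e.\ by Poincar\'e recurrence to $\Lambda_\ell$. Hence $\mu$-a.e.\ point of $B\cap \Lambda_\ell$ lies in $X$. Covering $\Lambda_\ell$ by finitely many such balls, using invariance, and letting $\ell\to\infty$ gives $\mu(X)=1$. By Lemma~\ref{Lem:BTHC}(1), a.e.\ ergodic component of $\mu$ is then homoclinically related to $\mathcal{O}$.

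\emph{Step 3: equilibrium property and ergodicity.} Without an entropy bound we cannot use upper semicontinuity of entropy as in Theorem~\ref{Thm: Ergodic} in general. Instead, use the uniform hyperbolicity on $\Lambda_\ell$: on the sets where the $\mu_n$ spend proportion $>1-\tau$ of time, entropy is uniformly controlled at a fixed scale $r_\ell$ (expansivity-type control near Pesin blocks, as in \cite{BCS25}). This gives $\limsup h_{\mu_n}\leq h_\mu+o_\tau(1)$, hence $P_\mu(\phi)\geq P_X(\phi)$. Since $\mu(X)=1$, also $P_\mu(\phi)\le P_X(\phi)$, so $P_\mu(\phi)=P_X(\phi)$ and $\mu$ is an equilibrium state on $X$. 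Then a.e.\ ergodic component of $\mu$ is a hyperbolic ergodic equilibrium state carried by $X$, and any two are homoclinically related. Lemma~\ref{Lem: HES}, whose proof in \cite{BCS22} works verbatim for equilibrium states restricted to a Borel homoclinic class, forces them all to coincide, so $\mu$ is ergodic. The main obstacle I expect is this entropy semicontinuity in Step 3. I would try Katok-type entropy estimates on $(n,r_\ell)$-Bowen balls for orbits returning often to $\Lambda_\ell$, where the $C^{1+\alpha}$ Pesin charts give a uniform scale of expansivity.
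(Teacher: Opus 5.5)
Your overall architecture matches the paper's: Pesin-block mass passes to the limit, ergodic components are related to $\mathcal{O}$, the equilibrium property follows, and uniqueness on the class comes from \cite{BCS22}. The gap is that the step carrying the real weight, $\limsup_n h_{\mu_n}(f)\le h_\mu(f)$, is left unproved, and it cannot be waved through for a $C^{1+\alpha}$ diffeomorphism, where the entropy map is not upper semicontinuous in general.

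\textbf{Step 3: the entropy bound.} The proposed ``expansivity-type control at scale $r_\ell$'' is not available as stated. A $\mu_n$-typical orbit spends up to a fraction $\tau$ of its time in excursions away from $\Lambda_\ell$. During an excursion of length $m$ the only lower bound on the Pesin chart size is of order $\ell^{-1}e^{-\varepsilon m/2}$, which drops below any fixed scale. So Bowen balls at scale $r_\ell$ are uncontrolled there, and small-scale entropy can be produced. Bounding that contribution by $o_\tau(1)$ uniformly in $n$ is precisely the hard content, and your Katok-type sketch does not address it.

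The paper's route is different:
\begin{enumerate}
\item It first turns SPR into continuity of the unstable exponent (Proposition~\ref{Prop:CLE}). The function $J^u(x)=\log\|D_xf|_{E^u(x)}\|$ is continuous on the compact block $\Lambda_\ell$; extend it by Tietze to a continuous $\hat J$ on $M$ with $\|\hat J\|_{C^0}\le\log\|Df\|_{\sup}$. Since $\mu_n$ and $\mu$ both give $\Lambda_\ell$ mass $>1-\tau$, this yields $\limsup_n|\lambda^+(\mu_n)-\lambda^+(\mu)|\le 4\tau\log\|Df\|_{\sup}$.
\item It then invokes \cite[Corollary C]{LuY25}: continuity $\lambda^+(\mu_n)\to\lambda^+(\mu)$ forces $\limsup_n h_{\mu_n}(f)\le h_\mu(f)$.
\end{enumerate}
That external theorem is the missing ingredient. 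Without it, or a complete proof of your local-entropy estimate, you do not get $P_\mu(\phi)\ge P_X(\phi)$.

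\textbf{Step 2: the mass bookkeeping.} The claim that $\mu_n(B\cap\Lambda_\ell)$ stays bounded below does not follow from weak convergence. The set $B\cap\Lambda_\ell$ is not open. From $\liminf_n\mu_n(B)\ge\mu(B)$ and $\liminf_n\mu_n(\Lambda_\ell)\ge 1-\tau$ you only get $\mu_n(B\cap\Lambda_\ell)\ge\mu(B)-\tau-o(1)$, which is useless for balls much smaller than $r_\ell$. Moreover, the $\mu$-mass of $B\cap\Lambda_\ell$ may be a limit of $\mu_n$-mass carried by $\Lambda_{\ell'}\setminus\Lambda_\ell$, whose local manifolds are too short for your crossing argument.

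The paper avoids ball-by-ball reasoning by arguing by contradiction:
\begin{enumerate}
\item Suppose the set $E$ of points whose ergodic component gives $X$ zero mass has $\mu(E)=\tau>0$. Write $\mu=\tau\mu_*+(1-\tau)\mu_0$, with $\mu_*$ the normalized part coming from $E$.
\item Choose $\ell$ with $\mu_n(\Lambda_\ell)>1-\tau^2/4$ for large $n$. Then $\mu_*(\Lambda_\ell)>1-\tau/3$.
\item Cover $\Lambda_\ell$ ($\mu_*$-mod $0$) by finitely many $\delta_\ell$-balls, each charged by $\mu_*$. Their union has $\mu$-mass $>2\tau/3$, which beats the $\tau^2/4$ deficit.
\item Hence for large $n$ one of these balls is charged both by $\mu_n|_{\Lambda_\ell}$ and by $\mu_x|_{\Lambda_\ell}$ for some $x\in E$. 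The definition of $\preceq$ then gives $\mu_n\overset{h}{\sim}\mu_x$ directly, and transitivity gives $\mu_x\overset{h}{\sim}\mathcal{O}$, a contradiction. No pointwise inclination-lemma argument is needed.
\end{enumerate}
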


We will prove this conclusion in three steps.
In the first step, we show that $\lambda^{+}(\mu_n) \rightarrow \lambda^{+}(\mu)$, as a consequence of  \cite[Corollary C]{LuY25} we obtain $P_{\mu}(\phi)\geq P_{X}(\phi)$.
In the second step, we prove that almost every ergodic component of $\mu$ gives full measure on $X$, and hence $\mu(X)=1$ and $P_{\mu}(\phi)=P_{X}(\phi)$.
In the final step, we show that $\mu$ is ergodic.
   
\begin{Proposition}\label{Prop:CLE}
	Assume that $X\subset M$ is a Borel homoclinic class and $f$ is $\chi$-SPR for $\phi$  on $X$. 
	For every sequence of ergodic measures $\{\mu_n\}_{n>0}$ with $\mu_n(X)=1,~\forall n>0$, $P_{\mu_n}(\phi) \rightarrow P_{X}(\phi)$ and $\mu_n \to \mu$ as $n\to \infty$, we have $\lambda^{+}(\mu_n) \to \lambda^{+}(\mu)$ as $n\to \infty$.
\end{Proposition}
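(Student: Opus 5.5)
We pass to the unstable-direction lifts and use the SPR hypothesis to rule out any loss of Lyapunov exponent at the limit, via uniform estimates on Pesin sets.

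\emph{Setup.} Since $\mu_n(X)=1$, each $\mu_n$ is ergodic and hyperbolic, homoclinically related to $\mathcal{O}$, with a well-defined unstable direction $E^u$ a.e. Let $\hat\mu_n^+:=\int \delta_{(x,E^u(x))}\,{\rm d}\mu_n(x)$ on $\PP TM$, so that
$$\lambda^+(\mu_n)=\int \log\|D_xf|_E\|\,{\rm d}\hat\mu_n^+(x,E).$$
Since $\log\|D_xf|_E\|$ is continuous on $\PP TM$, it suffices to show that every weak-$*$ limit $\hat\mu$ of $\hat\mu_n^+$ along a subsequence satisfies $\int\log\|Df|_E\|\,{\rm d}\hat\mu=\lambda^+(\mu)$. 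Note that $\hat\mu$ projects to $\mu$ and is $\PP Df$-invariant.

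\emph{Main step: uniform Pesin control.} Fix $\varepsilon\ll\chi$ and $\tau\in(0,1)$. By the SPR hypothesis, there is $\ell$ with $\mu_n(\Lambda_\ell)>\tau$ for all large $n$, where $\Lambda_\ell:={\rm PES}_\ell^{\chi,\varepsilon}(f)$. On $\Lambda_\ell$ the splitting $E^u\oplus E^s$ depends continuously on the point, with angles bounded below by $\ell^{-1}$. Moreover $\Lambda_\ell$ is closed, hence compact, so the sets $\hat\Lambda_\ell=\{(x,E^u(x)):x\in\Lambda_\ell\}$ are compact and increase with $\ell$. Every limit $\hat\mu$ therefore gives mass at least $\tau$ to $\hat\Lambda_\ell$ (upper semicontinuity on closed sets).

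Points of $\Lambda_\ell$ are $\chi$-Pesin points for the limit. So on $\hat\Lambda_\ell$ the direction $E$ is the unstable direction of $x$, and it expands at rate at least $\chi-\varepsilon$ under $f^{n}$ after the uniform constant $\ell$. Letting $\tau\to1$ then shows that $\hat\mu$-almost every $(x,E)$ has $E=E^u(x)$, i.e.\ $\hat\mu$ is the unstable lift $\hat\mu^+$ of $\mu$. This requires checking that membership in $\hat\Lambda_\ell$ for $\hat\mu$-a.e.\ point (after taking $\tau\to1$ through a countable union) forces $\mu$-a.e.\ $x$ to have exponents outside $[-(\chi-\varepsilon),\chi-\varepsilon]$, with $E$ the Oseledets unstable space. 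I would prove this by invariance: $\hat\mu$ is $\PP Df$-invariant, and applying Birkhoff's theorem on the recurrent set $\hat\Lambda_\ell$ forces growth along $E$ at rate $\ge\chi-\varepsilon$. On a surface, any direction with positive growth and a transverse contracting direction must be the unstable one.

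\emph{Conclusion.} With $\hat\mu=\hat\mu^+$ we get
$$\lim\lambda^+(\mu_n)=\int\log\|Df|_E\|\,{\rm d}\hat\mu^+=\lambda^+(\mu).$$

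The main obstacle is the identification of the limit lift. The difficulty is that the mass of $\hat\mu_n^+$ outside the Pesin set could concentrate on non-unstable directions, as in the decomposition $\beta\hat\nu_1^++(1-\beta)\hat\nu_0$ of \cite{Bur24P}. SPR is exactly what forces $\beta=1$. Alternatively, I would invoke \cite{Bur24P} directly and show that its neutral part $\hat\nu_0$ must carry zero mass, since it cannot charge the sets $\hat\Lambda_\ell$, which have $\hat\mu$-mass tending to $1$.
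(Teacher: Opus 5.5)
Your argument is correct. It rests on the same two ingredients as the paper's proof: SPR puts mass close to $1$ on a compact block ${\rm PES}_{\ell}^{\chi,\varepsilon}(f)$, and $x\mapsto E^u(x)$ is continuous there. You package them differently, though.

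The paper stays on $M$. It extends $J^u|_{{\rm PES}_{\ell}^{\chi,\varepsilon}(f)}$ to a continuous $\hat J$ on $M$ and bounds the error off the block. This gives $\limsup_n|\lambda^+(\mu_n)-\lambda^+(\mu)|\le 4\tau\log\|Df\|_{\sup}$ directly, without identifying any limit object.

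You move to $\PP TM$, where $\log\|D_xf|_E\|$ is already globally continuous. You then show that every subsequential limit $\hat\mu$ of the unstable lifts gives full mass to the graph $\bigcup_\ell\hat\Lambda_\ell$ of $E^u$ over the Pesin blocks, hence equals $\hat\mu^+$. The price is a subsequence argument. The gain is the stronger statement $\hat\mu_n^+\to\hat\mu^+$. This yields convergence of $\int\Phi(x,E^u(x))\,{\rm d}\mu_n$ for every continuous $\Phi$ on $\PP TM$, for instance the quasi-H\"older observables used later in the paper.

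The Birkhoff/invariance step you sketch, and the alternative via \cite{Bur24P}, are unnecessary. For $x\in{\rm PES}_{\ell}^{\chi,\varepsilon}(f)$ the definition already gives $\|D_xf^{-n}|_{E^u(x)}\|\le \ell e^{-(\chi-\varepsilon)n}$ for all $n>0$. So at every Oseledets-regular such $x$, the line $E^u(x)$ is the Oseledets unstable direction; no recurrence argument is needed.

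This identification is all you need, and you use it implicitly twice. First, at the start, it gives $\hat\mu_n^+(\hat\Lambda_\ell)=\mu_n(\Lambda_\ell)$, hence $\hat\mu(\hat\Lambda_\ell)\ge\limsup_n\mu_n(\Lambda_\ell)$ by compactness of $\hat\Lambda_\ell$. Second, at the end, it gives $\int\log\|Df|_E\|\,{\rm d}\hat\mu=\int J^u\,{\rm d}\mu=\lambda^+(\mu)$.
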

\begin{proof}   
   For each $\tau>0$, by $\chi$-SPR property there are $\chi>0, ~0< \varepsilon \ll \chi, ~\ell>0$ and $N>0$ such that 
   $$\mu_n({\rm PES}_{\ell}^{\chi,\varepsilon}(f))>1-\tau,~\forall n>N. $$
   Since the set ${\rm PES}_{\ell}^{\chi,\varepsilon}(f)$ is compact, one has $\mu({\rm PES}_{\ell}^{\chi,\varepsilon}(f))>1-\tau$.
   Since $\tau>0$ is arbitrary, we have $\mu(\bigcup_{\ell>0}{\rm PES}_{\ell}^{\chi,\varepsilon}(f))=1$, which implies $\mu$ is  $(\chi-\varepsilon)$-hyperbolic.
   
   Note that the geometric potential $J^u(x):=\log \| D_x f|_{E^u(x)} \|$ is continuous on ${\rm PES}_{\ell}^{\chi,\varepsilon}(f)$.
   Therefore, we can choose a continuous function $\hat{J}: M\rightarrow \RR$ such that 
   $\| \hat{J} \|_{C^0} \leq \log \| Df \|_{\sup}$ and $\hat{J}(x)=J^u(x)$ for every $x\in {\rm PES}_{\ell}^{\chi,\varepsilon}(X)$, where $ \| Df \|_{\sup}:=\sup_{x\in M} \|D_xf\|$.
   Therefore, we have 
   \begin{align*}
   	\limsup_{n\rightarrow \infty} |\lambda^{+}(\mu_n)-\lambda^{+}(\mu)|&= \limsup_{n\rightarrow \infty} \Big| \int  J^u(x) {\rm d} \mu_n(x)- \int  J^u(x) {\rm d} \mu(x) \Big| \\
   	&\leq \lim_{n\rightarrow \infty} \Big| \int  \hat{J}~{\rm d} \mu_n- \int  \hat{J}~{\rm d} \mu \Big|+4\tau \cdot \log  \|Df\|_{\sup}
   	=4\tau \cdot \log \|Df\|_{\sup}.
  \end{align*}
  Since $\tau>0$ is arbitrary, we have $\lambda^{+}(\mu_n) \rightarrow \lambda^{+}(\mu)$. 
\end{proof}

\begin{proof}[Proof of Theorem \ref{Thm: SPR-BH}] 
  {\it Step 1.}  
  By the continuity of $\phi$, we have
  $$\lim_{n\rightarrow \infty} \int \phi ~{\rm d}\mu_n= \int \phi ~{\rm d}\mu.$$
  To show that $P_{\mu}(\phi)\geq P_{X}(\phi)$, it remains to establish the upper semi-continuity of the entropy.
  By \cite[Corollary C]{LuY25},  if $\lambda^{+}(\mu_n) \rightarrow \lambda^{+}(\mu)$, then we have 
  $$\limsup\limits_{n\rightarrow +\infty} h_{\mu_n}(f)\leq h_{\mu}(f).$$
  Since Proposition \ref{Prop:CLE} ensures that $\lambda^{+}(\mu_n) \rightarrow \lambda^{+}(\mu)$, the desired inequality $P_{\mu}(\phi)\geq P_{X}(\phi)$ follows.
 
   \medskip
   
  \noindent {\it Step 2.} 
  Let $\mathcal{O}$ be a hyperbolic periodic orbit such that ${\rm BH}(\mathcal{O})=X$. 
  From the first step, we know that $\mu$ is a hyperbolic measure.
  We now prove that almost every ergodic component $\nu$ of $\mu$ satisfies $\nu \overset{h}{\sim} \mathcal{O}$.
  Consider the ergodic decomposition of $\mu=\int \mu_x ~{\rm d}\mu(x)$, where $\mu_x=\lim\limits_{n\rightarrow \infty} \frac{1}{n} \sum_{j=0}^{n-1} \delta_{f^j(x)}  $ whenever the limit exists. Let 
  $$E:=\{x: \mu_x ~\text{is well defined and}~\mu_x({\rm BH}(\mathcal{O}))=0\}.$$

  It suffices to prove that $\mu(E)=0$.
  Suppose, for contradiction, that $\mu(E)=\tau>0$.
  Since $E$ is an $f$-invariant measurable subset, we can decompose $\mu=\tau  \mu_{\ast}+(1-\tau)\mu_0$, where $\mu_{\ast}:= \frac{1}{\tau} \int_{E} \mu_x ~{\rm d}\mu(x)$ is an $f$-invariant probability measure. 
  By $\chi$-SPR property, there exist $\ell>0$ and $N>0$ such that 
  $$\mu_n ({\rm PES}_{\ell}^{\chi,\varepsilon}(f))>1-\frac{\tau^2}{4},~~\forall n>N.$$
  It follows that $\mu({\rm PES}_{\ell}^{\chi,\varepsilon}(f))>1-\frac{\tau^2}{3}$. 
  Hence, we have 
  $$\mu_{\ast}({\rm PES}_{\ell}^{\chi,\varepsilon}(f))>\frac{1}{\tau} \left( \mu({\rm PES}_{\ell}^{\chi,\varepsilon}(f))-(1-\tau)\mu_0({\rm PES}_{\ell}^{\chi,\varepsilon}(f)) \right)>\frac{1}{\tau} \left(1-\frac{\tau^2}{3}-1+\tau \right)=1-\frac{\tau}{3}.$$
  
  Take $\delta_{\ell}>0$ such that for every $x_1,x_2\in {\rm PES}_{\ell}^{\chi,\varepsilon}(f)$ with $d(x_1,x_2)\leq 2\delta_{\ell}$, one has $W^u_{\rm loc}(x_1) \pitchfork W^s_{\rm loc}(x_2) \neq \emptyset $ and 
  $W^s_{\rm loc}(x_1) \pitchfork W^u_{\rm loc}(x_2) \neq \emptyset $.
  Choose a finite set $\{z_1,\cdots, z_k\} \subset {\rm PES}_{\ell}^{\chi,\varepsilon}(f)$ such that 
  $$\mu_{\ast}\Big({\rm PES}_{\ell}^{\chi,\varepsilon}(f) - \bigcup_{j=1}^{k} B(z_j,\delta_{\ell})\Big)=0
  ~\text{and}~\mu_{\ast}\big(B(z_i,\delta_{\ell})\cap {\rm PES}_{\ell}^{\chi,\varepsilon}(f)\big)>0,~\forall 1\leq i \leq k.$$
  Then, we have 
  $$\mu\big( \bigcup_{j=1}^{k} B(z_j,\delta_{\ell})\big)\geq \tau \cdot \mu_{\ast}({\rm PES}_{\ell}^{\chi,\varepsilon}(f))>\tau-\frac{\tau^2}{3}>\frac{2\tau}{3}.$$
  By enlarging  $N>0$ if necessary, we may assume that 
  $$\mu_n \big( \bigcup_{j=1}^{k} B(z_j,\delta_{\ell}) \big)>\frac{\tau}{2},~~\forall n>N.$$
  Fix some $n>N$. Then, $\mu_n \big( \bigcup_{j=1}^{k} B(z_j,\delta_{\ell})\cap  {\rm PES}_{\ell}^{\chi,\varepsilon}(f) \big)>0$, so there exists $i\in \{1,\cdots,k\}$ such that 
  $$\mu_n\big( B(z_i,\delta_{\ell})\cap  {\rm PES}_{\ell}^{\chi,\varepsilon}(f) \big)>0.$$
  Since $\mu_{\ast}(B(z_i,\delta_{\ell})\cap {\rm PES}_{\ell}^{\chi,\varepsilon}(f))>0$, there is $x\in E$ such that $\mu_{x}({\rm PES}_{\ell}^{\chi,\varepsilon}(f)\cap B(z_i,\delta_{\ell}))>0$. 
  Now, for every point $y_1,y_2\in B(z_i,\delta_{\ell})\cap {\rm PES}_{\ell}^{\chi,\varepsilon}(f)$, we have
  $W^u_{\rm loc}(y_1) \pitchfork W^s_{\rm loc}(y_2) \neq \emptyset $ and 
  $W^s_{\rm loc}(y_1) \pitchfork W^u_{\rm loc}(y_2) \neq \emptyset $. 
  This implies that $\mu_n \overset{h}{\sim} \mu_x$, and consequently $\mu_x \overset{h}{\sim} \mathcal{O}$.
  However, this contradicts the assumption that $\mu_x( {\rm BH}(\mathcal{O}))=0$ for every $x\in E$.
  Therefore, we must have $\mu(E)=0$.  
  
  Since $\mu(E)=0$, it follows that for $\mu$-almost every $x$ one has $\mu_x({\rm BH}(\mathcal{O}))=1$.
  Consequently, $\mu (X)=1$. 
  By the definition of $P_X(\phi)$, one has $P_{\mu}(\phi)\leq P_X(\phi)$. 
  Together with the conclusion from the first step, this yields $P_{\mu}(\phi)=P_X(\phi)$ and $P_{\mu_x}(\phi)=P_X(\phi)$ for $\mu$-almost every $x$. 
  
  \medskip
  
  \noindent {\it Step 3.} 
  By \cite[Section 3.5]{BCS22}, if $\nu$ is an ergodic hyperbolic equilibrium state of $\phi$ on $X$, then every ergodic hyperbolic equilibrium state $\nu’$ for $\phi$ on $X$ coincides with $\nu$.
  From the first and second steps, we know that $\mu$ is a hyperbolic invariant equilibrium state of $\phi$ on $X$, and that almost every ergodic component of $\mu$ is an ergodic hyperbolic equilibrium state of $\phi$ on $X$.
  It follows that almost all ergodic components of $\mu$ coincide.
  Therefore, $\mu$ itself is an ergodic hyperbolic equilibrium state of $\phi$.
 \end{proof}

 \begin{proof}[Proof of Theorem \ref{Thm: B}]
 	We now assume that $f: M\rightarrow M$ is a $C^{1+\alpha}$ surface diffeomorphism, and that $\phi: M \to \mathbb{R}$ is a H\"{o}lder continuous function such that there exists $\chi>0$ for which $f$ is $\chi$-SPR for $\phi$. 
 	Then, there exist $\tau>0$, $\ell>0$ and $P_0\in (0,P(\phi))$ such that for every ergodic measure $\mu$, if $P_{\mu}(\phi)>P_0$, then $\mu({\rm PES}_{\ell}^{\chi,\varepsilon}(f))>\tau$.
 	Take $\delta_{\ell}>0$ such that for every $x_1,x_2\in {\rm PES}_{\ell}^{\chi,\varepsilon}(f)$ with $d(x_1,x_2)\leq 2\delta_{\ell}$, one has $W^u_{\rm loc}(x_i) \pitchfork W^s_{\rm loc}(x_j) \neq \emptyset$, $i,j=1,2$.
 	Choose finite points $\{z_1,\cdots, z_k\} \subset {\rm PES}_{\ell}^{\chi,\varepsilon}(f)$ such that $\cup_{i=1}^{k}B(z_i,\delta_{\ell})\supset {\rm PES}_{\ell}^{\chi,\varepsilon}(f)$. 	
 	Let $z\in M$, if two ergodic hyperbolic measures $\mu_1$ and $\mu_2$ satisfying  $\mu_i \big( B(z,\delta) \cap{\rm PES}_{\ell}^{\chi,\varepsilon}(f)\big)>0,~i=1,2$,
 	then $ \mu_1 \overset{h}{\sim} \mu_2$. 
 	Consequently, 
 	\begin{equation}
 	\# \{{\rm BH}(\mu): \mu~\text{is ergodic hyperbolic and}~P_{\mu}(\phi)>P_0\}\leq k<+\infty.
 	\end{equation}
 	Therefore, the number of ergodic equilibrium states is at most finite. 
 	Let $\{\mu_n\}_{n>0}$ be a sequence of ergodic measures with $P_{\mu_n}(\phi) \to P(\phi)$ and $\mu_n\rightarrow \mu$. 
 	By passing to a sub-sequence  we may assume $\mu_n({\rm BH}(\mu_1))=1$ for every $n>0$. 
 	Since $f$ is $\chi$-SPR for $\phi$, one has $f$ is $\chi$-SPR for $\phi$ on ${\rm BH}(\mu_1)$.
 	Then, by Theorem \ref{Thm: SPR-BH}, $\mu$ is an ergodic equilibrium state, and the Lyapunov exponents vary continuously.
 \end{proof}

The following corollary follows directly from Theorem \ref{Thm: B} and the definition of the SPR property on Borel homoclinic classes.
\begin{Corollary}
	Assume that $f: M\rightarrow M$ is a $C^{1+\alpha}$ surface diffeomorphism, and that $\phi: M \to \mathbb{R}$ is a H\"{o}lder continuous function such that there exists $\chi>0$ for which $f$ is $\chi$-SPR for $\phi$. 
	Then, for every ergodic equilibrium state $\mu$ of $\phi$, one has $f$ is $\chi$-SPR for $\phi$ on ${\rm BH}(\mu)$.
\end{Corollary}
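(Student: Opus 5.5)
We need to prove: if $f$ is $\chi$-SPR for $\phi$ and $\mu$ is an ergodic equilibrium state, then $f$ is $\chi$-SPR for $\phi$ on $X:={\rm BH}(\mu)$. The plan is to reduce the SPR condition on $X$ to the global SPR condition by showing that the relative pressure $P_X(\phi)$ equals the global pressure $P(\phi)$.

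\emph{Step 1: $\mu(X)=1$.} By Theorem \ref{Thm: B}, all Lyapunov exponents of $\mu$ lie outside $(-\chi,\chi)$, so $\mu$ is ergodic and hyperbolic. Lemma \ref{Lem:BTHC}(1), applied with $\nu=\mu$ and using $\mu \overset{h}{\sim} \mu$, gives $\mu({\rm BH}(\mu))=1$.

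\emph{Step 2: $P_X(\phi)=P(\phi)$.} Since $\mu(X)=1$, the measure $\mu$ is admissible in the supremum defining $P_X(\phi)$. Hence
$$P_X(\phi)\geq P_\mu(\phi)=P(\phi).$$
The reverse inequality $P_X(\phi)\le P(\phi)$ is trivial, because the supremum defining $P_X(\phi)$ runs over a subset of the invariant measures appearing in the variational principle.

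\emph{Step 3: conclude.} Fix $\varepsilon>0$ and take a sequence of ergodic measures $\{\mu_n\}$ with $\mu_n(X)=1$ and $P_{\mu_n}(\phi)\to P_X(\phi)$. By Step 2, $P_{\mu_n}(\phi)\to P(\phi)$. The global $\chi$-SPR hypothesis then gives directly
$$\lim_{\ell\to\infty}\liminf_{n\to\infty}\mu_n({\rm PES}_{\ell}^{\chi,\varepsilon}(f))=1,$$
which is exactly the definition of $f$ being $\chi$-SPR for $\phi$ on $X$.

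The only point that needs care is Step 1, namely that $\mu$ is hyperbolic and charges its own Borel homoclinic class. Hyperbolicity is supplied by Theorem \ref{Thm: B}, and full measure on ${\rm BH}(\mu)$ is Lemma \ref{Lem:BTHC}(1). Once these are in place, Steps 2 and 3 are routine.
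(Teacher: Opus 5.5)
Your proof is correct and follows the paper's argument, which derives the corollary directly from Theorem~\ref{Thm: B} and the definition. Since $\mu$ is hyperbolic, it gives full measure to ${\rm BH}(\mu)$, hence $P_{{\rm BH}(\mu)}(\phi)=P(\phi)$, and every sequence of ergodic measures on ${\rm BH}(\mu)$ with pressures tending to $P_{{\rm BH}(\mu)}(\phi)$ is covered by the global $\chi$-SPR hypothesis.
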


\subsection{Strong positive recurrence symbolic dynamics}
Let $\mathcal{G}$ be a directed graph with finite or countable collection of vertices $\mathcal{V}$.
Without loss of generality, we assume that  $\# \{b\in \mathcal{V}:b\to a \}>0$ and $\# \{b\in \mathcal{V}:a \to b \}>0$ for any $a\in \mathcal{V}$.
Let 
$$\Sigma:=\Sigma(\mathcal{G}):=\{\bar{x}=(x_n)_{n\in \ZZ}\in \mathcal{V}^{\ZZ} : x_n \to x_{n+1},~\forall n \in \ZZ   \}.$$
We define the metric on $\Sigma$ by
$$d((x_n)_{n\in \ZZ},(y_n)_{n\in \ZZ}):= \exp \big(-\min \big\{|n|: x_n \neq y_n \big\}\big)~\text{and}~d((x_n)_{n\in \ZZ},(x_n)_{n\in \ZZ})=0.$$
Note that $\Sigma$ is locally compact iff $\mathcal{G}$ is \textit{locally finite} : $\# \{b\in \mathcal{V}:b\to a \}<\infty$ and $\# \{b\in \mathcal{V}:a \to b \}<\infty$ for any $a\in \mathcal{V}$.
In this section, we always assume that $\Sigma$ is locally compact.

Define the left shift map $\sigma: \Sigma \rightarrow \Sigma$, $\sigma(\bar{x})=\bar{y}$, where $y_n=x_{n+1}$ for any $n\in \NN$.
For each $a,b\in \mathcal{V}$,  we call that $a\overset{n}{\to}b$, if there are $\{c_1,\cdots,c_{n-1}\}\in \mathcal{V}$ such that $a\to c_1 \to \cdots \to c_{n-1} \to b$.
Then, $\sigma$ is topologically transitive if and only if  $\Sigma$ is \textit{irreducible} : for every $a,b\in \mathcal{V}$, there exists $n>0$ such that $a\overset{n}{\to}b$.  
Suppose that $\Sigma$ is irreducible, then $p:= {\rm gcd}\{n:a\overset{n}{\to}a \}$ does not depend on the choice of  $a$, and we call $p$ the \textit{period} of $\Sigma$.
If $p=1$, then we call $\Sigma$ is \textit{aperiodic}.
Note that $\sigma$ is topologically mixing if and only if  $\Sigma$ is irreducible and  aperiodic.
The following spectral decomposition theorem is well know, and the proof can be found in \cite[Lemma 6.4]{BCS25}.
\begin{Lemma}
	Let $\Sigma$ be an irreducible Markov shift with period $p\geq 1$.
	Then, there are disjoint closed subsets $\Sigma_0,\cdots,\Sigma_{p-1} \subset \Sigma$ such that $\Sigma=\bigcup_{i=0}^{p-1} \Sigma_i$, $\sigma(\Sigma_{i})=\Sigma_{(i+1)~{\rm mod}~p}$, and $\sigma^p: \Sigma_i \rightarrow \Sigma_i$  is topologically conjugate to a topologically mixing Markov shift for every $i=0,\cdots,p-1$.
\end{Lemma}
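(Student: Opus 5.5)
The plan is to build the cyclic decomposition out of the periodic structure of the graph $\mathcal{G}$, and then identify each cyclic piece under $\sigma^p$ with a mixing Markov shift.

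\emph{Step 1: labelling vertices.} Fix a vertex $a_0\in\mathcal{V}$. For each $b\in\mathcal{V}$, irreducibility gives some $n$ with $a_0\overset{n}{\to}b$. We set $\iota(b):=n \bmod p$. This is well defined: if $a_0\overset{n}{\to}b$ and $a_0\overset{n'}{\to}b$, pick $m$ with $b\overset{m}{\to}a_0$. Then $n+m$ and $n'+m$ are both lengths of loops at $a_0$, so both are divisible by $p$, hence $n\equiv n'$ mod $p$. The same concatenation argument shows that every edge $b\to c$ satisfies $\iota(c)=\iota(b)+1$ mod $p$.

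\emph{Step 2: the cylinders.} Define $\Sigma_i:=\{\bar x\in\Sigma:\iota(x_0)=i\}$. Each $\Sigma_i$ is a union of cylinders $[b]$ at coordinate $0$. Cylinders are open, and the complement of $\Sigma_i$ is a union of such cylinders too, so $\Sigma_i$ is clopen, in particular closed. The sets $\Sigma_i$ are pairwise disjoint and cover $\Sigma$. By the edge rule from Step 1, $\sigma(\Sigma_i)\subset\Sigma_{i+1}$. Equality holds because $\sigma$ is a bijection of $\Sigma$ and the sets partition $\Sigma$.

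\emph{Step 3: recoding $\sigma^p$ on $\Sigma_i$.} Let $\mathcal{V}_i:=\iota^{-1}(i)$. Build a graph $\mathcal{G}_i$ whose vertices are the admissible words $w=(w_0,\dots,w_{p-1})$ with $w_0\in\mathcal{V}_i$. Put an edge $w\to w'$ when $w_{p-1}\to w'_0$ in $\mathcal{G}$. The map $\bar x\mapsto (x_{kp},\dots,x_{kp+p-1})_{k\in\ZZ}$ is a homeomorphism from $\Sigma_i$ onto $\Sigma(\mathcal{G}_i)$, and it conjugates $\sigma^p$ to the shift. Local finiteness of $\mathcal{G}$ passes to $\mathcal{G}_i$, since each word has finitely many continuations of length $p$. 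We may need to discard vertices with no predecessor or successor, to match the standing convention; such vertices carry no bi-infinite paths.

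\emph{Step 4: mixing.} It remains to show that $\Sigma(\mathcal{G}_i)$ is irreducible and aperiodic. This is the main step.
\begin{itemize}
\item \emph{Irreducibility.} Take words $w,w'$. A path in $\mathcal{G}$ from $w_{p-1}$ to $w'_0$ exists by irreducibility of $\mathcal{G}$. Its length is $\equiv 1$ mod $p$ by the labels, so it yields a $\mathcal{G}_i$-path from $w$ to $w'$.
\item \emph{Aperiodicity.} Loop lengths at $b\in\mathcal{V}_i$ in $\mathcal{G}$ form an additive semigroup $S$ with $\gcd(S)=p$. By the standard Schur/Frobenius number-theoretic lemma, $S$ contains all sufficiently large multiples of $p$. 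So a vertex $w$ with $w_0=b$ has return loops of every sufficiently large length $kp$, obtained by closing the path $w$ via a path $w_{p-1}\overset{}{\to}\cdots\to b$. Hence the period of $\mathcal{G}_i$ is $1$.
\end{itemize}

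Irreducible and aperiodic means $\sigma^p$ on $\Sigma(\mathcal{G}_i)$ is topologically mixing, which completes the proof.
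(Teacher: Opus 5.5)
The paper gives no argument of its own for this lemma, only a reference to \cite[Lemma 6.4]{BCS25}. Your proof is the standard one: cyclic classes of vertices defined by path length from $a_0$ modulo $p$, the clopen sets $\Sigma_i$, and $p$-block recoding of $\sigma^p|_{\Sigma_i}$. It is correct.

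The aperiodicity step is compressed, and you should write it out. A loop at $b$ need not begin with the word $w$. So fix one path $\gamma$ from $w_{p-1}$ to $b$, and append loops at $b$ of every sufficiently large length in $p\NN$. This gives loops at $w$ in $\mathcal{G}_i$ of all sufficiently large lengths.

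Also, you never need to discard vertices of $\mathcal{G}_i$, since every admissible $p$-word extends in both directions.
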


Let $V: \Sigma \rightarrow \RR$ be a continuous function. 
For $\beta>0$, we define
$$\|V\|_{\beta}:= \sup_{\bar{x}\in \Sigma} |V(\bar{x})|+\sup \left\{\frac{|V(\bar{x})-V(\bar{y})|}{d(\bar{x},\bar{y})^{\beta}},~\bar{x}\neq \bar{y},~\bar{x},\bar{y}\in \Sigma \right\}.$$
And we call $V$ is $\beta$-H\"{o}lder continuous, if $\|V\|_{\beta}<\infty$. 
Note that all \textit{H\"{o}lder continuous functions} on $\Sigma$ are bounded. 
Let $\phi: \Sigma \rightarrow \RR$ be a $\beta$-H\"{o}lder continuous function. 
For each $a\in \mathcal{V}$, we define 
\begin{align*}
	&Z_n(\phi,a):= \sum \big\{S_n\phi(\bar{x}):\bar{x}=(x_n)_{n\in \ZZ}\in \Sigma,~\sigma^n(\bar{x})=\bar{x},~x_0=x_{n}=a\big\},  \\
	&Z_n^{\ast}(\phi,a):= \sum \big\{S_n\phi(\bar{x}):\bar{x}=(x_n)_{n\in \ZZ}\in \Sigma,~\sigma^n(\bar{x})=\bar{x},~x_0=x_{n}=a,~x_{i}\neq a,~1\leq i\leq n-1\big\},
\end{align*}
where $S_n\phi=\phi+\phi\circ\sigma+\cdots+ \phi\circ \sigma^{n-1}$.
Let 
$$P_{\Sigma}(\phi):=\sup\left\{h_{\mu}(\sigma)+ \int \phi ~{\rm d}\mu: \mu~\text{is an invariant probability measure of}~\sigma \right\},$$
and let $P_{\Sigma}(\mu,\phi):=h_{\mu}(\sigma)+ \int \phi ~{\rm d}\mu$, where $\mu$ is a $\sigma$-invariant probability measure.

\begin{Theorem}\label{Thm:countable-pressure-bernoulli}
	Let $\Sigma$ be an irreducible Markov shift with period $p\geq 1$ and $\phi: \Sigma \rightarrow \RR$ be a H\"{o}lder continuous function. 
	Suppose that $P_{\Sigma}(\phi)<\infty$. 
	Then, we have 
	\begin{itemize}
		\item  for every $a\in \mathcal{V}$,
		           $P_{\Sigma}(\phi)=\limsup\limits_{n\rightarrow \infty} \frac{1}{n} \log Z_n(\phi,a)$;
		\item  if there is a $\sigma$-invariant measure $\mu$ such that $P_{\Sigma}(\mu,\phi)=P_{\Sigma}(\phi)$, then it is unique, and $(\Sigma,\sigma,\mu)$     measure theoretically isomorphic to the product of a Bernoulli scheme and a cyclic permutation of $p$ points.
	\end{itemize}
\end{Theorem}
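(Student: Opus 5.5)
We would prove the statement in two parts, following the classical thermodynamic formalism for countable Markov shifts (Gurevich, Sarig, Buzzi--Sarig). Note that $Z_n$ should be read as the sum of $e^{S_n\phi(\bar x)}$ over periodic points.

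\textbf{Part 1: the Gurevich pressure formula.} Fix $a\in\mathcal V$ and set $P_G(\phi,a):=\limsup_n \frac1n\log Z_n(\phi,a)$.

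First we show that $P_G(\phi,a)$ does not depend on $a$. Given $a\overset{k}{\to}b$ and $b\overset{m}{\to}a$, concatenating loops gives $Z_{n+k+m}(\phi,a)\geq e^{-(k+m)\sup|\phi|}\,C\, Z_n(\phi,b)$. Here $C>0$ comes from the bounded distortion $|S_n\phi(\bar x)-S_n\phi(\bar y)|\leq \|\phi\|_\beta\sum_j e^{-\beta j}$ for points that agree on their coordinates $0,\dots,n$. The same argument with $a$ and $b$ swapped gives the reverse inequality.

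\emph{Inequality $P_G\leq P_\Sigma$.} We restrict to the finite subgraph $\mathcal G_N$ spanned by vertices within distance $N$ of $a$ in a fixed enumeration. On this finite irreducible subshift, the classical Bowen theory makes $\limsup\frac1n\log Z_n(\phi|_{\Sigma_N},a)$ the pressure of $\phi$ on $\Sigma_N$, which is at most $P_\Sigma(\phi)$. Then we let $N\to\infty$ and use the exhaustion $Z_n(\phi,a)=\lim_N Z_n(\phi|_{\Sigma_N},a)$. The subtle point is interchanging the $\limsup$ with the limit in $N$. For this we follow Sarig's argument: first pass to the induced first-return sums $Z_n^\ast$, then use supermultiplicativity, $Z_{n+m}\geq C^{-1}Z_nZ_m$, so that $P_G=\lim$ along multiples of the period $p$. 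Exhaustion by finite subgraphs then works because each $Z_n$ is a monotone limit.

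\emph{Inequality $P_\Sigma\leq P_G$.} Given an ergodic $\mu$ with $P_\Sigma(\mu,\phi)$ close to $P_\Sigma(\phi)$, we choose $a$ with $\mu[a]>0$ (independence of $a$ lets us do this). We induce on $[a]$ and use Abramov's formula, or alternatively the finite approximation: ergodic measures can be approximated in pressure by measures on finite subgraphs (via Katok-type entropy approximation for Markov shifts), and on a finite subgraph the variational principle gives the bound.

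\textbf{Part 2: uniqueness and the Bernoulli property.} We apply the Buzzi--Sarig argument.

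Suppose $\mu$ is an equilibrium state. Then $\mu$ is ergodic: if it were not, its ergodic components would be equilibrium states, so it suffices to prove uniqueness among ergodic ones. We fix $a$ with $\mu[a]>0$ and pass to the induced first-return system on $[a]$. This is a full shift on countably many symbols (the first-return loops) with induced potential $\bar\phi=S_{\tau}\phi-P_\Sigma(\phi)\,\tau$, which is locally H\"older. By Abramov's formula, $\mu$ is an equilibrium state for $\Sigma$ exactly when the induced measure is an equilibrium state for $\bar\phi$ with pressure $0$; the pressure $0$ condition is $\sum_n Z_n^\ast(\phi,a)e^{-nP}=1$, i.e.\ recurrence.

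On a full shift with a summable locally H\"older potential, the Ruelle--Perron--Frobenius theorem (in Sarig's version for countable alphabets) yields a unique Gibbs equilibrium state. Two equilibrium states giving positive mass to the same cylinder $[a]$ therefore coincide. By irreducibility, any two ergodic equilibrium states both charge some common vertex, since each has support equal to the full shift by the Gibbs property. This gives uniqueness.

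For the Bernoulli structure, we use the spectral decomposition lemma above: $\sigma^p|_{\Sigma_i}$ is conjugate to a topologically mixing Markov shift, and $\mu$ restricted to it is its equilibrium state. A mixing Gibbs-type equilibrium state of a H\"older potential on a mixing countable Markov shift is Bernoulli; this is Sarig's theorem, proved via weak Bernoulli of the generating partition by cylinders, exploiting the exponential decay of the RPF operator on the induced system. Since $\sigma$ cyclically permutes $\Sigma_0,\dots,\Sigma_{p-1}$, we get the isomorphism with a Bernoulli scheme times a $p$-cycle.

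\textbf{Main obstacle.} We expect the main obstacle to be the weak Bernoulli step for countable alphabets. It requires controlling $\mu$ of long cylinders uniformly, which needs the Gibbs property on the induced system and a return-time tail argument. Our first attempt would be to cite Sarig's results directly rather than reprove them.
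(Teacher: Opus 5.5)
Your proposal is correct and follows essentially the paper's route. The paper gives no argument of its own: it attributes the first item to Sarig's variational principle for the Gurevich pressure (extended from the mixing case to period $p$ by the spectral decomposition), uniqueness to Buzzi--Sarig, and the Bernoulli-times-rotation structure to Sarig or Daon. Your outlines (finite-subgraph exhaustion with supermultiplicativity along multiples of $p$, inducing on $[a]$ to a countable full shift, and citing Sarig for weak Bernoulli) are the standard arguments behind exactly those citations.

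Two small corrections. First, $\sum_n Z_n^\ast(\phi,a)e^{-nP}=1$ holds for potentials depending only on $(x_0,x_1)$ (e.g.\ $\phi\equiv 0$), but it is not the general recurrence criterion for H\"older $\phi$. What holds in general is that the induced potential $S_\tau\phi-P_\Sigma(\phi)\tau$ has Gurevich pressure $\le 0$, attained by the induced equilibrium measure, and that is all your Abramov--Kac uniqueness argument uses. Second, the last step, from $\sigma^p|_{\Sigma_i}$ being Bernoulli to a Bernoulli scheme times a $p$-cycle, tacitly uses Ornstein's theorem to produce a Bernoulli $p$-th root.
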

We now explain the origin of Theorem~\ref{Thm:countable-pressure-bernoulli}. We call a $\sigma$-invariant measure $\mu$ the \textit{equilibrium state} of $\phi$ on $\Sigma$, if $P_{\Sigma}(\mu,\phi)=P_{\Sigma}(\phi)$.
The first item of Theorem~\ref{Thm:countable-pressure-bernoulli}, the variational principle of Gurevich pressure was proved in \cite[Theorem 3]{Sar99}.
Although it requires $(\Sigma,\sigma)$ to be topologically mixing, by the spectral decomposition theorem it also holds when $(\Sigma,\sigma)$ is topologically transitive.
The uniqueness of the equilibrium state of $\phi$ on $\Sigma$ was proved in \cite[Theorem 1.1]{BS03}.
The Bernoulli property of equilibrium state was established in \cite[Theorem 3.1, Lemma 4.1]{Sar11} or \cite[Theorem 1.1]{Da13}.

\smallskip

 The existence of an equilibrium state is equivalent to the positive recurrence property.
  
\begin{Definition}[Positive recurrence]
	Let $\Sigma$ be an irreducible Markov shift and $\phi: \Sigma \rightarrow \RR$ be a H\"{o}lder continuous function. 
	Suppose that $P_{\Sigma}(\phi)<\infty$.  
	We call that $\Sigma$ is  positive recurrence for $\phi$, if for some $a\in \mathcal{V}$ ( for all $a\in \mathcal{V}$)
	$$\sum_{n=1}^{\infty} e^{-n\cdot  P_{\Sigma}(\phi)}  Z_n(\phi,a)=\infty,~\sum_{n=1}^{\infty} e^{-n\cdot  P_{\Sigma}(\phi)} \cdot n \cdot Z_n^{\ast}(\phi,a)<\infty.$$
\end{Definition}

The following theorem was proved in \cite[Theorem 8]{Sar99} under the assumption that $(\Sigma,\sigma)$ is topologically mixing, and by the spectral decomposition theorem it also holds when $(\Sigma,\sigma)$ is topologically transitive.
Moreover, \cite{Sar99} provides a more detailed description of the equilibrium state in this case, including its Gibbs property and initial distribution.
\begin{Theorem}
		Let $\Sigma$ be an irreducible Markov shift and $\phi: \Sigma \rightarrow \RR$ be a H\"{o}lder continuous function. 
	    Suppose that $P_{\Sigma}(\phi)<\infty$.  
	    Then, an equilibrium state of $\phi$ exists if and only if  $~\Sigma$ is  positive recurrence for $\phi$.
\end{Theorem}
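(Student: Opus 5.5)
We prove the two implications separately. By the spectral decomposition lemma, $\sigma^p$ restricted to each $\Sigma_i$ is conjugate to a topologically mixing Markov shift. Equilibrium states of $\phi$ for $\sigma$ correspond to equilibrium states of $S_p\phi$ for $\sigma^p|_{\Sigma_0}$: average the pushforwards $\sigma^j_*$ over $j<p$ in one direction, and normalize the restriction to $\Sigma_0$ in the other. Under this correspondence the pressures satisfy $P_{\Sigma_0}(S_p\phi,\sigma^p)=p\,P_\Sigma(\phi)$. The partition functions at a vertex $a\in\Sigma_0$ also match: loops through $a$ have length divisible by $p$, so $Z_{pn}(\phi,a)$ for $\sigma$ equals $Z_n(S_p\phi,a)$ for $\sigma^p$, and similarly for $Z^\ast$. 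Hence positive recurrence for $(\sigma,\phi)$ is equivalent to positive recurrence for $(\sigma^p|_{\Sigma_0},S_p\phi)$. It therefore suffices to treat the mixing case, which is \cite[Theorem 8]{Sar99}. Below I sketch that argument so the logic is self-contained.

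\emph{Positive recurrence $\Rightarrow$ existence.} First reduce to a one-sided shift by cohomology: subtract a coboundary to replace $\phi$ by a Hölder potential depending only on future coordinates. Then normalize $\phi-P_\Sigma(\phi)$ so that the Gurevich pressure is $0$. Since $\sum Z_n=\infty$, the potential is recurrent. The generalized Ruelle–Perron–Frobenius theorem then gives a positive eigenfunction $h$ with $L h=h$ and a conservative eigenmeasure $\nu$ with $L^\ast\nu=\nu$. Both are built from first-return (renewal) sums $\sum_n Z_n^\ast$ to the cylinder $[a]$. The remaining condition $\sum n e^{-nP}Z_n^\ast<\infty$ is exactly $\int h\,d\nu<\infty$, because $\int_{[a]}$ of the return time is controlled by $\sum n Z_n^\ast$. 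Then $m=h\nu/\int h\,d\nu$ is a shift-invariant probability measure. A Gibbs-type bound on cylinders gives $h_m+\int\phi\,dm=P_\Sigma(\phi)$, via Rokhlin's formula $h_m=-\int\log(\text{Jacobian})\,dm$. When $h_m$ might be infinite, one handles this by checking that $-\int\phi\,dm<\infty$.

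\emph{Existence $\Rightarrow$ positive recurrence.} Let $\mu$ be an equilibrium state. Work on the induced system on $[a]$ with first-return time $\tau$. By Abramov's formula and the variational principle, the induced potential $\bar\phi=S_\tau(\phi-P)$ has pressure $\le0$, and $\mu_{[a]}$ achieves $h+\int\bar\phi=0$. The induced system is a full shift on countably many symbols (first-return loops), and its partition function is $\sum_n e^{-nP}Z_n^\ast$. Existence of an equilibrium state forces this sum to equal $1$, which gives recurrence. Since $\int\tau\,d\mu_{[a]}=1/\mu[a]<\infty$, and the full-shift equilibrium state is the Bernoulli-like Gibbs measure with weights $e^{-nP}Z_n^\ast$ (up to bounded distortion), one gets $\sum n e^{-nP}Z_n^\ast<\infty$.

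The main obstacle is the second implication. One must show the induced full-shift equilibrium state is unique and has the explicit Gibbs form, which requires bounded distortion of $S_\tau\phi$ on return cylinders; this follows from Hölder continuity. The exact equality $\sum e^{-nP}Z^\ast_n=1$ also needs justification. I would cite \cite{Sar99} and \cite{BS03} for these facts rather than reprove them.
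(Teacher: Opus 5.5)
Your proposal is correct and follows the paper's argument. The paper cites \cite[Theorem 8]{Sar99} for the topologically mixing case and extends it to irreducible shifts via the spectral decomposition, which is exactly your reduction to $(\sigma^p|_{\Sigma_0},S_p\phi)$ with $P_{\Sigma_0}(S_p\phi,\sigma^p)=p\,P_\Sigma(\phi)$. Your sketch of Sarig's argument is not needed, and its identity $\sum_n e^{-nP}Z_n^\ast=1$ is exact only for potentials depending on two consecutive coordinates; in general recurrence means the induced potential has Gurevich pressure $0$, so the sum is only comparable to $1$. Since you defer these points to \cite{Sar99} and \cite{BS03}, as the paper does, this does not affect the proof.
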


We now recall the notion of strong positive recurrence, which implies a series of statistical properties for the equilibrium state.
\begin{Definition}[Strong Positive recurrence]
	Let $\Sigma$ be an irreducible Markov shift and $\phi: \Sigma \rightarrow \RR$ be a H\"{o}lder continuous function. 
	Suppose that $P_{\Sigma}(\phi)<\infty$.  
	We call that $\Sigma$ is strong positive recurrence for $\phi$, if for some $a\in \mathcal{V}$ ( for all $a\in \mathcal{V}$)
	$$\Delta(\phi)=\Delta(\phi,a):=\limsup_{n\rightarrow +\infty} \frac{1}{n} \log Z_n(\phi,a) - \limsup_{n\rightarrow +\infty} \frac{1}{n} \log Z_n^{\ast}(\phi,a)>0$$
\end{Definition}
\begin{Remark}\label{Rem:SPRpro}
By the definition of $\Delta(\phi)$, if $\phi$ satisfies the SPR property,  then the SPR property automatically holds for any H\"{o}lder potential $\psi$ such that ${\rm Var}(\psi-\phi)<\Delta(\phi)$. 
Thus, for a $C^\infty$ surface diffeomorphism $f$, if $~{\rm Var}(\psi)<\Delta(0)$, then $f$ is SPR for $\psi$. 
In general, one only knows that $\Delta(0)$ is positive, but we do not know any explicit lower bound.
We thank O. Sarig for pointing out this statement to us.
\end{Remark}

By \cite{Jones62} or \cite[Lemma 7.2]{BCS25},  strong positive recurrence implies positive recurrence in the case $\phi\equiv 0$. 
One can similarly prove that strong positive recurrence implies positive recurrence for every H\"{o}lder continuous function $\phi:\Sigma \to \RR$; see also \cite{Sar01}.

Recall the definition of SPR for surface diffeomorphisms on Borel homoclinic classes given in Section \ref{SEC:SPR-B}.
\begin{Theorem}\label{Thm:SPRcoding}
	Let $f: M \to M$ be a $C^{1+\alpha}$ surface diffeomorphism and $\phi: M\rightarrow \RR$ be a H\"{o}lder continuous function.
   Assume that $X\subset M$ is a Borel homoclinic class.
   Then, for every $\chi_{0}>0$ there exists a locally compact irreducible Markov shift $\Sigma$ and a H\"{o}lder continuous map $\pi:\Sigma \rightarrow M$ such that
   \begin{enumerate}
   	\item [(a)] $\pi\circ \sigma = f\circ \pi$ and $\pi: \Sigma^{\#} \rightarrow M$ is finite to one, where $\Sigma^{\#}$ denotes the regular part of $\Sigma$, consisting of all  $(x_n)_{n\in \ZZ}\in \Sigma$ for which both $(x_i)_{i\geq 0},~(x_i)_{i\leq 0}$ have constant subsequence;
   	\item [(b)] for every ergodic $\sigma$-invariant measure $\hat{\mu}$ on $\Sigma$, the projection $\mu:=\pi_{\ast} \hat{\mu}$ is an ergodic $f$-invariant measure such that $\mu(X)=1$ and $ h_{\mu}(f)=h_{\hat{\mu}}(\sigma)$;
   	\item [(c)] for every ergodic $\chi_0$-hyperbolic measure $\mu$ with $\mu(X)=1$, there exists an ergodic $\sigma$-invariant measure $\hat{\mu}$ on $\Sigma$ such that $\mu=\pi_{\ast} \hat{\mu}$;
   	\item [(d)] there exists $\chi_1>0$ such that for every $\bar{x}\in \Sigma$, there is a splitting $T_{\pi(\bar{x})}M:=E^u(\bar{x})\oplus E^s(\bar{x})$ such that 
   	$\bar{x} \mapsto E^{u/s}(\bar{x})$ is H\"{o}lder continuous and $\limsup\limits_{n\rightarrow +\infty} \frac{1}{n}\log \|D_{\pi(\bar{x})}f^n|_{E^s(\bar{x})}\|<-\chi_1$, $\limsup\limits_{n\rightarrow +\infty} \frac{1}{n}\log \|D_{\pi(\bar{x})}f^{-n}|_{E^u(\bar{x})}\|<-\chi_1$;
   	\item [(e)] if $f$ is $\chi_2$-SPR for $\phi$ with $\chi_2>\chi_0$, then $\Sigma$ is SPR for $\hat{\phi}:=\phi\circ \pi$, $P_{X}(\phi)=P_{\Sigma}(\hat{\phi})$ and there exists $P_0< P_{X}(\phi)$ such that for every ergodic measure $\mu$ on $X$ with $P_{\mu}(\phi)>P_0$, one has $\mu(\pi(\Sigma^{\#}))=1$.
   \end{enumerate}
\end{Theorem}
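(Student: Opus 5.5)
The plan is to take the coding from Sarig's construction, as refined by Buzzi–Crovisier–Sarig, and restrict it to a single irreducible component adapted to $X$. Items (a)–(d) are essentially the output of the symbolic machinery. Item (e) is where the SPR hypothesis on $f$ has to be converted into the combinatorial SPR condition on $\Sigma$.

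\emph{Steps 1–2: items (a)–(d).} Fix $\chi_0>0$ and take Sarig's $\chi_0$-coding $\pi:\widehat\Sigma\to M$ (with the Ben Ovadia/BCS improvements). It is H\"older continuous, satisfies $\pi\circ\sigma=f\circ\pi$, is finite-to-one on $\widehat\Sigma^{\#}$, and lifts every ergodic $\chi_0$-hyperbolic measure. Every point of $\widehat\Sigma$ carries the Pesin charts of its symbols, so H\"older splittings $E^{u/s}(\bar x)$ with uniform contraction rate $\chi_1$ (some $\chi_1\le\chi_0$, after shrinking the chart parameters) exist; this gives (d). By the results of \cite{BCS22} relating irreducible components to homoclinic classes, there is an irreducible component $\Sigma\subset\widehat\Sigma$ whose ergodic measures project onto ergodic measures homoclinically related to $\mathcal O$, where $X={\rm BH}(\mathcal O)$. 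Lemma~\ref{Lem:BTHC}(1) then gives $\mu(X)=1$. Conversely, every ergodic $\chi_0$-hyperbolic $\mu$ with $\mu(X)=1$ lifts to this component, which gives (c). Entropy is preserved because $\pi$ is finite-to-one on $\Sigma^\#$, which carries every ergodic measure; this gives (b). Local compactness comes from the local finiteness of the BCS coding.

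\emph{Step 3: the pressure identity in (e).} Entropy equality in (b) gives $P_\Sigma(\hat\phi)\le P_X(\phi)$. For the reverse inequality, take ergodic $\mu_n$ on $X$ with $P_{\mu_n}(\phi)\to P_X(\phi)$. By $\chi_2$-SPR these measures are eventually $\chi_2$-hyperbolic, hence $\chi_0$-hyperbolic since $\chi_2>\chi_0$, so they lift by (c). The same reasoning gives $P_0<P_X(\phi)$ such that every ergodic $\mu$ on $X$ with $P_\mu(\phi)>P_0$ lifts. Since the lift is carried by $\Sigma^\#$, this yields $\mu(\pi(\Sigma^\#))=1$.

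\emph{Step 4: SPR of $\Sigma$ for $\hat\phi$ (main obstacle).} This is the hard part, and I would follow \cite[Section 7]{BCS25}, replacing entropy by pressure throughout. Suppose $\Delta(\hat\phi,a)=0$ for every vertex $a$. Using the first-return loop sums $Z_n^*$, I would build ergodic measures $\hat\nu_k$ with $P_\Sigma(\hat\nu_k,\hat\phi)\to P_\Sigma(\hat\phi)$ that escape every finite set of vertices, i.e. $\hat\nu_k([a])\to0$ for each $a$. Following BCS, I would then use the finite-to-one and Pesin-chart structure: every compact Pesin block ${\rm PES}^{\chi_2,\varepsilon}_\ell$ is covered, up to the symbols needed for the lift, by finitely many cylinders. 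This would force $\pi_*\hat\nu_k({\rm PES}_\ell^{\chi_2,\varepsilon})\to0$ for each $\ell$. That contradicts $\chi_2$-SPR of $f$ applied to the projections, whose pressures converge to $P_X(\phi)$ by Step 3.

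The delicate points are two. First, constructing the escaping measures with near-maximal pressure: here H\"older continuity of $\hat\phi$ gives the bounded distortion needed in the Gurevich-type estimates. Second, the quantitative link between symbols and Pesin sets, where the gap $\chi_2>\chi_0$ is used; I expect this second point to be the main technical work.
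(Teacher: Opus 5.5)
Your proposal takes essentially the same route as the paper. The paper gives no independent proof: it assembles the statement from Sarig's coding, the irreducible coding of Borel homoclinic classes in \cite{BCS22}, and \cite[Theorem 10.5]{BCS25} for item (e). Your Steps 3--4 are a faithful outline of that cited argument: you lift near-maximal-pressure measures via (c) to get $P_X(\phi)=P_\Sigma(\hat\phi)$, and you argue that failure of SPR for $\hat\phi$ would produce escaping measures of near-maximal pressure. Because Pesin blocks lift into finitely many cylinders, their projections would give vanishing mass to every Pesin block, contradicting SPR of $f$. The two delicate points you flag (escape of mass from $\Delta=0$, and finiteness of cylinders over Pesin blocks) are exactly what \cite{BCS25} supplies. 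As you implicitly do, the hypothesis of (e) must be read as $\chi_2$-SPR for $\phi$ on $X$, with pressures tending to $P_X(\phi)$ rather than $P(\phi)$, which is also how the paper applies it.
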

We now explain the origin of Theorem~\ref{Thm:SPRcoding}. The SPR coding theorem was proved in \cite[Theorem 10.5]{BCS25}. 
For surface diffeomorphisms, Sarig \cite{Sar13} constructed a symbolic coding of non-uniformly hyperbolic sets by an countable topologically Markov shift; Buzzi-Crovisier-Sarig \cite{BCS22} constructed a symbolic coding of Borel homoclinic classes by an countable topologically irreducible Markov shift; and more recently, they \cite{BCS25}   showed that if a diffeomorphism is SPR on a homoclinic class, then it admits a symbolic coding by an SPR Markov shift.

\subsection{A list of statistical properties of equilibrium states}
In this section, we recall a collection of statistical properties of equilibrium states of diffeomorphisms, derived from the SPR coding.
These statistical properties were first stated in \cite{BCS25} for measures of maximal entropy.
To simultaneously deal with H\"{o}lder continuous potentials and geometric potentials $J^u(x):= \log |\det (D_xf|_{E^u(x)})|$, we use a notation of quasi-H\"{o}lder continuous functions as defined in \cite[Section 11]{BCS25}.

For a closed surface $M$, we denote $\PP TM$ the projective bundle of $TM$.
Let $X$ be a Borel homoclinic class.
A function $\phi:X\rightarrow \RR$ is said $\beta$-\textit{quasi-H\"{o}lder continuous}, if there exists a $\beta$-H\"{o}lder continuous function $\Phi: \PP TM \rightarrow \RR$ such that  $\Phi(x,E^u(x))=\phi(x)$ for all $x\in X$, and denote
$$ |\phi|_{\beta}:= \inf \left\{ \|\Phi\|_{\beta}:~\Phi(x,E^u(x))=\phi(x),~\forall x\in X \right\}.$$
If $\phi:M\rightarrow \RR$ is $\beta$-H\"{o}lder continuous, then $\phi |_{X}$ is  $\beta$-quasi-H\"{o}lder continuous and $|\phi|_X|_{\beta}\leq \|\phi\|_{\beta}$.
If $f:M\rightarrow M$ is a $C^{1+\alpha}$ surface diffeomorphism, then $x\to J^u(x)$ is $\alpha$-quasi-H\"{o}lder continuous (see \cite[Section 11.1]{BCS25} ).

Let $f:M\rightarrow M$ be a $C^{1+\alpha}$ surface diffeomorphism and let $X$ be a Borel homoclinic class.
Every quasi-Hölder continuous function on $X$ is measurable and bounded (see \cite[Lemma 11.5]{BCS25}).
Consider the irreducible Markov coding $\pi:\Sigma \rightarrow M$ as in Theorem \ref{Thm:SPRcoding}.
The following lemma was proved in \cite[Lemma 12.1]{BCS25}.
\begin{Lemma}\label{Lem:Lem3.9}
	For any $\beta>0$, there exist $C=C(\Sigma,\pi,\beta)$ and $\hat{\beta}=\hat{\beta}(\Sigma,\pi,\beta)$ such that for every $\beta$-quasi-H\"{o}lder continuous function $V:X\rightarrow \RR$, there is a $\hat{\beta}$-H\"{o}lder continuous function $\hat{V}:\Sigma \rightarrow \RR$ such that 
	$\hat{V}(\bar{x})= V \circ \pi(\bar{x})$ for every $\bar{x}\in \pi^{-1}(X)$ and $\| \hat{V}\|_{\hat{\beta}}\leq C \cdot |V|_{\beta}$.
\end{Lemma}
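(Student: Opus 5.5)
The lemma concerns the lifting of a $\beta$-quasi-H\"older function $V$ on $X$ to a $\hat\beta$-H\"older function $\hat V$ on $\Sigma$ with $\|\hat V\|_{\hat\beta}\leq C|V|_\beta$. The plan is to lift through the projective bundle. Fix $V$ and choose a $\beta$-H\"older $\Phi:\PP TM\to\RR$ with $\Phi(x,E^u(x))=V(x)$ on $X$ and $\|\Phi\|_\beta\le 2|V|_\beta$, which is possible by the definition of the infimum. Then define
$$\hat V(\bar x):=\Phi\big(\pi(\bar x),E^u(\bar x)\big),\qquad \bar x\in\Sigma,$$
where $E^u(\bar x)$ is the unstable direction supplied by item (d) of Theorem~\ref{Thm:SPRcoding}. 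The map $\hat\pi:\bar x\mapsto(\pi(\bar x),E^u(\bar x))\in\PP TM$ is a composition of H\"older maps. By (a) and (d), $\pi$ is H\"older and $\bar x\mapsto E^u(\bar x)$ is H\"older. Hence $\hat\pi$ is $\gamma$-H\"older for some $\gamma=\gamma(\Sigma,\pi)\in(0,1]$ with constant $K=K(\Sigma,\pi)$. Consequently $\hat V=\Phi\circ\hat\pi$ is $\hat\beta:=\beta\gamma$-H\"older with
$$\|\hat V\|_{\hat\beta}\le \sup|\Phi|+K^\beta\,{\rm Hol}_\beta(\Phi)\le \max(1,K^\beta)\,\|\Phi\|_\beta\le 2\max(1,K^\beta)|V|_\beta .$$
So $C:=2\max(1,K^\beta)$ depends only on $\Sigma,\pi,\beta$. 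A minor technicality is that the H\"older estimate is only needed for pairs with $d(\bar x,\bar y)\le 1$ say; for far pairs the sup-norm bound already suffices, since the metric on $\Sigma$ is bounded by $1$.

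The remaining point, and the one requiring care, is the identity $\hat V(\bar x)=V(\pi(\bar x))$ for $\bar x\in\pi^{-1}(X)$. This requires $E^u(\bar x)=E^u(\pi(\bar x))$, i.e.\ the coding direction must agree with the Oseledets/Pesin unstable direction of the projected point. I would argue this from (d). The vector $E^u(\bar x)$ is backward contracting at exponential rate $\chi_1$. For $x=\pi(\bar x)\in X\subset{\rm NUH}^\#(f)$ the splitting $E^u(x)\oplus E^s(x)$ is hyperbolic. Moreover, on a surface any line that is exponentially contracted under $Df^{-n}$ must coincide with $E^u(x)$. Indeed, decompose a vector $v=v^u+v^s$: if $v^s\neq0$, then $\|Df^{-n}v^s\|$ grows exponentially (by recurrence to Pesin blocks along $n_k$, $m_k$), and with the angle control it dominates, contradicting contraction. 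The rigorous version of this uniqueness is the main obstacle, because $x$ is only recurrent to a Pesin block, not uniformly hyperbolic. I would first try to use the backward times $m_k$ where $f^{-m_k}(x)\in{\rm PES}_\ell^{\chi,\varepsilon}$, where the angle and growth estimates are uniform. Alternatively, I would simply cite the construction in \cite{BCS25}, where $E^u(\bar x)$ is defined as the tangent to the unstable manifold of the coded chain and thus equals $E^u(\pi(\bar x))$ whenever the latter is defined.
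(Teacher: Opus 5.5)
Your proposal is correct and is essentially the argument behind the paper's citation of \cite[Lemma 12.1]{BCS25}: you set $\hat V=\Phi\circ\hat\pi$ with $\hat\pi(\bar x)=(\pi(\bar x),E^u(\bar x))$, which is H\"older by item (d) and the H\"older continuity of $\pi$ in the coding theorem, and you identify $E^u(\bar x)=E^u(\pi(\bar x))$ on $\pi^{-1}(X)$ as the unique backward-contracted direction. The step you flag as the main obstacle is short. At times $m_k$ with $y=f^{-m_k}(x)\in{\rm PES}_{\ell}^{\chi,\varepsilon}(f)$, the angle bound at $y$ and the contraction of $E^s(y)$ give $\|D_xf^{-m_k}v\|\geq \ell^{-1}\|D_xf^{-m_k}v^s\|\geq \ell^{-2}e^{(\chi-\varepsilon)m_k}\|v^s\|$, which contradicts the backward contraction in item (d) unless $v^s=0$.
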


\subsubsection{Exponential mixing}\label{SEC:EM}
For SPR Markov shift with period $p\geq 1$, Buzzi-Crovisier-Sarig \cite[Theorem B.12]{BCS25} showed that every ergodic component of the unique equilibrium state with respect to $\sigma^p$ admits exponential decay of correlations :

\begin{Theorem}\label{Thm:C-EX}
	Let $\Sigma$ be an irreducible Markov shift of period $p\geq 1$ and $\phi: \Sigma \rightarrow \RR$ be a H\"{o}lder continuous function. 
	Suppose that $P_{\Sigma}(\phi)<\infty$ and $\Sigma$ is SPR for $\phi$.  
	Let $\hat{\mu}$ be the unique equilibrium state of $\phi$.
	Then, for each $\hat{\beta}>0$, there are $\hat{C}>0$ and $\hat{\theta}\in (0,1)$ such that for every $\hat{\beta}$-H\"{o}lder continuous functions $\hat{V}_1,\hat{V}_2:\Sigma\rightarrow \mathbb{R}$ and every ergodic component $\hat{\nu}$ of $\hat{\mu}$ with respect to $\sigma^p$, one has
	$$\left| \int \hat{V}_1(x) \cdot \hat{V}_2(\sigma^{pn}(x))~{\rm d}\hat{\nu}(x)- \int \hat{V}_1~{\rm d}\hat{\nu}  \cdot \int \hat{V}_2~{\rm d}\hat{\nu} \right|\leq \hat{C}\cdot \hat{\theta}^n \cdot \|\hat{V}_1\|_{\hat{\beta}} \cdot \|\hat{V}_2\|_{\hat{\beta}},~\forall n>0.$$
\end{Theorem}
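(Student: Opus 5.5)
We would derive this from the spectral theory of transfer operators for SPR countable Markov shifts, as in \cite[Theorem B.12]{BCS25}. The first step reduces to the topologically mixing case. By the spectral decomposition lemma, $\Sigma=\bigcup_{i=0}^{p-1}\Sigma_i$ with $\sigma(\Sigma_i)=\Sigma_{i+1 \bmod p}$, and each $(\Sigma_i,\sigma^p)$ is conjugate to a topologically mixing Markov shift $\Sigma_i'$; its vertices are the admissible words of length $p$ starting at a vertex of the $i$-th cyclic class. Since $\hat\mu(\Sigma_i)=1/p$, the $\sigma^p$-ergodic components of $\hat\mu$ are exactly $\hat\nu_i:=p\,\hat\mu|_{\Sigma_i}$. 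Each $\hat\nu_i$ is the equilibrium state of $S_p\phi$ on $\Sigma_i'$, with pressure $pP_\Sigma(\phi)$. We check that $S_p\phi$ is H\"older on $\Sigma_i'$ and that the conjugacy changes H\"older exponents and norms only by constants depending on $p$. We also check that SPR passes to $(\Sigma_i',S_p\phi)$: loops at a vertex of $\Sigma_i'$ correspond to loops of length divisible by $p$ at a vertex of $\Sigma$, so $\Delta(S_p\phi)\ge p\,\Delta(\phi)>0$. It then suffices to prove the estimate for a topologically mixing SPR shift, with $p=1$.

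In the mixing case we pass to the one-sided shift $\Sigma^+$. By Sinai's lemma there is a bounded H\"older $u$ such that $\phi^+:=\phi+u-u\circ\sigma$ depends only on nonnegative coordinates; this changes neither $\hat\mu$ nor SPR. Let $L$ be the Ruelle operator of $\phi^+$. Positive recurrence (implied by SPR) gives, by \cite{Sar99}, a positive eigenfunction $h$ with $Lh=e^{P}h$ and a conformal measure $m$ with $L^*m=e^{P}m$, and $\hat\mu^+=h\,m$. Normalizing gives $\tilde L f=e^{-P}h^{-1}L(hf)$ with $\tilde L1=1$. The key input is the Cyr--Sarig theorem: for mixing SPR shifts, $\tilde L$ acting on a suitable Banach space $\mathcal B$ of locally H\"older functions (weighted by $h$ and the return time to a fixed cylinder $[a]$) has a spectral gap. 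That is, $1$ is a simple isolated eigenvalue and the rest of the spectrum lies in a disc of radius $\hat\theta_0<1$. The proof goes through the renewal/inducing scheme on $[a]$. SPR says the first-return generating function $\sum_n Z_n^*(\phi,a)e^{-nP}z^n$ is analytic on a disc of radius $>1$. Mixing rules out other eigenvalues on the unit circle, and a Lasota--Yorke inequality for the induced operator gives quasi-compactness. This gives $\|\tilde L^n f-\int f\,d\hat\mu\|_{\mathcal B}\le C\hat\theta^n\|f\|_{\mathcal B}$.

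For two-sided observables we use the standard approximation. Given $\hat\beta$-H\"older $\hat V_1,\hat V_2$, split $n=2k$ or so. Replace $\hat V_1$ by $\hat V_1\circ\sigma^{-k}$ conditioned on the coordinates $[-k,\infty)$, and shift it so that it depends only on future coordinates; the error is $O(e^{-\hat\beta k}\|\hat V_1\|_{\hat\beta})$. Do the same for $\hat V_2$. The correlation then becomes $\int \hat V_2^+\cdot \tilde L^{m}(\hat V_1^+)\,d\hat\mu$ plus an exponentially small error, and the spectral gap bounds this by $C\hat\theta^m\|\hat V_1^+\|_{\mathcal B}\|\hat V_2\|_\infty$. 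We must still show that $\|\hat V_1^+\|_{\mathcal B}\le C\|\hat V_1\|_{\hat\beta}$ despite the weights on the countable alphabet. This holds because H\"older functions are bounded and the weights are integrable under SPR. Taking the worse of the two exponential rates gives $\hat C,\hat\theta$.

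The main obstacle is the spectral gap for $\tilde L$ on the countable alphabet. Without SPR, the essential spectral radius can equal $1$. Handling it needs the precise choice of $\mathcal B$ and the analytic continuation of the renewal series. Following the proof, we would take this as the cited Cyr--Sarig/BCS result and verify only the reductions above.
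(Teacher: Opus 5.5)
The paper gives no argument for this statement. It quotes \cite[Theorem B.12]{BCS25}, which is already stated for irreducible shifts of period $p$ and for two-sided H\"older observables, so your outline is an unpacking of that citation. Two of the reductions you say you would verify yourself fail as written.

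The first is the transfer of SPR to $(\Sigma_i',S_p\phi)$. The loop correspondence controls $Z_n$, not $Z_n^*$. A first return to the vertex $w=(a_0,\dots,a_{p-1})$ of $\Sigma_i'$ may pass through $a_0$ at intermediate times $kp$, as long as the next $p$ symbols are not $w$. So $Z_n^*(S_p\phi,w)$ is not dominated by $Z_{np}^*(\phi,a_0)$, and the inequality $\Delta(S_p\phi,w)\ge p\,\Delta(\phi,a_0)$ is false in general. Take $\phi=0$ on the period-$2$ graph with edges $a\to b_1$, $a\to b_2$, $b_1\to a$, $b_2\to a$, $b_1\to c$, $c\to b_1$. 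Then $P_\Sigma(0)=\log\varphi$ with $\varphi=(1+\sqrt5)/2$. Since $Z_n^*(0,a)\le 2$ for all $n$, we get $\Delta(0,a)=\log\varphi$. In $\Sigma_0'$, however, the first returns to $w=ab_2$ (besides $ab_2\,ab_2$) are $ab_2\,ab_1\,u\,ab_2$ with $u$ an arbitrary word over $\{ab_1,cb_1\}$. This gives $\Delta(0,ab_2)=2\log\varphi-\log 2<2\,\Delta(0,a)$. Positivity does survive, but it needs a renewal argument:
\begin{itemize}
\item write a first return to $w$ as a concatenation $\gamma_1\cdots\gamma_m$ of first-return loops to $a_0$, in which only $\gamma_1$ begins with $w$;
\item SPR makes the induced pressure $t\mapsto P(\overline{\phi-t})$ on first-return loops finite, hence continuous, near $t=P_\Sigma(\phi)$, where it vanishes;
\item restricted to loops not beginning with $w$, this pressure is strictly negative at $t=P_\Sigma(\phi)$ by the Gibbs property, hence still negative for some $t<P_\Sigma(\phi)$;
\item this bounds the growth rate of $Z_n^*(S_p\phi,w)$ by $pt<pP_\Sigma(\phi)$.
\end{itemize}
Alternatively, use the period-$p$ form of the cited theorem, as the paper does.

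The second is the bound $\|\hat V_1^+\|_{\mathcal B}\le C\|\hat V_1\|_{\hat\beta}$. It cannot hold uniformly in $k$, and the countable alphabet has nothing to do with it. No space on which $\tilde L$ has a spectral gap can contain the functions $g\circ\sigma^k$ with norms bounded in $k$, because $\tilde L^k(g\circ\sigma^k)=g$. This already applies to your construction when $\hat V_1$ depends only on future coordinates. Concretely, your shifted one-sided approximant can depend on its $k$-th coordinate at order $\|\hat V_1\|_\infty$. Its H\"older seminorm on $\Sigma^+$ is therefore of order $e^{\hat\beta k}\|\hat V_1\|_{\hat\beta}$, and with $k\approx n/2$ the bound $\hat\theta^n e^{\hat\beta n/2}$ need not decay. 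There are two standard repairs. One is to take $k=\epsilon n$ with $\hat\theta e^{\hat\beta\epsilon}<1$, balancing against the truncation error $e^{-\hat\beta\epsilon n}$. The other is to apply $\tilde L^k$ first and use a Lasota--Yorke inequality to bring the seminorm back to $O(\|\hat V_1\|_{\hat\beta})$ before invoking the gap. With both corrections, and granting that bounded H\"older functions embed continuously in $\mathcal B$, your route is a legitimate unpacking of the cited result.
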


     Now we give a stronger version of the proof of Theorem \ref{Thm: Exponential mixing}: the functions $V_1,V_2$ can be assumed to be $\beta$-quasi-H\"{o}lder continuous, rather than merely $\beta$-H\"{o}lder continuous.
     \begin{proof}[Proof of Theorem \ref{Thm: Exponential mixing}]
	 The proof is parallel to \cite[Section 12.1]{BCS25}.
	 Let $\mu$ be an ergodic equilibrium state of $\phi$ of mixing order $p\geq 1$, and set $X={\rm BH}(\mu)$.
	 Assume that $\phi: M\rightarrow \RR$ is H\"{o}lder continuous and $f$ is $\chi$-SPR for $\phi$.
 	 Fix $\chi_0<\chi$. 
	 By Theorem \ref{Thm:SPRcoding}, there exists a coding map $\pi: \Sigma \rightarrow X$, where $\Sigma$ is an irreducible Markov shift of period $q$ ($q$ may not be equal to $p$).
	 For the function $\phi$ we denote $\hat{\phi}=\phi \circ \pi$. 
	 Since $\pi$ is Hölder continuous, $\hat{\phi}$ is also H\"{o}der continuous.
     By assumption, $f$ is $\chi$-SPR for $X$ with respect to $\phi$, hence $\Sigma$ is SPR for $\hat{\phi}$  by Theorem \ref{Thm:SPRcoding} (e).
     Let $\hat{\mu}$ be the unique equilibrium state of $\hat{\phi}$. 
  
    \begin{Claim}
     The numbers $p,q\in \NN$ satisfies $(q/p)\in \NN$; the measure $\hat{\mu}$ satisfies $\pi_{\ast}(\hat{\mu})=\mu$;  and for every ergodic component $\nu$ of $\mu$ with respect to $f^{p}$, there exists an ergodic component $\hat{\nu}$ of $\hat{\mu}$ with respect to  $\sigma^{q}$ such that $\nu=\pi_{\ast}\hat{\nu}$.
    \end{Claim}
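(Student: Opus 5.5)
I would prove the three assertions of the Claim in the order: $\pi_\ast\hat\mu=\mu$, then the divisibility $p\mid q$, then the lifting of $f^p$-components.

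\emph{Step 1: $\pi_\ast\hat\mu=\mu$.} By Theorem~\ref{Thm:SPRcoding}(b), $\pi_\ast\hat\mu$ is ergodic with $\pi_\ast\hat\mu(X)=1$ and $h_{\pi_\ast\hat\mu}(f)=h_{\hat\mu}(\sigma)$. Since $\hat\phi=\phi\circ\pi$, we have $\int\hat\phi\,{\rm d}\hat\mu=\int\phi\,{\rm d}\pi_\ast\hat\mu$. By (e), $P_\Sigma(\hat\phi)=P_X(\phi)$. Hence $\pi_\ast\hat\mu$ is an ergodic equilibrium state of $\phi$ on $X$. Theorem~\ref{Thm: B} says $\mu$ is hyperbolic, and $\pi_\ast\hat\mu$ is hyperbolic by (d). Lemma~\ref{Lem: HES}, or the uniqueness statement from \cite[Section 3.5]{BCS22} used in Step 3 of Theorem~\ref{Thm: SPR-BH}, then gives $\pi_\ast\hat\mu=\mu$.

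\emph{Step 2: $p\mid q$.} Since $\hat\mu$ is the equilibrium state of an irreducible shift of period $q$, Theorem~\ref{Thm:countable-pressure-bernoulli} shows that $(\Sigma,\sigma,\hat\mu)$ is isomorphic to a Bernoulli scheme times a cyclic permutation of $q$ points. Concretely, with the spectral decomposition $\Sigma=\Sigma_0\cup\dots\cup\Sigma_{q-1}$, the measures $\hat\mu_i:=q\,\hat\mu|_{\Sigma_i}$ are the ergodic components of $\hat\mu$ for $\sigma^q$, and each $(\sigma^q,\hat\mu_i)$ is mixing, indeed Bernoulli. Then $\pi_\ast\hat\mu_i$ is $f^q$-invariant, $\mu=\frac1q\sum_i\pi_\ast\hat\mu_i$, and each $(f^q,\pi_\ast\hat\mu_i)$ is mixing, since it is a factor of a mixing system. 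Mixing factors are ergodic, so $\mu$ has at most $q$ ergodic components for $f^q$; they are obtained by pushing forward the $\hat\mu_i$, with possible coincidences.

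On the other side, by the definition of the mixing order, $\mu$ has exactly $p$ ergodic components $\nu_0,\dots,\nu_{p-1}$ for $f^p$. These are cyclically permuted by $f$, are mutually singular, and each is $f^p$-mixing. I would use the fact that $p$ is the minimal such period, equivalently that $e^{2\pi i/p}$ generates the eigenvalues of $f$ on $L^2(\mu)$ lying in the group of roots of unity. Every eigenvalue of $(f,\mu)$ pulls back through $\pi$ to an eigenvalue of $(\sigma,\hat\mu)$. The discrete spectrum of the Bernoulli-times-rotation system $(\sigma,\hat\mu)$ is exactly the $q$-th roots of unity. So $e^{2\pi i/p}$ is a $q$-th root of unity, which forces $p\mid q$.

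I expect this step to be the main obstacle. The first thing to pin down is the precise meaning of ``mixing order'': it must be the minimal $p$ given by \cite[Corollary 3.3]{BCS22}, so that $(f,\mu)$ genuinely has the eigenvalue $e^{2\pi i/p}$. Otherwise the divisibility can fail trivially.

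\emph{Step 3: lifting the components.} Now let $\nu$ be an ergodic component of $\mu$ for $f^p$. Each $\pi_\ast\hat\mu_i$ is $f^q$-ergodic, hence also $f^p$-ergodic: an $f^p$-invariant set is $f^q$-invariant because $p\mid q$. Moreover $\mu=\frac1q\sum_i\pi_\ast\hat\mu_i$. By uniqueness of ergodic decomposition, each $\pi_\ast\hat\mu_i$ equals some $\nu_j$, and every $\nu_j$ arises in this way because the $\nu_j$ are mutually singular. Taking $\hat\nu:=\hat\mu_i$ with $\pi_\ast\hat\mu_i=\nu$ completes the Claim.
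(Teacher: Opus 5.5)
Steps 1 and 2 follow the paper's proof. In Step 1 you make explicit the uniqueness of the hyperbolic equilibrium state on $X={\rm BH}(\mu)$ that gives $\pi_\ast\hat\mu=\mu$; the paper leaves this implicit. Step 2 is the paper's index-function argument in spectral form: $e^{2\pi i C/p}$ is an eigenfunction for the eigenvalue $e^{2\pi i/p}$, and the weak mixing of the Bernoulli $\sigma^q$-components plays the role of the paper's $\sigma^{pq}$-ergodicity of $\hat\nu$. Your caveat about minimality of $p$ is well taken. The paper's index function $C$ also presupposes that the $p$ measures $f^j_\ast\nu$ are mutually singular.

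Step 3 has a slip.
\begin{itemize}
\item You only know that $\pi_\ast\hat\mu_i$ is $f^q$-invariant. Since $f^p_\ast\pi_\ast\hat\mu_i=\pi_\ast\hat\mu_{(i+p)\bmod q}$, its $f^p$-invariance is essentially the statement being proved.
\item So ``hence also $f^p$-ergodic'' and the appeal to uniqueness of the $f^p$-ergodic decomposition are not justified.
\item Your remark about $f^p$-invariant sets only shows that each $\pi_\ast\hat\mu_i$ is concentrated on the invariant set carrying a single $\nu_j$. Then $\nu_j$ is an average of the $\pi_\ast\hat\mu_i$ concentrated there. To conclude equality you must know that $\nu_j$ is extremal among $f^q$-invariant measures, and your write-up never uses this.
\end{itemize}

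The fix uses facts you already listed. Since $(f^p,\nu_j)$ is mixing and $q/p\in\NN$, each $\nu_j$ is $f^q$-invariant and $f^q$-ergodic. Then $\frac1q\sum_i\pi_\ast\hat\mu_i$ and $\frac1p\sum_j\nu_j$ are two $f^q$-ergodic decompositions of $\mu$, and uniqueness gives the claim. This is exactly the paper's final step, which it runs with $f^{pq}$ using the Bernoulli property on both sides.
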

    \begin{proof}
     The proof is parallel to \cite[Lemma 12.2, 12.3]{BCS25}.
     By Theorem \ref{Thm:SPRcoding} (b) and (e), we have $P_{\Sigma}(\hat{\phi})=P_{X}(\phi)$. 
     Hence, $P_{\hat{\mu}}(\hat{\phi})=P_{X}(\phi)$ and $\pi_{\ast}(\hat{\mu})=\mu$.
     Since the mixing order of $\mu$ is $p$, there is an index function $C:X\rightarrow \{0,\cdots,p-1\}$ such that $C(f^n(x))=C(x)+(n~{\rm mod}~p)$.
     Let $\hat{C}=C\circ \pi$. 
     Then, $\hat{C} \circ \sigma^q(\bar{x}) = \hat{C}(\bar{x})+(q~{\rm mod}~p)$.
     For every ergodic  component $\hat{\nu}$ of $\hat{\mu}$ with respect to $\sigma^{q}$, since $(\Sigma,\sigma^q,\hat{\nu})$ is Bernoulli by Theorem~\ref{Thm:countable-pressure-bernoulli}, it follows that $\hat{\nu}$ is ergodic for $\sigma^{p\cdot q}$.  
     Moreover, we have $\hat{C} \circ \sigma^{p\cdot q}(\bar{x})=\hat{C}(\bar{x})$, so $\hat{C}(\bar{x})$ is constant $\hat{\nu}$-almost every where. 
     Consequently, for  $\hat{\nu}$-almost every $\bar{x}$ we have $\hat{C}(\bar{x})=\hat{C}(\bar{x})+(q~{\rm mod}~p)$, this implies that $(q~{\rm mod}~p)=0$ and so $(q/p)\in \NN$.
     
     Consider the ergodic decomposition of $\hat{\mu}=\frac{1}{q}\sum_{i=0}^{q-1}\sigma_{\ast}^{i} (\hat{\nu}')$ w.r.t. $\sigma^q$ and the ergodic decomposition of $\mu=\frac{1}{p}\sum_{j=0}^{p-1}f_{\ast}^{j} (\nu)$ w.r.t. $f^p$.
     For every $0 \leq j < p$, since $(X,f^p, f_{\ast}^{j} (\nu))$ is Bernoulli, it follows that $(X,f^p, f_{\ast}^{j} (\nu))$ is ergodic w.r.t. $f^{p\cdot q}$.
     Similarly, $(\Sigma, \sigma^q, \sigma_{\ast}^{i} (\hat{\nu}'))$ is ergodic w.r.t. $\sigma^{p\cdot q}$ for every $0 \leq i < q$.
     Note that $\mu=\frac{1}{p}\sum_{j=0}^{p-1}f_{\ast}^{j} (\nu)$ and $\mu= \frac{1}{q}\sum_{i=0}^{q-1}\pi_{\ast} \circ \sigma_{\ast}^{i} (\hat{\nu}') $ are two ergodic decomposition of $\mu$  w.r.t. $f^{p\cdot q}$.
     Hence, there exists $i\in \{0,\cdots q-1\}$ such that $\hat{\nu}=\pi_{\ast}(\sigma_{\ast}^{i} (\hat{\nu}'))$, where $\hat{\nu}:=\sigma_{\ast}^{i} (\hat{\nu}')$ is an ergodic component of $\hat{\mu}$ w.r.t. $\sigma^q$.
    \end{proof}

    For each $\beta>0$, Lemma \ref{Lem:Lem3.9} provides $C_{\beta}>0$ and $\hat{\beta}>0$ such that for every $\beta$-quasi-H\"{o}der continuous $V:X\to\RR$, there exists a $\hat{\beta}$-H\"{o}der continuous function $\hat{V}:\Sigma\to\RR$ satisfying
    $\hat{V}(\bar{x})= V \circ \pi(\bar{x})$ for every $\bar{x}\in \pi^{-1}(X)$ and $\| \hat{V}\|_{\hat{\beta}}\leq C_{\beta} \cdot |V|_{\beta}$.
    Let $\nu$ be an ergodic component of $\mu$ with respect to $f^{p}$. 
    Then, by Claim, there exists an ergodic component $\hat{\nu}$ of $\hat{\mu}$ with respect to $\sigma^{q}$ such that $\nu=\pi_{\ast}\hat{\nu}$.
    Since $\mu(\pi(\Sigma))=1$, the same holds for $\nu$.
    Therefore, by Theorem \ref{Thm:C-EX}, there exist $\hat{C}_{\hat{\nu}}>0$ and $\hat{\theta}_{\hat{\nu}}\in(0,1)$ such that for every pair of $\beta$-quasi-H\"{o}der continuous functions $V_1,V_2:X\to\RR$ and every $n>0$, one has
    \begin{align*}
    	        &\left| \int V_1(x) \cdot V_2(f^{qn}(x))~{\rm d}\nu(x)- \int V_1~{\rm d}\nu  \cdot \int V_2~{\rm d}\nu \right| \\
    	     =~& \left| \int \hat{V}_1(x) \cdot \hat{V}_2(\sigma^{qn}(x))~{\rm d}\hat{\nu}(x)- \int \hat{V}_1~{\rm d}\hat{\nu}  \cdot \int \hat{V}_2~{\rm d}\hat{\nu} \right| \\
    	     \leq ~& \hat{C}_{\hat{\nu}}\cdot \hat{\theta}_{\hat{\nu}}^n  \cdot \|\hat{V}_1\|_{\hat{\beta}} \cdot \|\hat{V}_2\|_{\hat{\beta}} \\
    	      \leq~& C_{\beta}^2 \cdot \hat{C}_{\hat{\nu}}\cdot \hat{\theta}_{\hat{\nu}}^n \cdot |V_1|_{\beta} \cdot |V_2|_{\beta}.
    \end{align*}
    Then, for each $n>0$, write $pn=qk_n+r_n$, where $0\leq r_n<q$ and $(r_n/p)\in \NN$ (by $(q/p)\in \NN$).
    Note that $\nu$ is $f^p$ ergodic, we have $\int V_2~{\rm d}\nu=\int (V_2\circ f^{r_n})(x)~{\rm d}\nu(x)$ and 
    $$|V_2\circ f^{r_n}(x)|_{\beta}\leq (\sup_{x\in M} \|D_xf^r_n\| )^{\beta} \cdot |V_2|_{\beta}\leq  (\sup_{x\in M} \|D_xf\|)^{q\cdot \beta} \cdot |V_2|_{\beta}.$$    
    Therefore, we have 
    \begin{align*}
    &~\left| \int V_1(x) \cdot V_2(f^{pn}(x))~{\rm d}\nu(x) - \int V_1(x)~{\rm d}\nu(x)  \cdot \int V_2(x)~{\rm d}\nu(x) \right| \\
    \leq &~\left |\int V_1(x) \cdot (V_2\circ f^{r_n})(f^{qk_n}(x))~{\rm d}\nu(x)- \int V_1(x)~{\rm d}\nu(x)  \cdot \int V_2\circ f^{r_n}(x)~{\rm d}\nu(x)  \right|,\\
    \leq &~ C_{\beta}^2 \cdot \hat{C}_{\hat{\nu}}  \cdot \hat{\theta}_{\hat{\nu}}^{(p/q)\cdot n} \cdot |V_1|_{\beta} \cdot |V_2\circ f^{r_n}(x)|_{\beta}\\
    \leq &~ C_{\beta}^2 \cdot \hat{C}_{\hat{\nu}} \cdot (\sup_{x\in M} \|D_xf\|)^{q\beta} \cdot  \hat{\theta}_{\hat{\nu}}^{(p/q)\cdot n} \cdot |V_1|_{\beta} \cdot |V_2|_{\beta}.
    \end{align*}   
    For each $i=1,2$, if  $V_i:M\rightarrow \RR$ is H\"{o}lder continuous, we have $|V_i|_{\beta} \leq \|V_i\|_{\beta}$. 
    Since $\hat{\mu}$ has only finitely many ergodic components, it suffices to take $C$ larger than all $C_{\beta}^2 \cdot \hat{C}_{\hat{\nu}} \cdot (\sup \|D_xf\|)^{q\beta}$ and $\theta$ larger than all $\hat{\theta}_{\hat{\nu}}^{(p/q)}$ to complete the proof of Theorem \ref{Thm: Exponential mixing}.
\end{proof}

\begin{proof}[Proof of Theorem \ref{Thm:T-ET}]
    Let  $f: M\rightarrow M$ be a topologically mixing $C^{\infty}$ surface diffeomorphism with positive entropy.  
	Assume that $\phi: M \rightarrow \mathbb{R}$ is H\"{o}lder continuous with ${\rm Var}(\phi)< h_{\rm top}(f)$.
	Since $f:M \to M$ is topologically mixing, by \cite[Theorem 6.12]{BCS22} there exists at most one Borel homoclinic class that can support an ergodic measure with positive entropy, and the period of this homoclinic class is $1$.
	By Lemma \ref{Lem: EH}, there exists $\delta>0$ such that every ergodic equilibrium state $\mu$ of $\phi$ satisfies $h_{\mu}(f)>\delta$.
    Then, by Lemma \ref{Lem: HES}, $f$ admits a unique equilibrium state $\mu$ of $\phi$. 
    Moreover, by \cite[Corollary 3.3]{BCS22}, the mixing order of this equilibrium state is $1$.
	Together with Theorem \ref{Thm: A} and Theorem \ref{Thm: Exponential mixing}, we complete the proof of Theorem \ref{Thm:T-ET}.
\end{proof}

\subsubsection{Effective intrinsic ergodicity} \label{SEC:EIE}
     R\"{u}hr-Sarig \cite[Theorem 7.1 and Section 9]{Sar22} proved effective intrinsic ergodicity for SPR Markov shifts.
\begin{Theorem}\label{Thm:C-EIE}
	 Let $\Sigma$ be an irreducible Markov shift of period $p\geq 1$ and $\phi: \Sigma \rightarrow \RR$ be a $\hat{\alpha}$-H\"{o}lder continuous function. 
	 Suppose that $P_{\Sigma}(\phi)<\infty$ and $\Sigma$ is SPR for $\phi$.  
	 Let $\hat{\mu}$ be the unique equilibrium state of $\phi$.
	 Then, for each $\hat{\beta}\in (\hat{\alpha},1)$, there are $\hat{C}:=\hat{C}(\hat{\alpha},\hat{\beta})>0$ such that for every $\hat{\beta}$-H\"{o}lder continuous functions $\hat{V}:\Sigma\rightarrow \mathbb{R}$ and every $\sigma$-invariant measure $\hat{\nu}$ on $\Sigma$, one  has
	 $$\left| \int \hat{V}(x)~{\rm d}\hat{\nu}(x)-\int \hat{V}(x)~{\rm d}\hat{\mu}(x) \right| \leq \hat{C} \cdot  \|\hat{V}\|_{\hat{\beta}} \cdot  \sqrt{P_{\Sigma}(\phi)-P_{\nu}(\phi)}.$$
\end{Theorem}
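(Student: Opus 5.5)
The plan is the classical \emph{pressure-convexity} argument. An SPR shift has a spectral gap, so the pressure $t\mapsto P_\Sigma(\phi+t\hat V)$ is real-analytic near $t=0$, with second derivative bounded uniformly in $\hat V$ once $\|\hat V\|_{\hat\beta}$ is normalized. This quadratic upper bound is compared with the linear lower bound coming from the variational principle, and the free parameter $t$ is then optimized. By the spectral decomposition lemma we may pass to $\sigma^p$ on one cyclic piece (which changes constants only by factors depending on $p$), so assume $\Sigma$ is topologically mixing. Write $D:=P_\Sigma(\phi)-P_{\hat\nu}(\phi)\ge 0$ and normalize $\|\hat V\|_{\hat\beta}=1$.

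\textbf{Step 1 (lower bound).} For every $t\in\RR$, the variational principle gives
\begin{equation*}
P_\Sigma(\phi+t\hat V)\ \ge\ h_{\hat\nu}(\sigma)+\int\phi\,{\rm d}\hat\nu+t\int\hat V\,{\rm d}\hat\nu \ =\ P_\Sigma(\phi)-D+t\int \hat V\,{\rm d}\hat\nu .
\end{equation*}

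\textbf{Step 2 (upper bound, the main obstacle).} I would aim to show that there exist $t_0>0$ and $K>0$, depending only on $\phi,\hat\alpha,\hat\beta$, such that
\begin{equation*}
P_\Sigma(\phi+t\hat V)\ \le\ P_\Sigma(\phi)+t\int\hat V\,{\rm d}\hat\mu+Kt^2 \quad \text{for all } |t|\le t_0 .
\end{equation*}
To get this, first replace $\phi$ and $\hat V$ by cohomologous one-sided functions via the Sinai--Bowen construction. This halves the H\"older exponent, which is where the constraint $\hat\beta>\hat\alpha$ and the choice of a common exponent enter. Next, use the Cyr--Sarig characterization: SPR for $\phi$ is equivalent to the Ruelle operator $L_\phi$ having a spectral gap on a suitable Banach space of locally H\"older functions. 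The perturbation $t\mapsto L_{\phi+t\hat V}$ is analytic in operator norm, with $\|L_{\phi+t\hat V}-L_\phi\|\le C|t|\,\|\hat V\|_{\hat\beta}$. Therefore, for $|t|\le t_0$ with $t_0$ independent of $\hat V$ in the unit ball, the leading simple eigenvalue $\lambda(t)=e^{P_\Sigma(\phi+t\hat V)}$ persists and is analytic. The SPR property itself also persists, since ${\rm Var}(t\hat V)<\Delta(\phi)$ for small $t$, as in Remark \ref{Rem:SPRpro}. Standard perturbation formulas give $\frac{d}{dt}P|_{t=0}=\int\hat V\,{\rm d}\hat\mu$. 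Cauchy estimates on a fixed complex disc, on which the gap persists uniformly, bound $|P''(t)|\le 2K$ uniformly for $|t|\le t_0$ and $\|\hat V\|_{\hat\beta}\le1$. Taylor's theorem then gives the displayed upper bound. The delicate part is exactly this uniformity of the gap over the unit ball of $\hat V$ on a non-compact shift. I would first try the explicit Banach space and the perturbation estimates of Cyr--Sarig.

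\textbf{Step 3 (optimization).} Combining Steps 1 and 2 gives
\begin{equation*}
t\Big(\int\hat V\,{\rm d}\hat\nu-\int\hat V\,{\rm d}\hat\mu\Big)\le D+Kt^2 .
\end{equation*}
If $D\le Kt_0^2$, take $t=\pm\sqrt{D/K}$, with the sign chosen to match the sign of the difference. This yields $\big|\int\hat V\,{\rm d}\hat\nu-\int\hat V\,{\rm d}\hat\mu\big|\le 2\sqrt K\sqrt D$. If $D>Kt_0^2$, the trivial bound $2\|\hat V\|_\infty\le 2\le \frac{2}{t_0\sqrt K}\sqrt D$ suffices. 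Undoing the normalization gives the claim with $\hat C=\max\{2\sqrt K,\,2/(t_0\sqrt K)\}$, which depends only on $\hat\alpha,\hat\beta$ and $\phi$.
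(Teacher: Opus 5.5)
The paper does not prove this statement. It quotes it from R\"uhr--Sarig (Theorem~7.1 and Section~9 of their work on effective intrinsic ergodicity), so what you are really doing is reconstructing the cited argument. Your outline is the standard pressure-convexity mechanism behind that result, and it is sound. It has three parts: the variational lower bound $P_\Sigma(\phi+t\hat V)\ge P_\Sigma(\phi)-D+t\int\hat V\,d\hat\nu$ (which needs no ergodicity of $\hat\nu$); a quadratic upper bound $P_\Sigma(\phi+t\hat V)\le P_\Sigma(\phi)+t\int\hat V\,d\hat\mu+Kt^2$ on a window $|t|\le t_0$ independent of $\hat V$ in the unit ball; and the choice $t=\pm\sqrt{D/K}$. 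This also shows why the exponent is $\tfrac12$: the pressure function is quadratically tangent to its tangent line. Steps 1 and 3 are complete, except that for $D=0$ you should let $t\to 0$, or invoke uniqueness of $\hat\mu$, rather than plug in $t=0$.

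All the real content is in Step 2, which is exactly what the citation supplies. To turn it into a proof you must verify two things.
\begin{enumerate}
\item The map $f\mapsto (e^{z\hat V}-1)f$ is bounded on the Cyr--Sarig space with norm $O(|z|\,\|\hat V\|_{\hat\beta})$. Check this for the specific norm. For weighted locally H\"older norms it follows cylinder by cylinder from $[gf]\le \|g\|_\infty[f]+[g]\,\|f\|_\infty$. This is what makes the Kato perturbation radius and the Cauchy-estimate disc independent of $\hat V$.
\item For real $t$, the continued isolated eigenvalue equals $e^{P_\Sigma(\phi+t\hat V)}$. One way is to evaluate $L^n_{\phi+t\hat V}1_{[a]}$ at a point of $[a]$, where its growth rate is the Gurevich pressure, and use that the perturbed eigenprojection of $1_{[a]}$ stays nonzero there for $|t|\le t_0$.
\end{enumerate}

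Two small corrections. First, the hypothesis $\hat\beta>\hat\alpha$ does not come from the Sinai--Bowen step: both exponents are halved, so their order is preserved. It is what makes $\hat V$ a bounded multiplier on a space built at the potential's exponent. Second, the period-$p$ reduction should use $S_p\phi$ and $\frac1p S_p\hat V$ on $\Sigma_0$, with $\hat\nu_0=p\,\hat\nu|_{\Sigma_0}$. By Abramov's formula the pressure deficit becomes $pD$, so, as you say, only the constants change.
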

    Now we give the proof of effective intrinsic ergodicity on an SPR Borel homoclinic class.
\begin{Theorem}\label{Thm: EIE-BH}
	Assume that $f$ is $\chi$-SPR for a H\"{o}lder continuous function $\phi$ on a Borel homoclinic class $X$. 
	Let $\mu$ be the unique equilibrium state on $X$.
	For every $\beta\in (0,1)$, there is $C>0$ such that for every $\beta$-quasi-H\"{o}lder continuous function $V:M\rightarrow \mathbb{R}$ and every invariant measure $\nu$ with $\nu(X)=1$,
	$$\left| \int V (x) ~{\rm d}\mu(x)- \int V (x)~{\rm d}\nu(x)  \right|\leq C \cdot |V|_{\beta} \cdot \sqrt{P_{\mu}(\phi)-P_{\nu}(\phi) }.$$
	Moreover, we also have  $\left| \lambda^{+}(\mu)-\lambda^{+}(\nu) \right|\leq C\sqrt{P_{\mu}(\phi)-P_{\nu}(\phi)}$.
\end{Theorem}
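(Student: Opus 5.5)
The strategy is to transfer Theorem~\ref{Thm:C-EIE} from the SPR coding of Theorem~\ref{Thm:SPRcoding} down to $X$, following the pattern of the proof of Theorem~\ref{Thm: Exponential mixing}. Fix $\chi_0<\chi$ and let $\pi:\Sigma\to M$ be the irreducible coding of $X$ given by Theorem~\ref{Thm:SPRcoding}. By item (e), $\Sigma$ is SPR for $\hat\phi=\phi\circ\pi$ and $P_\Sigma(\hat\phi)=P_X(\phi)$. Moreover, there is $P_0<P_X(\phi)$ such that every ergodic $\nu$ on $X$ with $P_\nu(\phi)>P_0$ satisfies $\nu(\pi(\Sigma^\#))=1$. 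As in the Claim of Section~\ref{SEC:EM}, the unique equilibrium state $\hat\mu$ of $\hat\phi$ projects to $\mu$.

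Next we lift observables and measures.
\begin{itemize}
\item Given a $\beta$-quasi-H\"older $V$, Lemma~\ref{Lem:Lem3.9} produces a $\hat\beta$-H\"older $\hat V$ with $\hat V=V\circ\pi$ on $\pi^{-1}(X)$ and $\|\hat V\|_{\hat\beta}\le C_\beta|V|_\beta$. If needed, we shrink $\hat\beta$ below the H\"older exponent of $\hat\phi$, taking the exponent $\hat\alpha$ of Theorem~\ref{Thm:C-EIE} smaller than $\hat\beta$.
\item Given an invariant $\nu$ with $\nu(X)=1$, we split into two cases. If $P_\nu(\phi)\le P_0$, then $P_\mu(\phi)-P_\nu(\phi)\ge P_X(\phi)-P_0>0$. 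Since $|\int V\,d\mu-\int V\,d\nu|\le 2|V|_\beta$ (quasi-H\"older functions are bounded by the $\Phi$ sup-norm), taking $C\ge 2/\sqrt{P_X(\phi)-P_0}$ handles this case. If $P_\nu(\phi)>P_0$, we want a lift $\hat\nu$ with $\pi_*\hat\nu=\nu$ and $h_{\hat\nu}(\sigma)=h_\nu(f)$.
\end{itemize}

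For nonergodic $\nu$ in the second case, write $\nu=\int \nu_x\,d\nu$. The low-pressure components can be absorbed by a convexity argument. Set $a=\nu\{P_{\nu_x}\le P_0\}$. Then $P_\mu-P_\nu\ge a(P_X-P_0)$, so the contribution of those components to the integral difference is at most $2a|V|_\beta\le 2|V|_\beta\sqrt{a}\le C|V|_\beta\sqrt{P_\mu-P_\nu}$ (using $a\le\sqrt a$ and the bound $C\ge 2/\sqrt{P_X-P_0}$). The high-pressure components $\nu_x$ are ergodic with $\nu_x(\pi(\Sigma^\#))=1$. Each lifts, by the standard finite-to-one lifting on $\Sigma^\#$, to an ergodic $\hat\nu_x$ with equal entropy, and hence $P_{\hat\nu_x}(\hat\phi)=P_{\nu_x}(\phi)$. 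We then apply Theorem~\ref{Thm:C-EIE} and use $\int\hat V\,d\hat\nu_x=\int V\,d\nu_x$.

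It remains to combine. Concavity of the square root (Jensen) gives
\[
\int\sqrt{P_X-P_{\nu_x}}\,d\nu\le\sqrt{P_X-P_\nu}.
\]
This requires measurability of $x\mapsto P_{\nu_x}$ and the affinity $P_\nu=\int P_{\nu_x}\,d\nu$, both of which are standard.

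For the Lyapunov exponent statement, we apply the above with $V=J^u$. By \cite[Section 11.1]{BCS25} this function is $\alpha$-quasi-H\"older, and $\lambda^+(\nu)=\int J^u\,d\nu$ holds for measures on $X$, since they are hyperbolic. Here one must take $\beta\le\alpha$, or restate the bound with the constant for $\beta=\alpha$.

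The main obstacle I expect is the lifting step. One must verify that a lift with full entropy exists and that the identity $\int\hat V\,d\hat\nu=\int V\,d\nu$ holds, that is, that $\hat\nu(\pi^{-1}X)=1$. The first should come from finite-to-one-ness on $\Sigma^\#$ (Abramov–Rokhlin: finite-to-one factors preserve entropy). The second holds because $\nu(X)=1$ and $\hat\nu$ projects to $\nu$.
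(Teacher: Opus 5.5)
Your proposal is correct and follows essentially the same route as the paper. Both pass to the SPR coding of Theorem~\ref{Thm:SPRcoding}, lift observables via Lemma~\ref{Lem:Lem3.9} and high-pressure ergodic measures to $\Sigma$, apply Theorem~\ref{Thm:C-EIE}, use the trivial bound $2|V|_\beta$ below $P_0$, handle non-ergodic $\nu$ through the ergodic decomposition and concavity of the square root, and take $V=J^u$ for the exponents. The differences are cosmetic: you obtain the lift from $\nu(\pi(\Sigma^{\#}))=1$ and finite-to-one-ness rather than citing item (c) (entropy equality also follows directly from item (b)), and you write out the non-ergodic step that the paper only sketches.
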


\begin{proof}
			Assume that $\phi: M\rightarrow \RR$ is $\alpha_0$-H\"{o}lder continuous and $f$ is $\chi$-SPR for $\phi$ on $X$.
			Fix $\chi_0<\chi$. 
			By Theorem \ref{Thm:SPRcoding}, there exists a coding map $\pi: \Sigma \rightarrow X$, where $\Sigma$ is an irreducible Markov shift of period $p$.
			Then, there exists $\hat{\alpha}_1\in (0,1)$ such that $\hat{\phi}=\phi \circ \pi$ is $\hat{\alpha}_1$-quasi-H\"{o}der continuous.			
			For each $\beta>0$, Lemma \ref{Lem:Lem3.9} provides $C_{\beta}\geq 1$ and $\hat{\beta}>0$ such that for every $\beta$-quasi-H\"{o}der continuous $V:X\to\RR$, there exists a $\hat{\beta}$-H\"{o}der continuous function $\hat{V}:\Sigma\to\RR$ satisfying
			$\hat{V}(\bar{x})= V \circ \pi(\bar{x})$ for every $\bar{x}\in \pi^{-1}(X)$ and $\| \hat{V}\|_{\hat{\beta}}\leq C_{\beta}|V|_{\beta}$.
			Take $0<\hat{\alpha}<\min\{\hat{\alpha}_1,\hat{\beta}\}$, then $\hat{\phi}=\phi \circ \pi$ is also $\hat{\alpha}$-quasi-H\"{o}der continuous.
			
			By Theorem \ref{Thm:SPRcoding} (e) and (c), there is $P_0<P_{X}(\phi)$ such that for every ergodic measure $\nu$ with $\nu(X)=1$ and $P_{\nu}(\phi)>P_0$, there exists an ergodic $\sigma$-invariant measure $\hat{\nu}$ on $\Sigma$ such that $\pi_{\ast}(\hat{\nu})=\nu$.
			Therefore, by Theorem \ref{Thm:C-EIE}, there exists $\hat{C}=\hat{C}(\hat{\alpha},\hat{\beta})>0$ such that for every $\beta$-quasi-H\"{o}der continuous $V:X\to\RR$
			\begin{align*}
			 	\left| \int {V}~{\rm d}{\nu}-\int {V}~{\rm d}{\mu} \right|&=\left| \int \hat{V}~{\rm d}\hat{\nu}-\int \hat{V}~{\rm d}\hat{\mu} \right| \\
			 	 &\leq \hat{C} \cdot  \|\hat{V}\|_{\hat{\beta}} \cdot  \sqrt{P_{\Sigma}(\hat{\phi})-P_{\hat{\nu}}(\hat{\phi}})\\
			 	 &\leq \hat{C} \cdot C_{\beta} \cdot  |V|_{\beta} \cdot  \sqrt{P_{\mu}(\phi)-P_{\nu}(\phi)}.
			\end{align*}
			If $P_{\nu}(\phi)\leq P_0$, then we have
			 $$\left| \int {V}~{\rm d}{\nu}-\int {V}~{\rm d}{\mu} \right|\leq 2|V|_{\beta}\leq 2(P(\phi)-P_0)^{-1/2} \cdot |V|_{\beta} \cdot  \sqrt{P_{\mu}(\phi)-P_{\nu}(\phi)}.$$
		    Let $C:=\max\{\hat{C}\cdot C_{\beta},  2(P(\phi)-P_0)^{-1/2}\}$, then for every ergodic measure $\nu$ with $\nu(X)=1$, we have 
			 $$\left| \int {V}~{\rm d}{\nu}-\int {V}~{\rm d}{\mu} \right| \leq  C \cdot  |V|_{\beta} \cdot  \sqrt{P_{\mu}(\phi)-P_{\nu}(\phi)}.$$
			If $\nu$ is non-ergodic, then by the ergodic decomposition the above inequality also holds.
			Since $J^u$ is $\alpha$-quasi holder continuous, we may take $C$ large enough such that 
			 $\left| \lambda^{+}(\mu)-\lambda^{+}(\nu) \right|\leq C\sqrt{P_{\mu}(\phi)-P_{\nu}(\phi) }$ 
			for every invariant measure on $X$. 
			This completes the proof.
			 \end{proof}
             
             \begin{proof}[Proof of Theorem \ref{Thm: Effective intrinsic ergodicity}]
             	Assume that $\phi: M\rightarrow \RR$ is H\"{o}lder continuous and $f$ is $\chi$-SPR for $\phi$ on $M$.
             	\begin{Claim}
             		For every sequence of ergodic measures $\{\mu_n\}$ with $P_{\mu_n}(\phi) \rightarrow P(\phi)$, there exists $N>0$ such that for every $n>N$, there is an ergodic equilibrium state $\nu_n$ of $\phi$ such that $\mu_n \overset{h}{\sim} \nu_n$. 
             	\end{Claim}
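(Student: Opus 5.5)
We argue by contradiction and reuse the local-product argument from Step 2 of the proof of Theorem \ref{Thm: SPR-BH} and from the proof of Theorem \ref{Thm: B}.

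Suppose the Claim fails. Then, after passing to a subsequence, for every $n$ the ergodic measure $\mu_n$ is not homoclinically related to any ergodic equilibrium state of $\phi$. Since $P_{\mu_n}(\phi)\to P(\phi)$ and $f$ is $\chi$-SPR, fix $\varepsilon\ll\chi$, $\tau\in(0,1)$, and take $\ell$ and $N$ with $\mu_n({\rm PES}_{\ell}^{\chi,\varepsilon}(f))>\tau$ for $n>N$. In particular each such $\mu_n$ is hyperbolic, since it gives positive mass to a Pesin set. By compactness we may assume $\mu_n\to\mu$. Theorem \ref{Thm: B} then gives that $\mu$ is an ergodic equilibrium state of $\phi$ whose exponents lie outside $(-\chi,\chi)$. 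Because ${\rm PES}_{\ell}^{\chi,\varepsilon}(f)$ is compact, we also get $\mu({\rm PES}_{\ell}^{\chi,\varepsilon}(f))\ge\tau>0$.

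Next choose $\delta_\ell>0$ as in the proof of Theorem \ref{Thm: B}: any two points of ${\rm PES}_{\ell}^{\chi,\varepsilon}(f)$ at distance at most $2\delta_\ell$ have transversely intersecting local stable and unstable manifolds in both directions. Cover ${\rm PES}_{\ell}^{\chi,\varepsilon}(f)$ by finitely many balls $B(z_1,\delta_\ell),\dots,B(z_k,\delta_\ell)$ with centers $z_i$ in the Pesin set. For each $n>N$ at least one ball $B(z_{i_n},\delta_\ell)\cap{\rm PES}_{\ell}^{\chi,\varepsilon}(f)$ has positive $\mu_n$-measure, and it has $\mu_n$-measure at least $\tau/k$. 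Pass to a further subsequence so that $i_n=i$ is constant.

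The main obstacle is to transfer positivity of measure on this piece to the limit $\mu$. The set $\overline{B(z_i,\delta_\ell)}\cap{\rm PES}_{\ell}^{\chi,\varepsilon}(f)$ is closed, so by upper semicontinuity on closed sets its $\mu$-measure is at least $\limsup\mu_n\ge\tau/k>0$. Positivity for $\mu_n$ on $B(z_i,\delta_\ell)$ combined with positivity for $\mu$ on the closed ball $\overline{B(z_i,\delta_\ell)}$ still gives pairs of points within distance $2\delta_\ell$, because the centers coincide. Hence the transversality conclusion applies to both pairs $(y_1,y_2)$ and $(y_2,y_1)$. This yields $\mu_n\preceq\mu$ and $\mu\preceq\mu_n$, that is, $\mu_n\overset{h}{\sim}\mu$ for every $n$ in the subsequence, which contradicts our assumption.

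It remains to state the conclusion for the full sequence. Set $\nu_n:=$ the equilibrium state homoclinically related to $\mu_n$. This is well defined because Lemma \ref{Lem: HES} forces such a state to be unique. Alternatively, the finiteness in Theorem \ref{Thm: B} together with the above argument shows that every subsequence has a further subsequence where the relation holds, and this gives the required $N$ for the whole sequence.
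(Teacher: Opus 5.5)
Your proof is correct. It reaches the homoclinic relation by a slightly different route from the paper's.

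The paper first uses the covering argument from the proof of Theorem~\ref{Thm: B}: there are only finitely many Borel homoclinic classes among ergodic measures with $P_{\mu}(\phi)>P_0$. This lets it extract a subsequence lying in a single class $X$ and converging to $\mu$. It then applies Theorem~\ref{Thm: SPR-BH} on $X$, which is legitimate since $P_X(\phi)=P(\phi)$ here. That gives that $\mu$ is an ergodic equilibrium state with $\mu(X)=1$, so $\mu\overset{h}{\sim}\mu_{n_k}$ follows from Lemma~\ref{Lem:BTHC}(1).

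You instead take ergodicity of the limit from Theorem~\ref{Thm: B}, which the paper itself derives from Theorem~\ref{Thm: SPR-BH}, so the underlying ingredients are the same. You then prove $\mu_n\overset{h}{\sim}\mu$ by hand. You locate one Pesin ball charged with mass at least $\tau/k$ by every $\mu_n$ along a subsequence, and transfer positivity to $\mu$ via the Portmanteau inequality on the closed set $\overline{B(z_i,\delta_\ell)}\cap{\rm PES}_{\ell}^{\chi,\varepsilon}(f)$. This is the local-product mechanism of Step~2 in the proof of Theorem~\ref{Thm: SPR-BH}. It is simpler here because $\mu$ is already known to be ergodic and hyperbolic, so no ergodic decomposition of the limit is needed.

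The paper's version is shorter once Theorem~\ref{Thm: SPR-BH} is available, and it also shows that the whole subsequence lies in ${\rm BH}(\mu)$. Yours bypasses the grouping into a single Borel homoclinic class.

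Your final paragraph is superfluous. You negated the claim for the full sequence at the outset, so the contradiction already yields the required $N$. Uniqueness of $\nu_n$ via Lemma~\ref{Lem: HES} is not part of the statement.
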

             	\begin{proof}[Proof of Claim]
             	Suppose for contradiction that there exists a sequence of ergodic measures $\{\mu_n\}$ with 
             	$P_{\mu_n}(\phi) \rightarrow P(\phi)$ such that $\mu_n$ not homoclinic related with any ergodic equilibrium state of $\phi$ for every $n>0$.
                By the argument in the proof of Theorem \ref{Thm: B}, there exists a subsequence $\{\mu_{n_k}\}_{k>0}$ such that $\mu_{n_{k_1}}\overset{h}{\sim} \mu_{n_{k_2}}$ for every $k_1,k_2>0$, and $\mu_{n_k} \to \mu$ as $k\to\infty$.
             	Therefore, by Theorem \ref{Thm: SPR-BH}, we have $\mu$ is an ergodic equilibrium state of $\phi$ and $\mu \overset{h}{\sim}  \mu_{n_{k}}$ for every $k>0$. 
             	This is a contradiction.
             	\end{proof}
             	
             Let $\{\mu_1,\cdots,\mu_N\}$ be the ser of all ergodic  equilibrium states of $\phi$.
             By the claim, there exists $P_1\in (0,P(\phi))$ such that for every ergodic measure $\nu$ with $P_{\nu}(\phi)>P_1$, there exists an ergodic equilibrium state $\mu_j$ such that $\mu_j \overset{h}{\sim}  \nu$, and so $\nu({\rm BH}(\mu_j))=1$. 
             By Theorem \ref{Thm: EIE-BH}, for every $\beta\in (0,1)$, there exists $C_j>0$ such that for every $\beta$-quasi H\"{o}lder continuous function $V:M\rightarrow \RR$, one has
             	$$\left| \int V  ~{\rm d}\mu_j- \int V ~{\rm d}\nu  \right|\leq C_j \cdot |V|_{\beta} \cdot \sqrt{P_{\mu_j}(\phi)-P_{\nu}(\phi) }.$$
             Take $C'=\max \{C_1,\cdots,C_N\}$, Theorem \ref{Thm: Effective intrinsic ergodicity} holds for every ergodic measure $\nu$ with $P_{\nu}(\phi)>P_1$.
             For every ergodic measure $\nu$ with $P_{\nu}(\phi)\leq P_1$, fix an ergodic equilibrium state $\mu_0\in \{\mu_1,\cdots,\mu_N \}$. Then, 
             $$\left| \int V  ~{\rm d}\mu_0- \int V ~{\rm d}\nu  \right|\leq 2|V|_{\beta}\leq 2(P(\phi)-P_1)^{-1/2} \cdot |V|_{\beta} \cdot  \sqrt{P_{\mu_0}(\phi)-P_{\nu}(\phi)}.$$
             Take $C>\max\{C',  2(P(\phi)-P_1)^{-1/2}\}$, then Theorem \ref{Thm: Effective intrinsic ergodicity} holds for every ergodic measure $\nu$.
             
             Now assume that $\nu$ is an invariant measure (not necessary ergodic), and consider its ergodic decomposition $\nu=\int \nu_x ~{\rm d}\nu(x)$. 
             Set $E_0:=\{x: P_{\nu_x}(\phi)\leq P_1 \} $ and $E_j:=\{x:P_{\nu_x}(\phi)>P_1,~\nu_x\overset{h}{\sim}\mu_j \}$, $1\leq j \leq N$.
             Let $\mu= \sum_{j=0}^{N} \nu(E_j) \cdot \mu_j$. 
             Then,  $\mu$ is an equilibrium state of $\phi$. 
             Moreover, for every $\beta$-quasi-H\"{o}der continuous function $V:X\to\RR$, we have
             \begin{align*}
             	\left| \int V ~{\rm d}\mu- \int V~{\rm d}\nu  \right|&\leq  \sum_{j=0}^{N} \int  \chi_{E_j}(x)\cdot \left| \int V  ~{\rm d}\nu_x-\int V ~{\rm d}\mu_j \right| ~{\rm d}\nu(x) \\
             	&\leq C\cdot |V|_{\beta} \cdot \Big(\int \sum_{j=0}^{N} \sqrt{P_{\mu_j}(\phi)-P_{\nu_x}(\phi)} \cdot \chi_{E_j}(x) ~{\rm d}\nu(x) \Big)\\
             	&\leq C\cdot |V|_{\beta} \cdot   \Big(\int \sum_{j=0}^{N}\big(P_{\mu_j}(\phi)-P_{\nu_x}(\phi) \big)\cdot \chi_{E_j}(x) ~{\rm d}\nu(x) \Big)^{1/2}\\
             	&\leq C\cdot |V|_{\beta} \cdot  \sqrt{P_{\mu}(\phi)-P_{\nu}(\phi)}.             	
             \end{align*}
             If $V:M\rightarrow \RR$ is $\beta$-H\"{o}lder continuous, then we have $|(V |_X)|_{\beta}\leq \|V\|_{\beta}$.  
             This completes the proof of Theorem \ref{Thm: Effective intrinsic ergodicity}.
             \end{proof}
\begin{proof}[Proof of Corollary \ref{Cor:EIE}]
    By Theorem \ref{Thm: Effective intrinsic ergodicity}, it suffices to show that there exists a unique equilibrium state of $\phi$.
    Since $f:M \to M$ is topologically transitive, by \cite[Theorem 6.12]{BCS22}, there exists at most one Borel homoclinic class that can support an ergodic measure with positive entropy.
    By Lemma \ref{Lem: EH}, there exists $\delta>0$ such that every ergodic equilibrium state of $\phi$ has metric entropy greater than $\delta$.
    Therefore, by Theorem \ref{Thm:FiniteBH}, the ergodic equilibrium state is unique.
\end{proof}

 \section*{Acknowledgments}
We would like to express our sincere gratitude to J. Buzzi and O. Sarig for sharing valuable insights with us and for offering new perspectives on our work.
We also thank the Tianyuan Mathematical Center in Northeast China for its support.

\flushleft{\bf Chiyi Luo}  \\
\small School of Mathematics and Statistics, Jiangxi Normal University, 330022 Nanchang, P. R. China.\\
\textit{E-mail:} \texttt{luochiyi98@gmail.com}\\

\flushleft{\bf Dawei Yang}  \\
\small School of Mathematical Sciences,  Soochow University, 215006 Suzhou, P. R. China.\\
\textit{E-mail:} \texttt{yangdw@suda.edu.cn}\\

\end{document}